%--------------------------------
%\usepackage[notcite,notref]{showkeys}

\documentclass[12pt,leqno]{article}
%%%%%%%%%%%%%%%%%%%%%%%%%%%%%%%%%%%%%%%%%%%%%%%%%%%%%%%%%%%%%%%%%%%%%%%%%%%%%%%%%%%%%%%%%%%%%%%%%%%%%%%%%%%%%%%%%%%%%%%%%%%%%%%%%%%%%%%%%%%%%%%%%%%%%%%%%%%%%%%%%%%%%%%%%%%%%%%%%%%%%%%%%%%%%%%%%%%%%%%%%%%%%%%%%%%%%%%%%%%%%%%%%%%%%%%%%%%%%%%%%%%%%%%%%%%%
\usepackage{amsmath}
\usepackage{amsfonts}
\usepackage{amssymb}

\setcounter{MaxMatrixCols}{10}
%TCIDATA{OutputFilter=Latex.dll}
%TCIDATA{Version=5.50.0.2960}
%TCIDATA{<META NAME="SaveForMode" CONTENT="1">}
%TCIDATA{BibliographyScheme=Manual}
%TCIDATA{LastRevised=Thursday, November 23, 2017 13:13:02}
%TCIDATA{<META NAME="GraphicsSave" CONTENT="32">}
%TCIDATA{Language=American English}

\textheight24cm \textwidth15.5cm \oddsidemargin1cm
\evensidemargin1cm \topmargin-1cm
\parindent0cm
\def\thebibliograph#1#2{\section*{{\normalsize \bf #2}}\list
   {[\arabic{enumi}]}{\settowidth\labelwidth{[#1]}\leftmargin\labelwidth
     \advance\leftmargin\labelsep
     \usecounter{enumi}}
     \def\newblock{\hskip .11em plus .33em minus -.07em}
     \sloppy
     \sfcode`\.=1000\relax}

\newtheorem{thm}{Theorem}

\newtheorem{defn}{Definition}

\newtheorem{lem}{Lemma}
\newtheorem{rem}{Remark}

\input{tcilatex}
\begin{document}

\title{Variable Besov spaces: continuous version}
\author{Douadi Drihem}
\date{\today }
\maketitle

\begin{abstract}
We introduce Besov spaces with variable smoothness and integrability by
using the continuous version of Calder\'{o}n reproducing formula. We show
that our space is well-defined, i.e., independent of the choice of basis
functions. We characterize these function spaces by so-called Peetre maximal
functions and we obtain the Sobolev embeddings for these function spaces. We
use these results to prove the atomic decomposition for these spaces.\vskip%
5pt

\textit{MSC 2010\/}: 46E35, 46E30.

\textit{Key Words and Phrases}: Atom, embeddings, Besov space, variable
exponent.
\end{abstract}

\section{Introduction}

Function spaces play an important role in harmonic analysis, in the theory
of differential equations and in almost\ every\ other\ field of applied
mathematics. Some of these function spaces are Besov spaces. The theory of
these spaces has been developed in detail in \cite{T1} and \cite{T2} (and
continued and extended in the more recent monographs \cite{T3} and \cite{T31}%
), but has a longer history already including many contributors; we do not
want to discuss this here. For general literature on function spaces we
refer to [1, 5, 19, 29, 30-31, 39] and references therein.

Based on continuous characterizations of Besov spaces, we introduce a new
family of function spaces of variable smoothness and integrability. These
type of function spaces, initially appeared in the paper of A. Almeida and
P. H\"{a}st\"{o} \cite{AH},\ where several basic properties were shown, such
as the Fourier analytical characterization, Sobolev embeddings and the
characterization in terms of Nikolskij representations involving sequences
of entire analytic functions. Later, the present author characterized these
spaces by local means and established the atomic characterization (see \cite%
{D3}). After that, Kempka and Vyb\'{\i}ral \cite{KV122} characterized these
spaces by ball means of differences and also by local means, see \cite{N14}
and \cite{IN14} for the duality of these function spaces.\newline

This paper is organized as follows. First we give some preliminaries, where
we fix some notations and recall some basic facts on function spaces with
variable integrability\ and we give some key technical lemmas needed in the
proofs of the main statements. We then define the Besov spaces $\boldsymbol{B%
}_{p(\cdot ),q(\cdot )}^{\alpha (\cdot )}$. We prove a useful
characterization of these spaces based on the so called local means. The
theorem on local means that is proved for Besov spaces of variable
smoothness and integrability is highly technical and its proof is required
new techniques and ideas. Using the results from Sections 3 and 4, we prove
in Section 5 the atomic decomposition for $\boldsymbol{B}_{p(\cdot ),q(\cdot
)}^{\alpha (\cdot )}$.

\section{Preliminaries}

As usual, we denote by $\mathbb{N}_0$ the set of all non-negative integers.
The notation $f\lesssim g$ means that $f\leq c\,g$ for some independent
constant $c$ (and non-negative functions $f$ and $g$), and $f\approx g$
means that $f\lesssim g\lesssim f$. For $x\in \mathbb{R}$, $[x]$ stands for
the largest integer smaller than or equal to $x $.\vskip5pt

If $E\subset {\mathbb{R}^{n}}$ is a measurable set, then $|E|$ stands for
the Lebesgue measure of $E$ and $\chi _{E}$ denotes its characteristic
function. %\vskip5pt
We denote by $\mathcal{S}(\mathbb{R}^{n})$ the space of all rapid decreasing
functions on $\mathbb{R}^{n}$, and by $\mathcal{S}^{\prime }(\mathbb{R}^{n})$
the dual space of $\mathcal{S}(\mathbb{R}^{n})$. The Fourier transform of $%
f\in \mathcal{S}(\mathbb{R}^{n})$ is denoted by $\mathcal{F}f$. %\vskip5pt
By $c$ we denote generic positive constants, which may have different values
at different occurrences. Although the exact values of the constants are
usually irrelevant for our purposes, sometimes we emphasize their dependence
on certain parameters (e.g., $c(p)$ means that $c$ depends on $p$, etc.).
Further notation will be properly introduced whenever needed.\vskip5pt

The variable exponents those we consider are always measurable functions $p$
on $\mathbb{R}^{n}$ with range in $[1,\infty \lbrack$. We denote by $%
\mathcal{P}(\mathbb{R}^{n})$ the set of such functions. We use the standard
notation: 
\begin{equation*}
p^{-}:=\underset{x\in \mathbb{R}^{n}}{\text{ess.inf}}\, p(x) \quad \text{and}
\quad p^+:=\underset{x\in \mathbb{R}^{n}}{\text{ess.sup}}\, p(x).
\end{equation*}
The variable exponent modular is defined by 
\begin{equation*}
\varrho _{p(\cdot )}(f):=\int_{\mathbb{R}^{n}} |f(x)|^{p(x)}\, dx.
\end{equation*}
%where $\varrho _{p}(t)=t^{p}$.
The variable exponent Lebesgue space $L^{p(\cdot )}$\ consists of measurable
functions $f$ on $\mathbb{R}^{n}$ such that $\varrho _{p(\cdot )}(\lambda
f)<\infty $ for some $\lambda >0$. We define the Luxemburg (quasi)-norm on
this space by the formula 
\begin{equation*}
\left\Vert f\right\Vert _{p(\cdot )}:=\inf \left\{\lambda >0:\varrho
_{p(\cdot )}\left(\frac{f}{\lambda }\right)\leq 1\right\}.
\end{equation*}
A useful property is that $\left\Vert f\right\Vert _{p(\cdot )}\leq 1$ if
and only if $\varrho _{p(\cdot )}(f)\leq 1$ (see Lemma 3.2.4 from \cite{DHHR}%
). \vskip5pt

A weight function (a weight) is a measurable function $w: \mathbb{R}%
^{n}\rightarrow \mathbb{(}0,\infty \mathbb{)}$. Given a weight $w$, we
denote by $L^{p(\cdot )}(w)$ the space of all measurable functions $f$ on $%
\mathbb{R}^{n}$ such that 
\begin{equation*}
\left\Vert f\right\Vert _{L^{p(\cdot )}(w)}:=\big\|fw^{\frac{1}{p(\cdot )}}%
\big\|_{p(\cdot )}<\infty.
\end{equation*}%
Let $p,q\in \mathcal{P}(\mathbb{R}^{n})$. The mixed Lebesgue-sequence space $%
\ell _{>}^{q(\cdot )}(L^{p(\cdot )})$ is defined on sequences of $L^{p(\cdot
)}$-functions by the modular 
\begin{equation*}
\varrho _{\ell _{>}^{q(\cdot )}(L^{p\left( \cdot \right)
})}((f_{v})_{v}):=\sum\limits_{v=1}^{\infty }\inf \left\{\lambda
_{v}>0:\varrho _{p(\cdot )}\left(\frac{f_{v}}{\lambda _{v}^{1/q(\cdot )}}%
\right)\leq 1\right\}.
\end{equation*}%
The (quasi)-norm is defined from this as usual:%
\begin{equation}
\left\Vert \left( f_{v}\right) _{v}\right\Vert _{\ell _{>}^{q(\cdot
)}(L^{p\left( \cdot \right) })}:=\inf \left\{\mu >0:\varrho _{\ell ^{q(\cdot
)}(L^{p(\cdot )})}\left(\frac{1}{\mu }(f_{v})_{v}\right)\leq 1\right\}.
\label{mixed-norm}
\end{equation}%
If $q^{+}<\infty $, then we can replace $\mathrm{\eqref{mixed-norm}}$ by a
simpler expression: 
\begin{equation*}
\varrho _{\ell _{>}^{q(\cdot )}(L^{p(\cdot
)})}((f_{v})_{v}):=\sum\limits_{v=1}^{\infty }\left\Vert |f_{v}|^{q(\cdot
)}\right\Vert _{\frac{p(\cdot )}{q(\cdot )}}.
\end{equation*}
The case $p=\infty $ can be included by replacing the last modular by 
\begin{equation*}
\varrho _{\ell _{>}^{q(\cdot )}(L^{\infty
})}((f_{v})_{v}):=\sum\limits_{v=1}^{\infty }\big\|\left\vert
f_{v}\right\vert ^{q(\cdot )}\big\|_{\infty }.
\end{equation*}

We say that a real valued-function $g$ on $\mathbb{R}^{n}$ is \textit{%
locally }log\textit{-H\"{o}lder continuous} on $\mathbb{R}^{n}$, abbreviated 
$g\in C_{\text{loc}}^{\log }(\mathbb{R}^{n})$, if for any compact set $K$ of 
$\mathbb{R}^n$, there exists a constant $c_{\log }(g)>0$ such that 
\begin{equation}  \label{lo-log-Holder}
\left\vert g(x)-g(y)\right\vert \leq \frac{c_{\log }(g)}{\log
(e+1/\left\vert x-y\right\vert )}
\end{equation}%
for all $x,y\in K$. If 
\begin{equation*}
|g(x)-g(0)|\leq \frac{c_{\log }(g)}{\log (e+1/|x|)}
\end{equation*}%
for all $x\in \mathbb{R}^{n}$, then we say that $g$ is \emph{$\log $-H\"{o}%
lder continuous at the origin} (or has a \emph{logarithmic decay at the
origin}). We say that $g$ satisfies the log\textit{-H\"{o}lder decay
condition}, if there exist two constants $g_{\infty }\in \mathbb{R}$ and $%
c_{\log }>0$ such that%
\begin{equation*}
\left\vert g(x)-g_{\infty }\right\vert \leq \frac{c_{\log }}{\log
(e+\left\vert x\right\vert )}
\end{equation*}%
for all $x\in \mathbb{R}^{n}$. We say that $g$ is \textit{globally} log%
\textit{-H\"{o}lder continuous} on $\mathbb{R}^{n}$, abbreviated $g\in
C^{\log }(\mathbb{R}^{n})$, if it is\textit{\ }locally log-H\"{o}lder
continuous on $\mathbb{R}^n$ and satisfies the log-H\"{o}lder decay\textit{\ 
}condition.\textit{\ }The constants $c_{\log }(g)$ and $c_{\log }$ are
called the \textit{locally }log\textit{-H\"{o}lder constant } and the log%
\textit{-H\"{o}lder decay constant}, respectively\textit{.} We note that any
function $g\in C_{\text{loc}}^{\log }(\mathbb{R}^{n})$ always belongs to $%
L^{\infty }$.\vskip5pt

We define the following class of variable exponents: 
\begin{equation*}
\mathcal{P}^{\mathrm{log}}(\mathbb{R}^{n}):=\left\{p\in \mathcal{P}(\mathbb{R%
}^{n}):\frac{1}{p}\in C^{\log }(\mathbb{R}^{n})\right\},
\end{equation*}%
which is introduced in \cite[Section 2]{DHHMS}. We define 
\begin{equation*}
\frac{1}{p_{\infty }}:=\lim_{|x|\rightarrow \infty } \frac{1}{p(x)},
\end{equation*}
and we use the convention $\frac{1}{\infty }=0$. Note that although $\frac{1%
}{p}$ is bounded, the variable exponent $p$ itself can be unbounded. We put 
\begin{equation*}
\Psi \left( x\right) :=\sup_{\left\vert y\right\vert \geq \left\vert
x\right\vert }\left\vert \varphi \left( y\right) \right\vert
\end{equation*}
for $\varphi \in L^{1}$. We suppose that $\Psi \in L^{1}$. Then it was
proved in $\mathrm{\cite[Lemma \ 4.6.3]{DHHR}}$ that if $p\in \mathcal{P}^{%
\mathrm{log}}(\mathbb{R}^{n})$, then 
\begin{equation*}
\Vert \varphi _{\varepsilon }\ast f\Vert _{{p(\cdot )}}\leq c\Vert \Psi
\Vert _{{1}}\Vert f\Vert _{{p(\cdot )}}
\end{equation*}%
for all $f\in L^{p(\cdot )}$, where 
\begin{equation*}
\varphi _{\varepsilon }:=\frac{1}{\varepsilon ^{n}}\varphi \left( \frac{%
\cdot }{\varepsilon }\right) , \quad \varepsilon >0.
\end{equation*}
We refer to \cite{CFMP} and \cite{Di}, where various results on maximal
functions on variable Lebesgue spaces are obtained.\vskip5pt

We put 
\begin{equation*}
\eta _{t,m}(x):=t^{-n}(1+t^{-1}\left\vert x\right\vert )^{-m}
\end{equation*}
for any $x\in \mathbb{R}^{n}$, $t>0$ and $m>0$. Note that $\eta _{t,m}\in
L^{1}$ when $m>n$ and that $\left\Vert \eta _{t,m}\right\Vert _{1}=c(m)$ is
independent of $t$. If $t=2^{-v}$, $v\in \mathbb{N}_{0}$ then we put 
\begin{equation*}
\eta _{2^{-v},m}:=\eta _{v,m}.
\end{equation*}
We refer to the recent monograph $\mathrm{\cite{CF13}}$ for further
properties, historical remarks and references on variable exponent spaces.

\subsection{Some technical lemmas}

In this subsection we present some useful results. The following lemma is
proved in \cite[Lemma 6.1]{DHR} (see also \cite[Lemma 19]{KV122}).

\begin{lem}
\label{DHR-lemma}Let $\alpha \in C_{\mathrm{loc}}^{\log }(\mathbb{R}^{n})$, $%
m\in \mathbb{N}_{0}$ and let $R\geq c_{\log }(\alpha )$, where $c_{\log
}(\alpha )$ is the constant from $\mathrm{\eqref{lo-log-Holder}}$ for $%
g=\alpha $. Then there exists a constant $c>0$ such that 
\begin{equation*}
t^{-\alpha (x)}\eta _{t,m+R}(x-y)\leq c\text{ }t^{-\alpha (y)}\eta
_{t,m}(x-y)
\end{equation*}%
for any $0<t\leq 1$ and $x,y\in \mathbb{R}^{n}$. In particular, if $\alpha $
decays logarithmically at the origin, then there exists a constant $c>0$
such that 
\begin{equation*}
t^{-\alpha (x)}\eta _{t,m+R}(x)\leq c\text{ }t^{-\alpha (0)}\eta _{t,m}(x)
\end{equation*}%
for any $0<t\leq 1$ and $x\in \mathbb{R}^{n}$.
\end{lem}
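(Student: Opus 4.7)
After cancelling the common factor $t^{-n}(1+t^{-1}|x-y|)^{-m}$, the first inequality is equivalent to
\begin{equation*}
t^{\alpha (y)-\alpha (x)}\leq c\,\bigl(1+t^{-1}|x-y|\bigr)^{R},\qquad 0<t\leq 1,\ x,y\in \mathbb{R}^{n}.
\end{equation*}
If $\alpha (y)\geq \alpha (x)$ the left side is $\leq 1$ since $t\leq 1$, and the inequality is immediate. So I will assume $\delta :=\alpha (x)-\alpha (y)>0$ and prove $t^{-\delta }\leq c\,(1+t^{-1}|x-y|)^{R}$ by combining the log-H\"older bound $\delta \leq c_{\log }(\alpha )/\log (e+1/|x-y|)$ with a three-way case split on the relative sizes of $|x-y|$, $t$ and $1$.

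In the regime $|x-y|\leq t$, I use $\log (e+1/|x-y|)\geq \log (e+1/t)\geq \log (1/t)$ to obtain $\delta \log (1/t)\leq c_{\log }(\alpha )$, hence $t^{-\delta }\leq e^{c_{\log }(\alpha )}$, while the right side is $\geq 1$. In the intermediate regime $t<|x-y|\leq 1$, I bound $(1+t^{-1}|x-y|)^{R}\geq (|x-y|/t)^{R}$, reducing the claim to $t^{R-\delta }\leq c\,|x-y|^{R}$; since $\delta \leq c_{\log }(\alpha )\leq R$ and $t<|x-y|$, we have $t^{R-\delta }\leq |x-y|^{R-\delta }$, so it remains to bound $|x-y|^{-\delta }$ by a constant, which follows from $\delta \log (1/|x-y|)\leq c_{\log }(\alpha )$. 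In the far regime $|x-y|>1$, I use $1+t^{-1}|x-y|\geq 1/t$ together with $\delta \leq c_{\log }(\alpha )\leq R$ to conclude $t^{-\delta }\leq t^{-R}\leq (1+t^{-1}|x-y|)^{R}$.

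For the second assertion one would like to simply set $y=0$ in the first, but the hypothesis here only gives the log-H\"older estimate for the pair $(x,0)$. Fortunately, that is exactly what the three-case argument above uses (with $|x-y|$ replaced by $|x|$ throughout), so the proof goes through verbatim. The delicate point I anticipate is the middle regime, where both $t^{-\delta }$ and $\log (|x-y|/t)$ may be of comparable magnitude; it is precisely there that the assumption $R\geq c_{\log }(\alpha )$ is needed in order to absorb the logarithmic modulus of continuity of $\alpha $ into the polynomial decay of $\eta _{t,m}$, rather than relying only on the boundedness of $\alpha $.
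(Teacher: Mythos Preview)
Your argument is correct. The reduction to $t^{\alpha(y)-\alpha(x)}\le c\,(1+t^{-1}|x-y|)^{R}$ is the right one, and each of the three regimes is handled cleanly; in particular the bound $\delta\le c_{\log}(\alpha)$ (from $\log(e+1/|x-y|)\ge 1$) is exactly what makes the middle and far cases close, and your observation that $R\ge c_{\log}(\alpha)$ is used precisely there is accurate. The second assertion does indeed follow by the same computation with $y=0$, since only the single-point estimate $|\alpha(x)-\alpha(0)|\le c_{\log}(\alpha)/\log(e+1/|x|)$ is invoked.

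Note that the paper itself does not give a proof of this lemma: it simply cites \cite[Lemma~6.1]{DHR} (and \cite[Lemma~19]{KV122}). Your direct elementary argument is essentially the standard one found in those references, so there is no meaningful methodological difference to comment on.
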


The previous lemma allows us to treat the variable smoothness in many cases
as if it were not variable at all. Namely, we can move the factor $%
t^{-\alpha (x)}$ inside the convolution as follows: 
\begin{equation*}
t^{-\alpha (x)}\eta _{t,m+R}\ast f(x)\leq c\text{ }\eta _{t,m}\ast
(t^{-\alpha (\cdot )}f)(x).
\end{equation*}

\begin{lem}
\label{r-trick}Let $r,N>0$, $m>n$ and $\theta ,\omega \in \mathcal{S}\left( 
\mathbb{R}^{n}\right) $ with $\mathrm{supp}\, \mathcal{F}\omega \subset 
\overline{B(0,1)}$. Then there exists a constant $c=c(r,m,n)>0$ such that
for all $g\in \mathcal{S}^{\prime }\left( \mathbb{R}^{n}\right) $, we have%
\begin{equation*}
\left\vert \theta _{N}\ast \omega _{N}\ast g\left( x\right) \right\vert \leq
c(\eta _{N,m}\ast \left\vert \omega _{N}\ast g\right\vert
^{r}(x))^{1/r},\quad x\in \mathbb{R}^{n},
\end{equation*}%
where $\theta _{N}(\cdot ):=N^{n}\theta (N\cdot )$, $\omega _{N}(\cdot
):=N^{n}\omega (N\cdot )$ and $\eta _{N,m}:=N^{n}(1+N\left\vert \cdot
\right\vert )^{-m}$.
\end{lem}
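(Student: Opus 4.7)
Set $f:=\omega_{N}\ast g$. Since $\mathrm{supp}\,\mathcal{F}\omega\subset\overline{B(0,1)}$, we have $\mathrm{supp}\,\mathcal{F}f\subset\overline{B(0,N)}$, so $f$ is a $C^{\infty}$ function of exponential type. My plan is to reduce the lemma to the pointwise Plancherel--Polya type estimate
\begin{equation*}
|f(x-z)|^{r}\le c\,(1+N|z|)^{m}\,(\eta_{N,m}\ast|f|^{r})(x),\qquad x,z\in\mathbb{R}^{n},
\end{equation*}
valid for every $r>0$ and every $m>n$. Once this pointwise bound is available, the lemma follows by taking the $r$-th root and integrating against $|\theta_{N}|$:
\begin{equation*}
|\theta_{N}\ast f(x)|\le\int|\theta_{N}(z)||f(x-z)|\,dz\le c\,(\eta_{N,m}\ast|f|^{r}(x))^{1/r}\int|\theta_{N}(z)|(1+N|z|)^{m/r}\,dz,
\end{equation*}
and the substitution $u=Nz$ reduces the last integral to $\int|\theta(u)|(1+|u|)^{m/r}du$, which is finite since $\theta\in\mathcal{S}(\mathbb{R}^{n})$.

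To establish the pointwise estimate, I fix an auxiliary $\phi\in\mathcal{S}(\mathbb{R}^{n})$ with $\mathcal{F}\phi\equiv 1$ on $\overline{B(0,1)}$; then $\mathcal{F}\phi_{N}\equiv 1$ on $\overline{B(0,N)}$, yielding the reproducing formula $f=\phi_{N}\ast f$, and the Schwartz decay of $\phi$ gives $|\phi_{N}(w)|\le c_{m}\eta_{N,m}(w)$ for every $m>0$. When $r\ge 1$, combining this with H\"older's inequality (and the fact that $\|\eta_{N,m}\|_{1}\le c$ uniformly in $N$ whenever $m>n$) yields
\begin{equation*}
|f(x-z)|^{r}\le c\int\eta_{N,m}(x-z-y)|f(y)|^{r}\,dy.
\end{equation*}
The factor $(1+N|z|)^{m}$ in the Plancherel--Polya bound is then produced by the elementary estimate $\eta_{N,m}(x-z-y)\le(1+N|z|)^{m}\eta_{N,m}(x-y)$, which follows from $1+N|x-y|\le(1+N|z|)(1+N|x-z-y|)$.

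The delicate case is $0<r<1$, where H\"older's inequality no longer applies. I introduce the Peetre-type maximal function $f^{*}(x):=\sup_{z}|f(x-z)|/(1+N|z|)^{m/r}$ and, inside the reproducing formula $f=\phi_{N}\ast f$, split $|f|=|f|^{1-r}|f|^{r}$, bounding $|f|^{1-r}$ via $f^{*}$. After absorbing the polynomial growth produced by $f^{*}$ into $\eta_{N,m}$, which is allowed since $m$ is at our disposal, I obtain the self-improving estimate
\begin{equation*}
f^{*}(x)\le c\,f^{*}(x)^{1-r}(\eta_{N,m}\ast|f|^{r}(x))^{1/r},
\end{equation*}
and dividing by $f^{*}(x)^{1-r}$ gives the required pointwise bound. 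The main obstacle is precisely this division: one must first secure the qualitative finiteness $f^{*}(x)<\infty$. The standard remedy is to run the argument first for a regularization of $g$ (for instance by convolving with a compactly supported mollifier) for which $f$ decays fast enough that $f^{*}$ is finite, and then to pass to the limit in the Plancherel--Polya bound, which is legitimate because its right-hand side depends only on $|f|^{r}$.
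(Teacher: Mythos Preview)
Your approach is exactly the Str\"omberg--Torchinsky argument the paper invokes (the paper gives no proof of its own, only the reference to \cite[Chapter~V, Theorem~5]{ST89} via \cite{D6}), and the overall strategy is sound. Two points need repair.

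First, the self-improving estimate should read
\[
f^{*}(x)\le c\,f^{*}(x)^{1-r}\,\bigl(\eta_{N,m}\ast|f|^{r}\bigr)(x),
\]
\emph{without} the exponent $1/r$ on the last factor; only then does dividing by $f^{*}(x)^{1-r}$ yield $f^{*}(x)^{r}\le c\,(\eta_{N,m}\ast|f|^{r})(x)$, which is the Plancherel--Polya bound you want. With your exponent the division would leave $(\eta_{N,m}\ast|f|^{r})^{1/r^{2}}$.

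Second, and more substantively, your proposed regularization fails: convolving $g$ with a compactly supported mollifier $\rho_\varepsilon$ produces $f_\varepsilon=f\ast\rho_\varepsilon$, which has exactly the same polynomial growth as $f$ (take $g$ a polynomial to see this), so $(f_\varepsilon)^{*}$ is no more finite than $f^{*}$. A regularization that does work is $f_\varepsilon(y):=f(y)\psi(\varepsilon y)$ with $\psi\in\mathcal{S}(\mathbb{R}^{n})$ satisfying $\mathrm{supp}\,\mathcal{F}\psi\subset\overline{B(0,1)}$ and $\psi(0)=1$: then $f_\varepsilon\in\mathcal{S}(\mathbb{R}^{n})$ (so its Peetre maximal function is trivially finite), $\mathrm{supp}\,\mathcal{F}f_\varepsilon\subset\overline{B(0,N+\varepsilon)}$, the bootstrap runs with $N$ replaced by $N+\varepsilon$ and a constant independent of $\varepsilon$ (the estimate is scale-invariant), and one passes to the limit $\varepsilon\to 0$ by dominated convergence using $|f_\varepsilon|\le\|\psi\|_\infty|f|$ and $\eta_{N+\varepsilon,m}\le 2^{n}\eta_{N,m}$ for $0<\varepsilon<N$.
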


The proof of this lemma is given in \cite{D6} by using the same arguments of 
\cite[Chapter V, Theorem 5]{ST89}. \newline

The next three lemmas are proved in \cite{DHR}, where the first one tells us
that in most circumstances two convolutions are as good as one.

\begin{lem}
\label{Conv-est2} For $v_{0},v_{1}\in \mathbb{N}_{0}$ and $m>n$, we have%
\begin{equation*}
\eta _{v_{0},m}\ast \eta _{v_{1},m}\approx \eta _{\min (v_{0},v_{1}),m}
\end{equation*}%
with the constant depending only on $m$ and $n$.
\end{lem}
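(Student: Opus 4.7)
By symmetry of convolution, I would assume without loss of generality that $v_0 \leq v_1$, so that $\min(v_0,v_1)=v_0$ and the goal becomes $\eta_{v_0,m}\ast\eta_{v_1,m}(x)\approx\eta_{v_0,m}(x)$ with implicit constants depending only on $m$ and $n$. The underlying intuition is that $\eta_{v_1,m}$ is $L^1$-normalized, with $\|\eta_{v_1,m}\|_1=c(m)$ since $m>n$, and is concentrated at the finer scale $2^{-v_1}\leq 2^{-v_0}$; it therefore behaves as an approximate identity from the viewpoint of $\eta_{v_0,m}$, which varies only on the coarser scale $2^{-v_0}$.

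For the upper bound, I would split $\mathbb{R}^n$ into $\{|y|\leq|x|/2\}$ and $\{|y|>|x|/2\}$. On the first region, $|x-y|\geq|x|/2$ yields $(1+2^{v_0}|x-y|)\geq\tfrac{1}{2}(1+2^{v_0}|x|)$, so $\eta_{v_0,m}(x-y)\lesssim\eta_{v_0,m}(x)$, and the integral of $\eta_{v_1,m}$ contributes only $c(m)$, giving the bound $\lesssim\eta_{v_0,m}(x)$. On the complement I would distinguish two subcases. If $2^{v_0}|x|\leq 1$, then $\eta_{v_0,m}(x)\approx 2^{v_0 n}$, and the trivial Young estimate $\|\eta_{v_0,m}\|_\infty\|\eta_{v_1,m}\|_1\lesssim 2^{v_0 n}$ suffices. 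If $2^{v_0}|x|>1$, the key inequality is $1+2^{v_1}|y|\geq 2^{v_1-v_0-2}(1+2^{v_0}|x|)$, obtained by combining $|y|>|x|/2$, $v_1\geq v_0$ and $2^{v_0}|x|\geq 1$; this gives $\eta_{v_1,m}(y)\lesssim 2^{v_1 n - m(v_1-v_0)}(1+2^{v_0}|x|)^{-m}$, and integrating $\eta_{v_0,m}(x-y)$ over $\mathbb{R}^n$ produces a further factor $c(m)$. The collected prefactor is $2^{(v_1-v_0)(n-m)}\cdot 2^{v_0 n}$, which is $\leq 2^{v_0 n}$ since $v_1\geq v_0$ and $n-m<0$, so this contribution is $\lesssim\eta_{v_0,m}(x)$ as well.

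For the lower bound, I would simply restrict the integral to the small ball $B:=\{|y|\leq 2^{-v_1}\}$. There $\eta_{v_1,m}(y)\geq 2^{v_1 n}\cdot 2^{-m}$, and the estimate $|x-y|\leq|x|+2^{-v_1}\leq|x|+2^{-v_0}$ combined with the elementary inequality $(1+2^{v_0}|x-y|)\leq 2(1+2^{v_0}|x|)$ yields $\eta_{v_0,m}(x-y)\gtrsim\eta_{v_0,m}(x)$. Since $|B|\approx 2^{-v_1 n}$ cancels the $2^{v_1 n}$ prefactor, integrating gives $\eta_{v_0,m}\ast\eta_{v_1,m}(x)\gtrsim\eta_{v_0,m}(x)$.

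The main obstacle will be the upper bound on $\{|y|>|x|/2\}$ in the regime $v_1\gg v_0$ and $2^{v_0}|x|>1$: crudely bounding $\eta_{v_1,m}(y)$ by its supremum $2^{v_1 n}$ introduces an unwanted factor $2^{(v_1-v_0)n}$ that cannot be absorbed. The remedy is to retain the full polynomial decay of $\eta_{v_1,m}$ and exploit $m>n$ together with $v_1\geq v_0$ to recover precisely the $2^{v_0 n}$ prefactor of $\eta_{v_0,m}(x)$.
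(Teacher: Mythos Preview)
Your argument is correct. The paper does not supply its own proof of this lemma; it simply records the result and cites \cite{DHR} (Diening, H\"ast\"o, Roudenko). Your direct elementary proof---splitting the convolution integral according to $|y|\le |x|/2$ versus $|y|>|x|/2$ for the upper bound, handling the far region by exploiting the decay of $\eta_{v_1,m}$ together with $m>n$, and restricting to the ball $\{|y|\le 2^{-v_1}\}$ for the lower bound---is the standard proof and matches in spirit what one finds in the cited reference. All the pointwise inequalities you state check out, and the dependence of the constants on $m,n$ only is clear from your computation.
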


For $v\in \mathbb{N}_{0}$ and $m=(m_{1},...,m_{n})\in \mathbb{Z}^{n}$, let $%
Q_{v,m}$ be the dyadic cube in $\mathbb{R}^{n}$: 
\begin{equation*}
Q_{v,m}=\{(x_{1},...,x_{n}):m_{i}\leq 2^{v}x_{i}<m_{i}+1,i=1,2,...,n\}.
\end{equation*}%
As the collection of all such cubes, we define 
\begin{equation*}
\mathcal{Q}=\{Q_{v,m}:v\in \mathbb{N}_{0},m\in \mathbb{Z}^{n}\}.
\end{equation*}%
Then we have: %For each cube $Q$, we denote by
%$x_{Q_{v,m}}$ the lower left-corner $2^{-v}m$ of $Q=Q_{v,m}$, its
%side length by $l(Q)$.

\begin{lem}
\label{Conv-est1}Let $v\in \mathbb{N}_{0}$ and $m>n$. Then for any $Q\in 
\mathcal{Q}$ with $l(Q)=2^{-v}$, $y\in Q$ and $x\in \mathbb{R}^{n}$, we have%
\begin{equation*}
\eta _{v,m}\ast \left( \frac{\chi _{Q}}{|Q|}\right) (x)\approx \eta
_{v,m}(x-y)
\end{equation*}%
with the constant depending only on $m$ and $n$, where $l(Q)$ is the side
length of $Q$.
\end{lem}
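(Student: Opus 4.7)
The convolution in question is simply the average of $\eta_{v,m}(x-\cdot)$ over $Q$:
\begin{equation*}
\eta_{v,m}\ast\!\left(\tfrac{\chi_Q}{|Q|}\right)(x)=\frac{1}{|Q|}\int_Q \eta_{v,m}(x-z)\,dz.
\end{equation*}
So the plan is to show that, uniformly in $z\in Q$ and the fixed reference point $y\in Q$, the integrand $\eta_{v,m}(x-z)$ is comparable to $\eta_{v,m}(x-y)$; then averaging over $Q$ gives the claim.

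First I would exploit that $Q$ has side length $2^{-v}$, so for any $y,z\in Q$,
\begin{equation*}
2^{v}|y-z|\leq \sqrt{n}.
\end{equation*}
Writing $A:=1+2^{v}|x-y|$ and $B(z):=1+2^{v}|x-z|$, the triangle inequality gives $|A-B(z)|\leq 2^{v}|y-z|\leq \sqrt{n}$.

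Next, I would split into two regimes to show $B(z)\approx A$ with constants depending only on $n$. If $A\geq 2\sqrt{n}$, then
\begin{equation*}
\tfrac{1}{2}A\;\leq\;A-\sqrt{n}\;\leq\;B(z)\;\leq\;A+\sqrt{n}\;\leq\;\tfrac{3}{2}A,
\end{equation*}
so $B(z)^{-m}\approx A^{-m}$ with constants $2^{\pm m}$. If instead $A<2\sqrt{n}$, then both $A$ and $B(z)$ lie in the bounded interval $[1,3\sqrt{n}]$, so again $B(z)^{-m}\approx A^{-m}$ with constants depending only on $m$ and $n$. Multiplying through by the common factor $2^{vn}$ yields
\begin{equation*}
\eta_{v,m}(x-z)\;\approx\;\eta_{v,m}(x-y)\qquad (z\in Q),
\end{equation*}
with constants depending only on $m,n$.

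Finally, I would insert this comparison into the averaged integral: the right-hand side $\eta_{v,m}(x-y)$ is independent of $z$, hence
\begin{equation*}
\eta_{v,m}\ast\!\left(\tfrac{\chi_Q}{|Q|}\right)(x)=\tfrac{1}{|Q|}\!\int_Q \eta_{v,m}(x-z)\,dz\;\approx\;\eta_{v,m}(x-y),
\end{equation*}
which is exactly the assertion. There is no real obstacle here; the only small care needed is the case distinction on whether $1+2^v|x-y|$ is large or bounded, since when $A$ is of order $1$ the additive perturbation of size $\sqrt{n}$ is not small relative to $A$ and one must use absolute boundedness from below instead of a multiplicative comparison.
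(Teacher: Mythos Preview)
Your argument is correct. The pointwise comparison $\eta_{v,m}(x-z)\approx\eta_{v,m}(x-y)$ for $y,z\in Q$ follows exactly as you describe from $2^{v}|y-z|\le\sqrt{n}$ and the case split on the size of $A=1+2^{v}|x-y|$, and averaging over $Q$ then gives the stated equivalence with constants depending only on $m$ and $n$.

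As for comparison with the paper: the paper does not actually prove this lemma. It is stated among a group of three lemmas that are attributed to \cite{DHR} (Diening, H\"ast\"o and Roudenko), so there is no in-paper proof to compare against. Your self-contained elementary argument is the standard one and is entirely adequate here.
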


The next lemma is the Hardy type inequality, which is easy to prove.

\begin{lem}
\label{Hardy-inequality} Let $0<a<1$, $\sigma \geq 0$ and $0<q\leq \infty$.
Let $\left\{ \varepsilon _{k}\right\} _{k}$be a sequence of positive real
numbers, and denote 
\begin{equation*}
\delta _{k}=\sum_{j=-\infty }^{\infty }\left\vert k-j\right\vert ^{\sigma
}a^{\left\vert k-j\right\vert }\varepsilon _{j}.
\end{equation*}
Then there exists a constant $c>0$ depending only on $a$ and $q$ such that%
\begin{equation*}
\Big(\sum\limits_{k=-\infty }^{\infty }\delta _{k}^{q}\Big)^{1/q}\leq c\text{
}\Big(\sum\limits_{k=-\infty }^{\infty }\varepsilon _{k}^{q}\Big)^{1/q}.
\end{equation*}
\end{lem}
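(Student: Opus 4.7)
The statement is a standard convolution estimate on $\ell^q(\mathbb{Z})$, and the plan is to recognize $\delta_k$ as the convolution of the sequence $\varepsilon=(\varepsilon_j)_j$ with the kernel $b_j:=|j|^{\sigma}a^{|j|}$, and then exploit that the geometric decay forces $b\in\ell^{1}$ (and in fact $b\in\ell^{s}$ for every $s>0$). First I would record the elementary fact that, because $0<a<1$ and $\sigma\geq 0$, for every $s>0$
\begin{equation*}
\sum_{j=-\infty}^{\infty}|j|^{\sigma s}a^{s|j|}=:C(a,\sigma,s)<\infty ,
\end{equation*}
which supplies the finite constant that will appear on the right-hand side.

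The case $1\leq q\leq\infty$ is the easiest: I would apply Young's inequality for convolution on $\mathbb{Z}$ to get $\|\delta\|_{\ell^{q}}\leq\|b\|_{\ell^{1}}\|\varepsilon\|_{\ell^{q}}$, and take $c=C(a,\sigma,1)$. The case $q=\infty$ actually reduces to a direct pointwise estimate $\delta_k\leq\|\varepsilon\|_{\ell^{\infty}}\sum_{j}|k-j|^{\sigma}a^{|k-j|}$, giving the same constant after translation invariance of the sum.

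For the remaining range $0<q<1$ I would use the $q$-subadditivity of $x\mapsto x^{q}$, namely $\bigl(\sum x_j\bigr)^{q}\leq \sum x_j^{q}$ for nonnegative $x_j$, to obtain
\begin{equation*}
\delta_k^{q}\leq \sum_{j=-\infty}^{\infty}|k-j|^{\sigma q}a^{q|k-j|}\varepsilon_j^{q}.
\end{equation*}
Summing over $k$, interchanging the order of summation (justified by Tonelli as all terms are nonnegative), and translating $k\mapsto k+j$ yields
\begin{equation*}
\sum_{k}\delta_k^{q}\leq\Big(\sum_{k}|k|^{\sigma q}a^{q|k|}\Big)\sum_{j}\varepsilon_j^{q}=C(a,\sigma,q)\sum_{j}\varepsilon_j^{q},
\end{equation*}
from which the inequality follows with $c=C(a,\sigma,q)^{1/q}$.

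There is no real obstacle here; the only mildly delicate point is that the constant produced genuinely depends on $q$ (through the exponent $q\sigma$ in the small-$q$ case), which is consistent with the statement that $c$ depends on $a$ and $q$. I would just be careful to unify the three ranges $q\in(0,1)$, $q\in[1,\infty)$ and $q=\infty$ into one estimate at the end, noting that in all cases the finiteness of $\sum_{j}|j|^{\sigma s}a^{s|j|}$ for any $s>0$ is the only ingredient used.
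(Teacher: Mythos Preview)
Your argument is correct and is the standard route to this convolution-type Hardy inequality. The paper itself does not supply a proof; it merely introduces the lemma with the remark that it ``is the Hardy type inequality, which is easy to prove,'' so there is nothing to compare against. One small observation: the constant you obtain, $C(a,\sigma,s)^{1/s}$, visibly depends on $\sigma$ as well as on $a$ and $q$, whereas the lemma as stated claims dependence only on $a$ and $q$; this is an imprecision in the paper's formulation rather than a gap in your proof.
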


Putting 
\begin{equation*}
w(Q):=\int_Q w(x)\, dx,
\end{equation*}
we have the following result (see Lemma 3.3 for $w=1$ from \cite{DHHMS}).

\begin{lem}
\label{DHHR-estimate}Let $p\in \mathcal{P}^{\mathrm{log}}(\mathbb{R}^{n})$,
and $w$ be a weight function on $\mathbb{R}^{n}$. Then, putting 
\begin{equation*}
\gamma _{m}:=e^{-4mc(1/p)}\in \left( 0,1\right) ,\quad c(1/p):=\max (c_{\log
}(1/p),c_{\log })
\end{equation*}%
for every $m>0$, and 
\begin{equation*}
p_{Q}^{-}:=\underset{z\in Q}{\mathrm{ess}\text{-}\mathrm{\inf }}\,p(z)
\end{equation*}%
for a cube $Q$ in $\mathbb{R}^{n}$, we have the inequality\textrm{:} 
\begin{equation}
\begin{split}
& \left( \frac{\gamma _{m}}{w(Q)}\int_{Q}|f(y)|w(y)\,dy\right) ^{p(x)} \\
& \leq c\max \left( 1,\left( w(Q)\right) ^{1-\frac{p\left( x\right) }{%
p_{Q}^{-}}}\right) \frac{1}{w(Q)}\int_{Q}\left\vert f(y)\right\vert
^{p(y)}w(y)\,dy \\
& +\frac{c\min (|Q|^{m},1)}{w(Q)}\int_{Q}\left\{
(e+|x|)^{-m}+(e+|y|)^{-m}\right\} w(y)\,dy
\end{split}
\label{EQ:Key}
\end{equation}%
for some positive\ constant $c>0$ every\ cube\ $($or ball$)$ $Q$, all $x\in Q
$ and all $f\in L^{p\left( \cdot \right) }(w)$\ with 
\begin{equation*}
\left\Vert f\right\Vert _{L^{p(\cdot )}(w)}\leq 1.
\end{equation*}%
If $Q=(a,b)\subset \mathbb{R}$ with $0<a<b<\infty $, then, putting 
\begin{equation*}
\gamma _{m}:=e^{-4mc_{\log }(1/p)}
\end{equation*}%
for every $m>0$, we have the following inequality\textrm{:}%
\begin{equation}
\begin{split}
& \left( \frac{\gamma _{m}}{w(Q)}\int_{Q}\left\vert f(y)\right\vert
w(y)\,dy\right) ^{p\left( x\right) } \\
\leq & c\text{ }\max \left( 1,\left( w(Q)\right) ^{1-\frac{p\left( x\right) 
}{p^{-}}}\right) \frac{1}{w(Q)}\int_{Q}\phi (y)w(y)\,dy \\
& +\frac{c\text{ }\omega (m,b)}{w(Q)}\int_{Q}g(x,y)w(y)\,dy
\end{split}
\label{EQ:key1}
\end{equation}%
for some positive\ constant $c>0$, all $x\in Q$ and all $f\in L^{p\left(
\cdot \right) }(w)$ with 
\begin{equation*}
\left\Vert f\right\Vert _{L^{p(\cdot )}(w)}\leq 1,
\end{equation*}%
where we put 
\begin{equation*}
\omega (m,b)=\min \left( b^{m},1\right) \text{,\quad }\phi (y)=\left\vert
f(y)\right\vert ^{p(y)}
\end{equation*}%
and%
\begin{equation*}
g(x,y)=\left( e+\frac{1}{x}\right) ^{-m}+\left( e+\frac{1}{y}\right) ^{-m},
\end{equation*}%
or%
\begin{equation*}
\omega (m,b)=\min \left( b^{m},1\right) \text{,\quad }\phi (y)=\left\vert
f(y)\right\vert ^{p(0)}
\end{equation*}%
and%
\begin{equation*}
g(x,y)=\left( e+\frac{1}{x}\right) ^{-m}\chi _{\{z\in Q:p(z){<}p(0)\}}(x)
\end{equation*}%
with $p\in \mathcal{P}(\mathbb{R})$ being $\log $-H\"{o}lder continuous at
the origin. In addition, we have the same estimate, when 
\begin{equation*}
\omega (m,b)=1\text{,\quad }\gamma _{m}=e^{-mc_{\log }},\quad \phi
(y)=\left\vert f(y)\right\vert ^{p_{\infty }}\quad 
\end{equation*}%
and%
\begin{equation*}
g(x,y)=(e+x)^{-m}\chi _{\{z\in Q:p(z){<}p_{\infty }\}}(x)
\end{equation*}%
with $p\in \mathcal{P}(\mathbb{R})$ satisfying the $\log $-H\"{o}lder decay
condition.
\end{lem}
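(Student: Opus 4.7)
The overall strategy is the one used in \cite[Lemma 3.3]{DHHMS}, adapted to the weighted setting. First, by homogeneity we may assume $\|f\|_{L^{p(\cdot)}(w)}\leq 1$, so that $\varrho_{p(\cdot)}(fw^{1/p(\cdot)})\leq 1$. The plan is to split $f$ into a \emph{large} part, where a Jensen-type inequality can be applied with the constant exponent $p_Q^-$, and a \emph{small} part, which is absorbed into the tail term involving $g(x,y)$. Concretely, for the first inequality I would decompose
\begin{equation*}
f=f_1+f_2,\qquad f_1:=f\,\chi_{\{y\in Q:\, |f(y)|>h(y)\}},
\end{equation*}
where $h(y):=\bigl((e+|x|)^{-m}+(e+|y|)^{-m}\bigr)^{1/p(y)}$ (the threshold that makes the small part harmless). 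Then $\int_Q |f|w\,dy \leq \int_Q |f_1|w\,dy+\int_Q h(y)w\,dy$, and $(A+B)^{p(x)}\lesssim A^{p(x)}+B^{p(x)}$ lets me treat each piece separately.

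For the large part I would exploit that on $\{|f|>h\}$ one has $|f_1(y)|\leq |f_1(y)|^{p(y)/p_Q^-}$, which after the log-Hölder computation yields
\begin{equation*}
\gamma_m |f_1(y)|\;\lesssim\; \bigl(|f_1(y)|w(y)^{1/p(y)}\bigr)^{p(y)/p_Q^-}\,w(y)^{-1/p_Q^-}\cdot(\text{log-H\"older factor}),
\end{equation*}
the constant $\gamma_m=e^{-4mc(1/p)}$ being precisely what absorbs the oscillation of $1/p$ across $Q$, as in \cite{DHHMS}. Then Jensen applied with the exponent $p(x)/p_Q^-\geq 1$ to the averaging measure $w(y)\,dy/w(Q)$ gives
\begin{equation*}
\Bigl(\frac{\gamma_m}{w(Q)}\int_Q|f_1|w\,dy\Bigr)^{p(x)}\lesssim w(Q)^{1-p(x)/p_Q^-}\frac{1}{w(Q)}\int_Q |f(y)|^{p(y)}w(y)\,dy,
\end{equation*}
and the factor $w(Q)^{1-p(x)/p_Q^-}$ is replaced by $\max(1,w(Q)^{1-p(x)/p_Q^-})$ after absorbing the case $w(Q)\geq 1$ trivially. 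For the small part, $|f_2(y)|\leq h(y)$ pointwise, so $\int_Q|f_2|w\,dy\lesssim |Q|^{\min(1,\cdot)}\int_Q\bigl((e+|x|)^{-m/p^-}+(e+|y|)^{-m/p^-}\bigr)w(y)\,dy$; after raising to the $p(x)$-th power and using $\min(|Q|^m,1)$ to control the power, one obtains exactly the second term of \eqref{EQ:Key}.

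For \eqref{EQ:key1}, the one-dimensional case on $Q=(a,b)$, I would proceed in the same way but replace local log-Hölder at a generic point by log-Hölder at $0$ (resp.\ at $\infty$) when the threshold involves $(e+1/y)^{-m}$ (resp.\ $(e+y)^{-m}$). In the variant with $\phi(y)=|f(y)|^{p(0)}$, one only needs to compare $p(y)$ with the constant $p(0)$, so the oscillation estimate requires only log-Hölder at the origin; the extra cutoff $\chi_{\{p(z)<p(0)\}}(x)$ arises because when $p(x)\geq p(0)$ the trivial Jensen bound already works without any error term, and similarly for the $p_\infty$ version. The factor $\omega(m,b)=\min(b^m,1)$ replaces $\min(|Q|^m,1)$ by the obvious bound $|Q|\leq b$ on $(0,b)$.

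The main obstacle is the bookkeeping for the weighted Jensen step: tracking the correct power of $w(Q)$, the $\gamma_m$ constants, and the $\max(1,w(Q)^{1-p(x)/p_Q^-})$ dichotomy all at once, while ensuring that the log-Hölder oscillation on $Q$ is uniformly controlled independently of the weight. The three variants of $(\omega,\phi,g)$ in the 1D statement each demand a slightly different choice of threshold and a different invocation of log-Hölder (local, at $0$, or at $\infty$), but once the scheme is set up for the first case, the remaining two are parallel.
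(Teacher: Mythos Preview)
Your overall skeleton (split $f$ into a large part handled by Jensen with exponent $p(x)/p_Q^-$, and a small part feeding the error term) matches the paper, but two of your concrete steps do not go through as written.

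First, the inequality $|f_1(y)|\le |f_1(y)|^{p(y)/p_Q^-}$ requires $|f_1(y)|\ge 1$, not merely $|f_1(y)|>h(y)$; your threshold $h(y)=\bigl((e+|x|)^{-m}+(e+|y|)^{-m}\bigr)^{1/p(y)}$ is strictly less than~$1$, so on the region $h(y)<|f(y)|<1$ the claimed bound is false. The paper avoids this by splitting at the constant threshold~$1$: it sets $f_1=f\chi_{\{|f|>1\}}$, for which $|f_1|^{p_Q^-}\le |f_1|^{p(y)}$ is immediate, and then Jensen with exponent $p(x)/p_Q^-$ and the assumption $\int_Q|f|^{p(y)}w\,dy\le 1$ give the first term of \eqref{EQ:Key} with the factor $w(Q)^{1-p(x)/p_Q^-}$.

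Second, the factor $\min(|Q|^m,1)$ does not fall out of bounding $|f_2|\le h$ and raising to the $p(x)$-th power; your sketch gives no mechanism that ties the size of $Q$ to the error term. In the paper this factor comes from a different route: on the set $\{|f|\le 1,\ p(y)>p(x)\}$ one applies Young's inequality
\[
(\gamma_m|f(y)|)^{p(x)}\le |f(y)|^{p(y)}+\gamma_m^{q(x,y)},\qquad \tfrac{1}{q(x,y)}=\tfrac{1}{p(x)}-\tfrac{1}{p(y)},
\]
and then the \emph{local} log-H\"older bound $\frac{1}{q(x,y)}\le \frac{c(1/p)}{-\log|Q|}$ for $|Q|<1$ yields $\gamma_m^{q(x,y)/2}\le |Q|^m$, while the decay condition gives $\gamma_m^{q(x,y)/2}\le (e+|x|)^{-m}+(e+|y|)^{-m}$. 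This is exactly where the specific choice $\gamma_m=e^{-4mc(1/p)}$ is used. The paper thus needs a \emph{three}-way split $f=f_1+f_2+f_3$ (by $|f|\gtrless 1$ and, on $\{|f|\le 1\}$, by $p(y)\lessgtr p(x)$); the middle piece $f_2$ is handled trivially by $|f|^{p(x)}\le |f|^{p(y)}$, and only $f_3$ requires the Young-inequality argument producing the error term.

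Your description of the three one-dimensional variants is essentially correct in spirit; once the $\mathbb{R}^n$ case is fixed as above, those follow by replacing the auxiliary function $\frac{1}{s(\cdot)}=\big|\frac{1}{p(\cdot)}-\frac{1}{p_\infty}\big|$ with $\frac{1}{h(\cdot)}=\big|\frac{1}{p(\cdot)}-\frac{1}{p(0)}\big|$ (for log-H\"older at the origin) and using $|x|,|y|\le b$ to get $\omega(m,b)=\min(b^m,1)$.
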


%Notice that in the proof of Lemma \ref{DHHR-estimate} we need only that
%\begin{equation*}
%\int_{Q}\left\vert f(y)\right\vert ^{p\left( y\right) }w(y)\, dy\leq 1
%\end{equation*}%
%and/or $\left\Vert f\right\Vert _{\infty }\leq 1$.
The proof of Lemma \ref{DHHR-estimate} is postponed to appendix.

\begin{lem}
\label{Key-lemma1.1} Let $\left\{ f_{v}\right\} _{v\in \mathbb{N}}$ be a
sequence of measurable functions on $\mathbb{R}^{n}$ and $p\in \mathcal{P}(%
\mathbb{R}^{n})$. Let $q\in \mathcal{P}(\mathbb{R})$ be $\log$-H\"{o}lder
continuous at the origin. Then 
\begin{equation*}
\Big\|\Big(t^{-\frac{1}{q(t)}}\left\Vert f_{v}\right\Vert _{p(\cdot )}\chi
_{\lbrack 2^{-v},2^{1-v}]}\Big)_{v}\Big\|_{\ell _{>}^{q(\cdot )}(L^{q(\cdot
)})}\approx \left(\sum_{v=1}^{\infty }\left\Vert f_{v}\right\Vert _{p(\cdot
)}^{q(0)}\right)^{\frac{1}{q(0)}},
\end{equation*}%
where the implicit positive constants are independent of $f_{v},v\in \mathbb{%
N}$.
\end{lem}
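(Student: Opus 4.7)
The plan is to compute the mixed modular of $(g_v)_v$ directly, where $g_v(t):=t^{-1/q(t)}a_v\chi_{[2^{-v},2^{1-v}]}(t)$ with $a_v:=\|f_v\|_{p(\cdot)}$, and to compare it with $(S/\mu)^{q(0)}$, where $S:=\bigl(\sum_v a_v^{q(0)}\bigr)^{1/q(0)}$. Since the outer sequence exponent and the inner integrability exponent are both $q(\cdot)$, the infimum over $\lambda_v$ in the definition of $\ell_{>}^{q(\cdot)}(L^{q(\cdot)})$ is attained at $\lambda_v=\int|g_v(t)/\mu|^{q(t)}\,dt$, so that
\[
\varrho_{\ell_{>}^{q(\cdot)}(L^{q(\cdot)})}\bigl((g_v/\mu)_v\bigr)=\sum_{v\geq 1}\int_{2^{-v}}^{2^{1-v}}t^{-1}(a_v/\mu)^{q(t)}\,dt.
\]
The whole task reduces to sandwiching this sum between constant multiples of $(S/\mu)^{q(0)}$ up to a small additive constant, and then inserting the outcome into the definition of the Luxemburg norm.

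The core step is a two-case analysis based on the log-H\"older continuity of $q$ at the origin. For $t\in[2^{-v},2^{1-v}]$ one has $|q(t)-q(0)|\leq\varepsilon_v:=c_{\log}(q)/\log(e+2^{v-1})$, and a short calculation gives $v\,\varepsilon_v\leq K$ for a constant $K=K(c_{\log}(q))$. I will assume $\mu\geq S$, so $a_v/\mu\leq 1$, and split according to whether $a_v/\mu\geq 2^{-v}$ or not. In the first case $|\log(\mu/a_v)|\leq v\log 2$, so the correction $(a_v/\mu)^{q(t)-q(0)}=\exp((q(t)-q(0))\log(a_v/\mu))$ stays in $[e^{-K\log 2},e^{K\log 2}]$; hence pointwise $(a_v/\mu)^{q(t)}\approx(a_v/\mu)^{q(0)}$ and, since $\int_{2^{-v}}^{2^{1-v}}t^{-1}\,dt=\log 2$, the $v$-th integral is $\approx(a_v/\mu)^{q(0)}$. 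In the second case, since $q^-\geq 1$ and $a_v/\mu\leq 1$, the brute bound $(a_v/\mu)^{q(t)}\leq a_v/\mu<2^{-v}$ forces the $v$-th integral to be $\leq(\log 2)\cdot 2^{-v}$, and these contributions sum to at most $\log 2<1$.

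Putting both cases together one obtains a two-sided sandwich
\[
c_1(S/\mu)^{q(0)}-c_2\;\leq\;\varrho_{\ell_{>}^{q(\cdot)}(L^{q(\cdot)})}\bigl((g_v/\mu)_v\bigr)\;\leq\;c_3(S/\mu)^{q(0)}+c_4,
\]
with $c_2,c_4<1$ (indeed $c_4\leq\log 2$). Taking $\mu=CS$ with $C$ a sufficiently large constant drives the right side below $1$, giving $\|(g_v)_v\|\lesssim S$; conversely, if $\mu\leq c_0 S$ with $c_0$ small then $(S/\mu)^{q(0)}$ dwarfs $c_2$ and the left side exceeds $1$, contradicting $\mu=\|(g_v)_v\|$ and yielding $\|(g_v)_v\|\gtrsim S$. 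The main delicate point is the subcase $a_v/\mu\ll 2^{-v}$: one cannot expand $(a_v/\mu)^{q(t)}\approx(a_v/\mu)^{q(0)}$ through the exponential correction, since $(\mu/a_v)^{\varepsilon_v}$ would be unbounded there; the remedy is the brute pointwise bound $(a_v/\mu)^{q(t)}\leq a_v/\mu$, whose geometric decay $<2^{-v}$ is exactly what makes its tail harmless thanks to the prefactor $\log 2<1$ in the Luxemburg threshold.
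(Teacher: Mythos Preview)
Your computation of the mixed modular is correct, and your two-case splitting according to $a_v/\mu \gtrless 2^{-v}$ captures exactly the mechanism the paper uses (there phrased as the additive correction $\delta_v=\|f_v\|_{p(\cdot)}^{q(0)}+2^{-v}$). Your argument is in fact more elementary than the paper's, since the paper routes Step~2 through Lemma~\ref{DHHR-estimate} whereas you obtain the same pointwise estimate $(a_v/\mu)^{q(t)}\approx(a_v/\mu)^{q(0)}$ directly from $v\varepsilon_v\le K$.

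There is, however, a real gap in the lower bound. Your sandwich
\[
c_1(S/\mu)^{q(0)}-c_2\;\le\;\varrho\bigl((g_v/\mu)_v\bigr)\;\le\;c_3(S/\mu)^{q(0)}+c_4
\]
is derived under the standing hypothesis $\mu\ge S$ (this is what forces $a_v/\mu\le1$, which your Case~1 estimate needs). Your contrapositive for $\|(g_v)_v\|\gtrsim S$ then applies the left inequality at $\mu=c_0S<S$, where it has not been established. Once $\mu<S$ some indices may satisfy $a_v/\mu>1$, and for those the correction factor $(a_v/\mu)^{q(t)-q(0)}$ is no longer controlled by $e^{K\log 2}$; the Case~1 equivalence breaks down. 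Note also that at the boundary $\mu=S$ your own derivation gives $c_1=c_2$ (both equal the lower constant $2^{-K}\log 2$ from Case~1), so the left side reads $\varrho\ge0$, which is vacuous.

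The clean repair is the one the paper uses: normalize by homogeneity so that $\|(g_v)_v\|_{\ell_{>}^{q(\cdot)}(L^{q(\cdot)})}\le1$, hence $\sum_v\int_{2^{-v}}^{2^{1-v}}t^{-1}a_v^{q(t)}\,dt\le1$, and prove $\sum_v a_v^{q(0)}\lesssim1$. From the modular bound each index with $a_v>1$ contributes at least $\log 2$, so there is at most one such index, and for it $a_v^{q(0)}\le(1/\log 2)^{q^+/q^-}$. For all remaining $v$ one has $a_v\le1$, so your Case~1/Case~2 dichotomy at $\mu=1$ applies verbatim and yields $a_v^{q(0)}\lesssim\int_{2^{-v}}^{2^{1-v}}t^{-1}a_v^{q(t)}\,dt+2^{-v}$; summing gives $\sum_v a_v^{q(0)}\lesssim 1$.
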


\begin{proof} We divide the proof into two steps. \\

\textit{Step 1.} Let $\left\{ f_{v}\right\} _{v\in \mathbb{N}}$ be a
sequence of measurable functions such that the right-hand side is
less than or equal to 1. We will prove that
\begin{equation*}
\sum_{v=1}^{\infty }\int_{2^{-v}}^{2^{1-v}}\left\Vert
f_{v}\right\Vert _{p(\cdot )}^{q(t)}\frac{dt}{t}\lesssim 1.
\end{equation*}%
Our estimate clearly follows from the inequality%
\begin{equation*}
\int_{2^{-v}}^{2^{1-v}}\left\Vert f_{v}\right\Vert _{p(\cdot )}^{q(t)}\frac{%
dt}{t}\lesssim \left\Vert f_{v}\right\Vert _{p(\cdot
)}^{q(0)}+2^{-v}=:\delta_v
\end{equation*}%
for any $v\in \mathbb{N}$. This claim can be reformulated as showing\ that%
\begin{equation*}
\int_{2^{-v}}^{2^{1-v}}\big(\delta_v^{-\frac{1}{q(t)}}\left\Vert
f_{v}\right\Vert _{p(\cdot )}\big)^{q(t)}\frac{dt}{t}\lesssim 1.
\end{equation*}%
Let us prove that%
\begin{equation*}
\delta_v^{-\frac{1}{q(t)}}\left\Vert f_{v}\right\Vert _{p(\cdot
)}\lesssim 1
\end{equation*}%
for any $t\in \lbrack 2^{-v},2^{1-v}]$ and any $v\in \mathbb{N}$. We
use the
log-H\"{o}lder continuity of $q$ at the origin to show that%
\begin{equation*}
\delta_v^{-\frac{q\left( 0\right) }{q(t)}}\approx \delta ^{-1},\text{ \ \ }%
t\in \lbrack 2^{-v},2^{1-v}],v\in \mathbb{N}.
\end{equation*}%
Therefore, from the definition of $\delta_v$, we find that%
\begin{equation*}
\delta_v^{-\frac{q\left( 0\right) }{q(t)}}\left\Vert
f_{v}\right\Vert _{p(\cdot )}^{q\left( 0\right) }\lesssim 1.
\end{equation*}%
Hence we obtain the desired estimate.\\

\textit{Step 2.} Let $\left\{ f_{v}\right\} _{v\in \mathbb{N}}$ be a
sequence of measurable functions such that the left-hand side is
less than or equal to 1. We will prove that
\begin{equation*}
\sum_{v=1}^{\infty }\left\Vert f_{v}\right\Vert _{p(\cdot
)}^{q(0)}\lesssim 1.
\end{equation*}%
Clearly, the estimate follows from the inequality:
\begin{equation*}
\left\Vert f_{v}\right\Vert _{p(\cdot )}^{q(0)}\lesssim
\int_{2^{-v}}^{2^{1-v}}\left\Vert f_{v}\right\Vert _{p(\cdot )}^{q(\tau )}%
\frac{d\tau }{\tau }+2^{-v}=:\delta_v
\end{equation*}%
for any $v\in \mathbb{N}$. This claim can be reformulated as showing that%
\begin{equation*}
\big(\delta_v^{-\frac{1}{q(0)}}\left\Vert f_{v}\right\Vert _{p(\cdot )}\big)%
^{q(0)}=\Big(\frac{1}{\log 2}\int_{2^{-v}}^{2^{1-v}}\delta_v^{-\frac{1}{q(0)}%
}\left\Vert f_{v}\right\Vert _{p(\cdot )}\frac{d\tau }{\tau }\Big)%
^{q(0)}\lesssim 1.
\end{equation*}%
By Lemma \ref{DHHR-estimate}, we have
\begin{equation*}
\Big(\frac{\gamma_m}{\log 2}\int_{2^{-v}}^{2^{1-v}}\delta_v^{-\frac{1}{q(0)}%
}\left\Vert f_{v}\right\Vert _{p(\cdot )}\frac{d\tau }{\tau
}\Big)^{q\left( t\right) }\lesssim \int_{2^{-v}}^{2^{1-v}}\delta_v
^{-\frac{q\left( \tau \right) }{q(0)}}\left\Vert f_{v}\right\Vert
_{p(\cdot )}^{q\left( \tau \right) }\frac{d\tau }{\tau }+1,
\end{equation*}%
where $\gamma_m=e^{-2mc_{\log }(1/p)}$ and $m>0$. We use the
log-H\"{o}lder
continuity of $q$ at the origin to show that%
\begin{equation*}
\delta_v^{-\frac{q\left( \tau \right) }{q(0)}}\approx \delta_v^{-1},
\text{ \ \ }\tau \in \lbrack 2^{-v},2^{1-v}],v\in \mathbb{N}.
\end{equation*}%
Therefore, from the definition of $\delta $, we find that%
\begin{equation*}
\int_{2^{-v}}^{2^{1-v}}\delta_v^{-1}\left\Vert f_{v}\right\Vert
_{p(\cdot )}^{q\left( \tau \right) }\frac{d\tau }{\tau }\lesssim 1
\end{equation*}%
for any $v\in \mathbb{N}$, which implies that%
\begin{equation*}
\big(\delta_v^{-\frac{1}{q(0)}}\left\Vert f_{v}\right\Vert _{p(\cdot )}\big)%
^{q(0)}\lesssim 1
\end{equation*}%
for any $v\in \mathbb{N}$. The proof of Lemma \ref{Key-lemma1.1} is
complete.
\end{proof}

By a similar argument as in Lemma \ref{Key-lemma1.1},\ we have:

\begin{lem}
\label{Key-lemma1.2}Let $\left\{ f_{v}\right\} _{v\in \mathbb{N}}$ be a
sequence of measurable functions on $\mathbb{R}^{n}$, $p\in \mathcal{P}(%
\mathbb{R}^{n})$ and $\alpha \in C_{\mathrm{loc}}^{\log }\left( 
%TCIMACRO{\U{211d} }%
%BeginExpansion
\mathbb{R}
%EndExpansion
^{n}\right) $\textit{.\ }Let $q\in \mathcal{P}(\mathbb{R})$\ \textit{be log-H%
\"{o}lder continuous at the origin}. Then 
\begin{equation*}
\Big\|\Big(\big\|t^{-\alpha (\cdot )-\frac{1}{q(t)}}f_{v}\chi _{Q_{v}}\big\|%
_{p(\cdot )}\chi _{\lbrack 2^{-v},2^{1-v}]}\Big)_{v}\Big\|_{\ell
_{>}^{q(\cdot )}(L^{q(\cdot )})}\approx \left(\sum_{v=1}^{\infty }\big\|%
2^{v\alpha (\cdot )}f_{v}\chi _{Q_{v}}\big\|_{p(\cdot )}^{q(0)}\right)^{%
\frac{1}{q(0)}},
\end{equation*}%
where $Q_{v}$, $v\in \mathbb{N}$, are dyadic cubes on $\mathbb{R}^{n}$\ with 
$\ell (Q_{v})=2^{-v}$, and the implicit positive constants are independent
of $f_{v},v\in \mathbb{N}$.
\end{lem}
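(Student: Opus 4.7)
The strategy is to reduce Lemma \ref{Key-lemma1.2} directly to Lemma \ref{Key-lemma1.1} by absorbing the factor $t^{-\alpha(\cdot)}\chi_{Q_v}$ into the sequence to which Lemma \ref{Key-lemma1.1} is applied. First, I split the exponent as $t^{-\alpha(x) - 1/q(t)} = t^{-\alpha(x)}\, t^{-1/q(t)}$ and observe that $t^{-1/q(t)}$ does not depend on the integration variable $x$ of the inner $L^{p(\cdot)}$-norm, so it factors out:
\[
\bigl\|t^{-\alpha(\cdot) - 1/q(t)} f_v \chi_{Q_v}\bigr\|_{p(\cdot)} \;=\; t^{-1/q(t)}\,\bigl\|t^{-\alpha(\cdot)} f_v \chi_{Q_v}\bigr\|_{p(\cdot)}.
\]

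Next, I replace the $t$-dependent factor $t^{-\alpha(\cdot)}$ by the $t$-independent $2^{v\alpha(\cdot)}$. For $t \in [2^{-v}, 2^{1-v}]$, write $t = 2^{-v+\theta}$ with $\theta \in [0,1]$; then the pointwise ratio $t^{-\alpha(x)}/2^{v\alpha(x)} = 2^{-\theta \alpha(x)}$ is bounded above and below by constants depending only on $\|\alpha\|_\infty$. Since $\alpha \in C_{\mathrm{loc}}^{\log}(\mathbb{R}^n) \subset L^\infty(\mathbb{R}^n)$ (as noted in the preliminaries), this yields
\[
\bigl\|t^{-\alpha(\cdot)} f_v \chi_{Q_v}\bigr\|_{p(\cdot)} \;\approx\; \bigl\|2^{v\alpha(\cdot)} f_v \chi_{Q_v}\bigr\|_{p(\cdot)}
\]
uniformly in $t \in [2^{-v}, 2^{1-v}]$ and in $v \in \mathbb{N}$. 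Introducing $g_v := 2^{v\alpha(\cdot)} f_v \chi_{Q_v}$, the left-hand side of Lemma \ref{Key-lemma1.2} is therefore equivalent to
\[
\Big\|\Big(t^{-1/q(t)}\,\|g_v\|_{p(\cdot)}\,\chi_{[2^{-v},2^{1-v}]}\Big)_v\Big\|_{\ell_{>}^{q(\cdot)}(L^{q(\cdot)})},
\]
to which Lemma \ref{Key-lemma1.1} applies verbatim and produces the expression $\bigl(\sum_{v=1}^\infty \|g_v\|_{p(\cdot)}^{q(0)}\bigr)^{1/q(0)}$ displayed on the right of the statement.

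The only technical point is the uniform comparison $t^{-\alpha(x)} \approx 2^{v\alpha(x)}$, which uses nothing more than boundedness of $\alpha$; the full strength of the local log-Hölder continuity is not needed for this reduction. Should one prefer to imitate the proof of Lemma \ref{Key-lemma1.1} step by step rather than quote it (this is presumably what the author means by \emph{``a similar argument''}), the same two-step modular argument based on Lemma \ref{DHHR-estimate} and the log-Hölder continuity of $q$ at the origin goes through without change, provided one replaces every occurrence of $\|f_v\|_{p(\cdot)}$ by $\|2^{v\alpha(\cdot)} f_v \chi_{Q_v}\|_{p(\cdot)}$ throughout both steps.
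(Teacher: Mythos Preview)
Your proof is correct. The paper gives no proof beyond the phrase ``by a similar argument as in Lemma~\ref{Key-lemma1.1}'', and your direct reduction to Lemma~\ref{Key-lemma1.1} via the pointwise equivalence $t^{-\alpha(x)}\approx 2^{v\alpha(x)}$ for $t\in[2^{-v},2^{1-v}]$ (using only $\alpha\in L^\infty$, which the paper asserts for $C_{\mathrm{loc}}^{\log}$ functions) is a clean and fully rigorous way to carry this out---arguably cleaner than literally repeating the two-step modular computation. Your remark that neither the cube structure $\chi_{Q_v}$ nor the full log-H\"older regularity of $\alpha$ is actually needed for the reduction is also correct.
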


The next two lemmas are the continuous version of the Hardy type inequality,
where the second lemma for constant exponents is from \cite{Mo87}.

\begin{lem}
\label{Key-lemma1}Let $p\in \mathcal{P}(\mathbb{R}^{n})$. Let $q\in \mathcal{%
P}(\mathbb{R})$ be $\log$-H\"{o}lder continuous at the origin with $1\leq
q^{-}\leq q^{+}<\infty $. Let $\left\{ f_{v}\right\} _{v\in \mathbb{N}} $ be
a sequence of measurable functions on $\mathbb{R}^{n}$. For all $x\in 
\mathbb{R}^{n}$, $v\in \mathbb{N}$ and all $\delta >0$, let 
\begin{equation*}
g_{v}(x)=\sum_{k=1}^{\infty }2^{-|k-v|\delta }f_{k}(x).
\end{equation*}
Then there exists a positive constant $c$, independent of $\left\{
f_{v}\right\} _{v\in \mathbb{N}}$ such that%
\begin{equation*}
\Big\|\Big(t^{-\frac{1}{q(t)}}\left\Vert g_{v}\right\Vert _{p(\cdot )}\chi
_{\lbrack 2^{-v},2^{1-v}]}\Big)_{v}\Big\|_{\ell _{>}^{q(\cdot )}(L^{q(\cdot
)})}\leq c\Big\|\Big(t^{-\frac{1}{q(t)}}\left\Vert f_{v}\right\Vert
_{p(\cdot )}\chi _{\lbrack 2^{-v},2^{1-v}]}\Big)_{v}\Big\|_{\ell
_{>}^{q(\cdot )}(L^{q(\cdot )})}.
\end{equation*}
\end{lem}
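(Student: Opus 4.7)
The plan is to reduce the estimate to the standard discrete Hardy inequality via the equivalence established in Lemma \ref{Key-lemma1.1}. Applying that lemma to both sides of the desired inequality gives
\begin{equation*}
\Big\|\Big(t^{-\frac{1}{q(t)}}\|g_{v}\|_{p(\cdot)}\chi_{[2^{-v},2^{1-v}]}\Big)_{v}\Big\|_{\ell_{>}^{q(\cdot)}(L^{q(\cdot)})} \approx \Big(\sum_{v=1}^{\infty}\|g_{v}\|_{p(\cdot)}^{q(0)}\Big)^{1/q(0)},
\end{equation*}
with the analogous equivalence for the sequence $(f_{v})_{v}$ on the right-hand side. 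This reduces the problem to showing the constant-exponent inequality
\begin{equation*}
\Big(\sum_{v=1}^{\infty}\|g_{v}\|_{p(\cdot)}^{q(0)}\Big)^{1/q(0)} \leq c\,\Big(\sum_{v=1}^{\infty}\|f_{v}\|_{p(\cdot)}^{q(0)}\Big)^{1/q(0)}.
\end{equation*}

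Next, since $q(0) \geq q^{-} \geq 1$, I would apply the triangle inequality in $L^{p(\cdot)}$ to the pointwise definition $g_{v}(x)=\sum_{k=1}^{\infty}2^{-|k-v|\delta}f_{k}(x)$ to obtain the bound $\|g_{v}\|_{p(\cdot)} \leq \sum_{k=1}^{\infty}2^{-|k-v|\delta}\|f_{k}\|_{p(\cdot)}$. Setting $\varepsilon_{k}:=\|f_{k}\|_{p(\cdot)}$ (and extending by zero for $k\leq 0$), this is precisely the setup of Lemma \ref{Hardy-inequality} with $\sigma=0$, $a=2^{-\delta}\in(0,1)$ and exponent $q=q(0)\in[1,\infty)$. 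Invoking that lemma yields the required discrete estimate with a constant depending only on $\delta$ and $q(0)$, which completes the argument.

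The only real step requiring care is the invocation of Lemma \ref{Key-lemma1.1}, since its proof exploits the log-Hölder continuity of $q$ at the origin to freeze $q(t)\approx q(0)$ on the dyadic interval $[2^{-v},2^{1-v}]$; once that reduction is accepted, the remainder of the argument is routine. I do not foresee any essential obstacle: the variable-exponent content has been absorbed into Lemma \ref{Key-lemma1.1}, and what remains is a classical Hardy inequality combined with Minkowski's inequality in $L^{p(\cdot)}$.
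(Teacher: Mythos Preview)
Your proof is correct and follows exactly the route indicated in the paper: reduce via Lemma~\ref{Key-lemma1.1} to the constant exponent $q(0)$, then apply the discrete Hardy inequality of Lemma~\ref{Hardy-inequality} to the scalar sequence $\varepsilon_k=\|f_k\|_{p(\cdot)}$. The only cosmetic remark is that the triangle inequality in $L^{p(\cdot)}$ is justified by $p^-\geq 1$ (from $p\in\mathcal{P}(\mathbb{R}^n)$), not by the condition on $q(0)$ you cite just before it.
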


\begin{proof} The required estimate follows from Lemmas \ref
{Hardy-inequality} and \ref{Key-lemma1.1}.
\end{proof}

\begin{lem}
\label{lq-inequality} Let $s>0$, and let $q\in \mathcal{P}(\mathbb{R})$ be $%
\log $-H\"{o}lder continuous at the origin with $1\leq q^{-}\leq
q^{+}<\infty $. Let $\{ \varepsilon _{t}\} _{t}$ be a sequence of positive
measurable functions. Let%
\begin{equation*}
\eta _{t}=t^{s}\int_{t}^{1}\tau ^{-s}\varepsilon _{\tau }\frac{d\tau }{\tau }%
\quad \text{and\quad }\delta _{t}=t^{-s}\int_{0}^{t}\tau ^{s}\varepsilon
_{\tau }\frac{d\tau }{\tau }.
\end{equation*}%
Then there exists a constant $c>0\ $\textit{depending only on }$s$, $q^{-}$%
\textit{, c}$_{\log }(q)$ \textit{and }$q^{+}$ such that%
\begin{equation*}
\left\Vert \eta _{t}\right\Vert _{L^{q(\cdot )}((0,1],\frac{dt}{t}%
)}+\left\Vert \delta _{t}\right\Vert _{L^{q(\cdot )}((0,1],\frac{dt}{t}%
)}\leq c\left\Vert \varepsilon _{t}\right\Vert _{L^{q(\cdot )}((0,1],\frac{dt%
}{t})}.
\end{equation*}
\end{lem}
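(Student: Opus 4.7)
The plan is to reduce the continuous Hardy inequality to its discrete counterpart (Lemma~\ref{Hardy-inequality}) via a dyadic decomposition, and then to pass between the continuous $L^{q(\cdot)}((0,1],dt/t)$ norm and the discrete $\ell^{q(0)}$ sum by means of Lemma~\ref{Key-lemma1.1}. By homogeneity it is enough to assume $\|\varepsilon_t\|_{L^{q(\cdot)}((0,1],dt/t)}\leq 1$ and to prove $\|\eta_t\|_{L^{q(\cdot)}((0,1],dt/t)}+\|\delta_t\|_{L^{q(\cdot)}((0,1],dt/t)}\lesssim 1$; the two operators are handled symmetrically, so I concentrate on $\eta_t$.

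\textbf{Step 1 (dyadic decomposition).} For $v\in\mathbb{N}$ put $I_v:=(2^{-v},2^{1-v}]$ and $E_v:=\int_{I_v}\varepsilon_\tau\,d\tau/\tau$. For $t\in I_v$ split the defining integral of $\eta_t$ along the partition $(0,1]=\bigsqcup_{k\geq 1}I_k$. Since $\tau^{-s}\approx 2^{ks}$ on $I_k$ and $t^{s}\approx 2^{-vs}$ on $I_v$, a direct computation gives
$$\eta_t\lesssim \sum_{k=1}^{v}2^{-(v-k)s}E_k\quad\text{and}\quad \delta_t\lesssim \sum_{k\geq v}2^{-(k-v)s}E_k\qquad (t\in I_v).$$
Writing $A_v:=\sum_{k\geq 1}2^{-|v-k|s}E_k$, we have $\eta_t+\delta_t\lesssim A_v$ on $I_v$, so $\eta_t\lesssim \sum_v A_v\chi_{I_v}(t)$ on $(0,1]$.

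\textbf{Step 2 (continuous-to-discrete and discrete Hardy).} Applied to constant ``sequences'' with values $A_v$ (the dependence on $p(\cdot)$ being inessential for this step), Lemma~\ref{Key-lemma1.1} yields the equivalence
$$\Big\|\sum_v A_v\chi_{I_v}\Big\|_{L^{q(\cdot)}((0,1],dt/t)}\approx \Big(\sum_{v=1}^{\infty}A_v^{q(0)}\Big)^{1/q(0)},$$
which uses the log-H\"older continuity of $q$ at the origin to freeze the exponent at $q(0)$ on each dyadic piece $I_v$. Now Lemma~\ref{Hardy-inequality} with $a=2^{-s}$ and $\sigma=0$, applied to the sequence $(E_k)$, gives
$$\Big(\sum_v A_v^{q(0)}\Big)^{1/q(0)}=\Big(\sum_v\Big(\sum_k 2^{-|v-k|s}E_k\Big)^{q(0)}\Big)^{1/q(0)}\lesssim \Big(\sum_k E_k^{q(0)}\Big)^{1/q(0)}.$$

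\textbf{Step 3 (return to the hypothesis).} H\"older's inequality on $I_k$ (the logarithmic length of which equals $\log 2$, independent of $k$) gives $E_k\lesssim \|\varepsilon\chi_{I_k}\|_{L^{q(\cdot)}((0,1],dt/t)}$. Using the reverse direction of the equivalence of Step~2, together with the fact that the $I_k$ partition $(0,1]$, one obtains
$$\Big(\sum_k E_k^{q(0)}\Big)^{1/q(0)}\lesssim \Big(\sum_k\|\varepsilon\chi_{I_k}\|_{L^{q(\cdot)}((0,1],dt/t)}^{q(0)}\Big)^{1/q(0)}\lesssim \|\varepsilon\|_{L^{q(\cdot)}((0,1],dt/t)}\leq 1,$$
which, chained with Steps~1 and~2, completes the proof. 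The principal technical obstacle is the passage between the continuous variable-exponent norm and the discrete $\ell^{q(0)}$-sum used in Steps~2 and~3; this is precisely the role of Lemma~\ref{Key-lemma1.1}, and is what forces the assumption that $q$ be log-H\"older continuous at the origin.
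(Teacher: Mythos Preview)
Your overall strategy is correct and takes a genuinely different route from the paper. The paper works throughout at the modular level: after a dyadic decomposition it bounds $\int_{I_v}\eta_t^{q(t)}\,dt/t$ directly, using Lemma~\ref{DHHR-estimate} to convert each $\bigl(\int_{I_j}\varepsilon_\tau\,d\tau/\tau\bigr)^{q(t)}$ into $\int_{I_j}\varepsilon_\tau^{q(\tau)}\,d\tau/\tau+2^{-jm}$, and then applies Lemma~\ref{Hardy-inequality} to the resulting modular sums. You instead freeze the exponent at $q(0)$ once via Lemma~\ref{Key-lemma1.1}, run the discrete Hardy inequality in $\ell^{q(0)}$, and pass back. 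Your approach is cleaner in that it separates the variable-exponent difficulty (handled by Lemma~\ref{Key-lemma1.1}) from the Hardy estimate (classical); the paper's approach avoids introducing the auxiliary exponent $q(0)$ but repeats the log-H\"older computation inside the main estimate.

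One step needs more care. In Step~3 the inequality
\[
\Big(\sum_k\|\varepsilon\chi_{I_k}\|_{L^{q(\cdot)}}^{q(0)}\Big)^{1/q(0)}\lesssim\|\varepsilon\|_{L^{q(\cdot)}}
\]
is \emph{not} an immediate consequence of ``the reverse direction of Step~2 together with the partition'': in variable Lebesgue spaces a disjoint-support decomposition does not give $\sum_k\|\varepsilon\chi_{I_k}\|^{q(0)}=\|\varepsilon\|^{q(0)}$, and the reverse direction of Lemma~\ref{Key-lemma1.1} only reduces you to bounding $\|\sum_k a_k\chi_{I_k}\|_{L^{q(\cdot)}}$ with $a_k=\|\varepsilon\chi_{I_k}\|$, which is an averaging estimate still to be justified. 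The fix is short: assuming $\|\varepsilon\|_{L^{q(\cdot)}}\le 1$, split according to whether $a_k\le 2^{-k}$ (then $a_k^{q(0)}\le 2^{-k}$, summable) or $a_k>2^{-k}$; in the latter case log-H\"older continuity at the origin gives $a_k^{q(t)}\approx a_k^{q(0)}$ uniformly for $t\in I_k$, whence $a_k^{q(0)}\lesssim\int_{I_k}\varepsilon_\tau^{q(\tau)}\,d\tau/\tau$ (using that $\int_{I_k}(\varepsilon/a_k)^{q(\tau)}\,d\tau/\tau=1$). Summing yields the claim. This is exactly the mechanism inside the proof of Lemma~\ref{Key-lemma1.1}, so your invocation is morally right but the justification you wrote is too brief to stand on its own.
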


\begin{proof} This lemma is proved in \cite{D.S2007}, and here, we
give an alternative proof. We suppose that $\left\Vert \varepsilon
_{t}\right\Vert
_{L^{q(\cdot )}((0,1],\frac{dt}{t})}\leq 1$. Notice that%
\begin{equation*}
\left\Vert \eta _{t}\right\Vert _{L^{q(\cdot )}((0,1],\frac{dt}{t})}\approx %
\Big\|\Big(t^{-\frac{1}{q(t)}}\eta _{t}\chi _{\lbrack 2^{-v},2^{1-v}]}\Big)%
_{v}\Big\|_{\ell _{>}^{q(\cdot )}(L^{q(\cdot )})}.
\end{equation*}%
We see that%
\begin{equation*}
\int_{2^{-v}}^{1}\tau ^{-s}\varepsilon _{\tau }\frac{d\tau }{\tau }%
=\sum_{i=0}^{v-1}\int_{2^{i-v}}^{2^{i-v+1}}\tau ^{-s}\varepsilon _{\tau }%
\frac{d\tau }{\tau }=\sum_{j=1}^{v}\int_{2^{-j}}^{2^{1-j}}\tau
^{-s}\varepsilon _{\tau }\frac{d\tau }{\tau }.
\end{equation*}%
Let $\sigma >0$ be such that $q^{+}<\sigma $. We have%
\begin{eqnarray*}
\Big(\sum_{j=1}^{v}\int_{2^{-j}}^{2^{1-j}}\tau ^{-s}\varepsilon _{\tau }%
\frac{d\tau }{\tau }\Big)^{q(t)/\sigma } &\leq &\sum_{j=1}^{v}\Big(%
\int_{2^{-j}}^{2^{1-j}}\tau ^{-s}\varepsilon _{\tau }\frac{d\tau }{\tau }%
\Big)^{q(t)/\sigma } \\
&\leq &\sum_{j=1}^{v}2^{jsq(t)/\sigma }\Big(\int_{2^{-j}}^{2^{1-j}}%
\varepsilon _{\tau }\frac{d\tau }{\tau }\Big)^{q(t)/\sigma } \\
&=&2^{vsq(t)/\sigma }\sum_{j=1}^{v}2^{(j-v)sq(t)/\sigma }\Big(%
\int_{2^{-j}}^{2^{1-j}}\varepsilon _{\tau }\frac{d\tau }{\tau }\Big)%
^{q(t)/\sigma }.
\end{eqnarray*}%
By H\"{o}lder's inequality, we estimate this expression by%
\begin{equation*}
2^{vsq(t)/\sigma }\Big(\sum_{j=1}^{v}2^{(j-v)sq(t)/\sigma }\Big(%
\int_{2^{-j}}^{2^{1-j}}\varepsilon _{\tau }\frac{d\tau }{\tau }\Big)^{q(t)}%
\Big)^{1/\sigma }.
\end{equation*}%
By Lemma \ref{DHHR-estimate} we find an $m>0$ such that%
\begin{equation*}
\Big(\frac{1}{(v-j+1)\log 2}\int_{2^{-v}}^{2^{1-j}}\varepsilon
_{\tau }\chi _{\lbrack 2^{-j},2^{1-j}]}(\tau )\frac{d\tau }{\tau
}\Big)^{q(t)}\lesssim
\frac{1}{v-j+1}\int_{2^{-j}}^{2^{1-j}}\varepsilon _{\tau }^{q(\tau )}\frac{%
d\tau }{\tau }+2^{-jm}
\end{equation*}%
for any $v\geq j$ and any $t\in \lbrack 2^{-v},2^{1-v}]\subset
\lbrack 2^{-v},2^{1-j}]$. Therefore, we get
\begin{equation*}
\eta _{t}^{q(t)}\lesssim \sum_{j=1}^{v}2^{(j-v)sq^{-}/\sigma
}(v-j+1)^{q^{+}-1}\int_{2^{-j}}^{2^{1-j}}\varepsilon _{\tau }^{q(\tau )}%
\frac{d\tau }{\tau }+h_{v}
\end{equation*}%
for any $t\in \lbrack 2^{-v},2^{1-v}]$, where%
\begin{equation*}
h_{v}=\sum_{j=1}^{v}2^{(j-v)sq^{-}/\sigma
}(v-j+1)^{q^{+}}2^{-jm},\quad v\in \mathbb{N}.
\end{equation*}%
Observe that $$\int_{2^{-v}}^{2^{1-v}}\frac{dt}{t}=\log 2.$$ Then
\begin{equation*}
\int_{2^{-v}}^{2^{1-v}}\eta _{t}^{q(t)}\frac{dt}{t}\lesssim
\sum_{j=1}^{v}2^{(j-v)sq^{-}/\sigma
}(v-j+1)^{q^{+}-1}\int_{2^{-j}}^{2^{1-j}}\varepsilon _{\tau }^{q(\tau )}%
\frac{d\tau }{\tau }+h_{v}.
\end{equation*}%
Applying Lemma \ref{Hardy-inequality}, we get%
\begin{equation*}
\sum_{v=1}^{\infty }\int_{2^{-v}}^{2^{1-v}}\eta _{t}^{q(t)}\frac{dt}{t}%
\lesssim \sum_{j=1}^{\infty }\int_{2^{-j}}^{2^{1-j}}\varepsilon
_{\tau }^{q(\tau )}\frac{d\tau }{\tau }+c\lesssim 1
\end{equation*}%
by taking $m$ large enough such that $m>0$. Hence, we get
\begin{equation*}
\left\Vert \eta _{t}\right\Vert _{L^{q(\cdot
)}((0,1],\frac{dt}{t})}\lesssim 1\text{.}
\end{equation*}%
Now we prove that%
\begin{equation*}
\left\Vert \delta _{t}\right\Vert _{L^{q(\cdot )}((0,1],\frac{dt}{t}%
)}\lesssim 1.
\end{equation*}%
Notice that%
\begin{equation*}
\left\Vert \delta _{t}\right\Vert _{L^{q(\cdot )}((0,1],\frac{dt}{t}%
)}\approx \Big\|\Big(t^{-\frac{1}{q(t)}}\delta _{t}\chi _{\lbrack
2^{-v},2^{1-v}]}\Big)_{v}\Big\|_{\ell _{>}^{q(\cdot )}(L^{q(\cdot
)})}.
\end{equation*}%
We have%
\begin{equation*}
\int_{0}^{2^{1-v}}\tau ^{s}\varepsilon _{\tau }\frac{d\tau }{\tau }%
=\sum_{i=-\infty }^{-v}\int_{2^{i}}^{2^{i+1}}\tau ^{s}\varepsilon _{\tau }%
\frac{d\tau }{\tau }=\sum_{j=v}^{\infty }\int_{2^{-j}}^{2^{1-j}}\tau
^{s}\varepsilon _{\tau }\frac{d\tau }{\tau }.
\end{equation*}%
Let $\sigma >0$ be such that $q^{+}<\sigma $. We have%
\begin{eqnarray*}
\Big(\sum_{j=v}^{\infty }\int_{2^{-j}}^{2^{1-j}}\tau ^{s}\varepsilon _{\tau }%
\frac{d\tau }{\tau }\Big)^{q(t)/\sigma } &\leq &\sum_{j=v}^{\infty }\Big(%
\int_{2^{-j}}^{2^{1-j}}\tau ^{s}\varepsilon _{\tau }\frac{d\tau }{\tau }\Big)%
^{q(t)/\sigma } \\
&\leq &\sum_{j=v}^{\infty }2^{-jsq(t)/\sigma }\Big(\int_{2^{-j}}^{2^{1-j}}%
\varepsilon _{\tau }\frac{d\tau }{\tau }\Big)^{q(t)/\sigma } \\
&=&2^{-vsq(t)/\sigma }\sum_{j=v}^{\infty }2^{(v-j)sq(t)/\sigma }\Big(%
\int_{2^{-j}}^{2^{1-j}}\varepsilon _{\tau }\frac{d\tau }{\tau }\Big)%
^{q(t)/\sigma }.
\end{eqnarray*}%
Again, by H\"{o}lder's inequality, we estimate this expression by%
\begin{equation*}
2^{-vsq(t)/\sigma }\Big(\sum_{j=v}^{\infty }2^{(v-j)sq(t)/\sigma }\Big(%
\int_{2^{-j}}^{2^{1-j}}\varepsilon _{\tau }\frac{d\tau }{\tau }\Big)^{q(t)}%
\Big)^{1/\sigma }.
\end{equation*}%
Applying again Lemmas \ref{DHR-lemma} and \ref{DHHR-estimate}, we get%
\begin{equation*}
\Big(\frac{1}{(j-v+1)\log 2}\int_{2^{-j}}^{2^{1-v}}\varepsilon
_{\tau }\chi _{\lbrack 2^{-j},2^{1-j}]}(\tau )\frac{d\tau }{\tau
}\Big)^{q(t)}\lesssim
\frac{1}{j-v+1}\int_{2^{-j}}^{2^{1-j}}\varepsilon _{\tau }^{q(\tau )}\frac{%
d\tau }{\tau }+2^{-vm}
\end{equation*}%
for any $j\geq v$ and any $t\in \lbrack 2^{-v},2^{1-v}]\subset
\lbrack 2^{-j},2^{1-v}]$. Therefore, we deduce that
\begin{equation*}
\delta _{t}^{q(t)}\lesssim \sum_{j=v}^{\infty }2^{(v-j)sq(t)/\sigma
}(j-v+1)^{q^{+}-1}\int_{2^{-j}}^{2^{1-j}}\varepsilon _{\tau }^{q(\tau )}%
\frac{d\tau }{\tau }+f_{v}
\end{equation*}%
for any $t\in \lbrack 2^{-v},2^{1-v}]$, where%
\begin{equation*}
f_{v}=2^{-vm},\quad v\in \mathbb{N}.
\end{equation*}%
Again, observe that $$\int_{2^{-v}}^{2^{1-v}}\frac{dt}{t}=\log 2.$$
Then
\begin{equation*}
\int_{2^{-v}}^{2^{1-v}}\delta _{t}^{q(t)}\frac{dt}{t}\lesssim
\sum_{j=v}^{\infty }2^{(v-j)sq(t)/\sigma
}(j-v+1)^{q^{+}-1}\int_{2^{-j}}^{2^{1-j}}\varepsilon _{\tau }^{q(\tau )}%
\frac{d\tau }{\tau }+f_{v}.
\end{equation*}%
By taking $m$ large enough such that $m>0$ and again\ by Lemma \ref%
{Hardy-inequality}\ we\ get%
\begin{equation*}
\sum_{v=1}^{\infty }\int_{2^{-v}}^{2^{1-v}}\delta _{t}^{q(t)}\frac{dt}{t}%
\lesssim \sum_{j=1}^{\infty }\int_{2^{-j}}^{2^{1-j}}\varepsilon
_{\tau }^{q(\tau )}\frac{d\tau }{\tau }+c\lesssim 1.
\end{equation*}%
The proof of Lemma \ref{lq-inequality} is completed by the scaling
argument.
\end{proof}

The following lemma is from \cite[Lemma 1]{Ry01}.

\begin{lem}
\label{Ry-Lemma1} Let $\rho ,\mu \in \mathcal{S}(\mathbb{R}^{n})$, and $%
M\geq -1$ an integer such that 
\begin{equation*}
\int_{\mathbb{R}^{n}}x^{\alpha }\mu (x)dx=0
\end{equation*}
for all $\left\vert \alpha \right\vert \leq M$. Then for any $N>0$, there is
a constant $c(N)>0$ such that 
\begin{equation*}
\sup_{z\in \mathbb{R}^{n}}\left\vert t^{-n}\mu (t^{-1}\cdot )\ast \rho
(z)\right\vert (1+\left\vert z\right\vert )^{N}\leq c(N)\text{ }t^{M+1}.
\end{equation*}
\end{lem}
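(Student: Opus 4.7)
The plan is to run the classical Taylor-expansion argument: the vanishing moments of $\mu$ annihilate the polynomial part of the Taylor expansion of $\rho$, leaving only the order-$(M{+}1)$ remainder, which supplies the factor $t^{M+1}$; the Schwartz decay of $\rho$ together with a Peetre-type inequality then transfers the weight $(1+|z|)^N$ onto things controllable by $\mu$.

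First I would change variables $w = y/t$ in the convolution to rewrite
$$\bigl(t^{-n}\mu(t^{-1}\cdot)\ast\rho\bigr)(z) = \int_{\mathbb{R}^n}\mu(w)\,\rho(z-tw)\,dw,$$
and then expand $\rho(z-tw)$ in the variable $tw$ around the origin by Taylor's formula with integral remainder of order $M+1$. The assumption $\int w^\alpha\mu(w)\,dw = 0$ for all $|\alpha|\leq M$ kills every polynomial term after integration against $\mu$, leaving only
$$t^{M+1}\sum_{|\alpha|=M+1}\frac{(-1)^{M+1}(M+1)}{\alpha!}\int_0^1(1-s)^M\int_{\mathbb{R}^n}\mu(w)\,w^\alpha\,(\partial^\alpha\rho)(z-stw)\,dw\,ds.$$
The factor $t^{M+1}$ has already been extracted; what remains is a bounded multilinear expression independent of $t$ in spirit.

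To bound $(1+|z|)^N$ times the absolute value of this expression, I would use the Schwartz decay $|(\partial^\alpha\rho)(z-stw)| \leq c_K (1+|z-stw|)^{-K}$ for any $K>0$, combined with the Peetre-type inequality
$$(1+|z|)^N \leq 2^N(1+|z-stw|)^N (1+|stw|)^N.$$
Choosing $K \geq N$ makes $(1+|z-stw|)^{N-K} \leq 1$, so the weight collapses to at most $(1+|tw|)^N$, which is absorbed by the rapid decay of $\mu$: for the range $t\in(0,1]$ relevant in applications, $(1+|tw|)^N \leq (1+|w|)^N$ and the integral $\int|\mu(w)|\,|w|^{M+1}(1+|w|)^N\,dw$ is finite since $\mu\in\mathcal{S}(\mathbb{R}^n)$. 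Gathering constants yields the estimate $c(N)\,t^{M+1}$.

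The step requiring the most care is the separation of scales: the weight $(1+|z|)^N$ must be absorbed without introducing any $z$-dependence in the constant, and it is essential first to localize the $z$-dependence in $\partial^\alpha\rho(z-stw)$ via the Schwartz decay, and only afterwards redistribute the weight by Peetre between $(1+|z-stw|)$ (which is then killed by a sufficiently large $K$) and $(1+|stw|)$ (which is then killed by the decay of $\mu$). Once this bookkeeping is done, the rest of the argument consists of elementary estimates with constants depending only on $\mu$, $\rho$, and $N$.
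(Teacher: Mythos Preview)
Your argument is correct and is precisely the standard Taylor-remainder proof; the paper itself does not prove this lemma but simply quotes it from Rychkov \cite[Lemma~1]{Ry01}, whose proof is exactly the one you outline. One small remark: the estimate as stated (and as you use it) is really for $0<t\le 1$, which is implicit in Rychkov's formulation and is the only range needed in the applications; your restriction to that range is appropriate.
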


\section{Variable\ Besov\ spaces}

In this section we\ present the definition of Besov spaces of variable
smoothness and integrability, and prove the basic properties in analogy to
the Besov spaces with fixed exponents. Select a pair of Schwartz functions $%
\Phi $ and $\varphi $ satisfying 
\begin{equation}
\mathrm{supp}\, \mathcal{F}\Phi \subset \{x\in \mathbb{R}^{n}:\left\vert
x\right\vert <2\}, \quad \mathrm{supp}\, \mathcal{F}\varphi \subset
\left\{x\in \mathbb{R}^{n}:\frac{1}{2}<\left\vert x\right\vert <2\right\}
\label{Ass1}
\end{equation}%
and 
\begin{equation}
\mathcal{F}\Phi (\xi )+\int_{0}^{1}\mathcal{F}\varphi (t\xi )\frac{dt}{t}%
=1,\quad \xi \in \mathbb{R}^{n}.  \label{Ass2}
\end{equation}%
Such a resolution \eqref{Ass1} and \eqref{Ass2} of unity can be constructed
as follows. Let $\mu \in \mathcal{S}(\mathbb{R}^{n})$\ be such that $%
\left\vert \mathcal{F}\mu (\xi )\right\vert >0$\ for $\frac{1}{2}<\left\vert
\xi \right\vert <2$. There exists $\eta \in \mathcal{S}(\mathbb{R}^{n})$\
with 
\begin{equation*}
\mathrm{supp} \, \mathcal{F}\eta \subset \left\{x\in \mathbb{R}^{n}:\frac{1}{%
2}<\left\vert x\right\vert <2\right\}
\end{equation*}
such that 
\begin{equation*}
\int_{0}^{\infty }\mathcal{F}\mu (t\xi )\, \mathcal{F}\eta (t\xi )\frac{dt}{t%
}=1,\quad \xi \neq 0
\end{equation*}%
(see \cite{CaTor75}, \cite{Hei74} and \cite{JaTa81}). We set $\mathcal{F}%
\varphi =\mathcal{F}\mu \, \mathcal{F}\eta $ and 
\begin{equation*}
\mathcal{F}\Phi (\xi )=\left\{ 
\begin{array}{ccc}
\displaystyle{\int_{1}^{\infty}} \mathcal{F}\varphi (t\xi )\, \dfrac{dt}{t}
& \text{if} & \xi \neq 0, \\ 
1 & \text{if} & \xi =0.%
\end{array}%
\right.
\end{equation*}%
Then $\mathcal{F}\Phi \in \mathcal{S}(\mathbb{R}^{n})$, and as $\mathcal{F}%
\eta $ is supported in $\{x\in \mathbb{R}^{n}:\frac{1}{2}<\left\vert
x\right\vert <2\}$, we see that supp $\mathcal{F}\Phi \subset \{x\in \mathbb{%
R}^{n}:\left\vert x\right\vert <2\}$.\newline

Now we define the spaces under consideration.

\begin{defn}
\label{B-F-def}Let $\alpha :\mathbb{R}^{n}\rightarrow \mathbb{R}$, $p\in 
\mathcal{P}(\mathbb{R}^{n})$ and $q\in \mathcal{P}(\mathbb{R})$. Let $\{%
\mathcal{F}\Phi ,\mathcal{F}\varphi \}$ be a resolution of unity and we put $%
\varphi _{t}=t^{-n}\varphi (\frac{\cdot }{t})$. The Besov space $\boldsymbol{%
B}_{p(\cdot ),q(\cdot )}^{\alpha (\cdot )}$\ is the collection of all $f\in 
\mathcal{S}^{\prime }(\mathbb{R}^{n})$\ such that 
\begin{equation}  \label{B-def}
\left\Vert f\right\Vert _{\boldsymbol{B}_{p(\cdot ),q(\cdot )}^{\alpha
(\cdot )}}:=\left\Vert \Phi \ast f\right\Vert _{p(\cdot )}+\left\Vert
\left\Vert t^{-\alpha (\cdot )}(\varphi _{t}\ast f)\right\Vert _{p(\cdot
)}\right\Vert _{L^{q(\cdot )}((0,1],\frac{dt}{t})}<\infty .
\end{equation}
\end{defn}

When $q=\infty, $\ the Besov space $\boldsymbol{B}_{p(\cdot ),\infty
}^{\alpha (\cdot )}$\ consist of all distributions $f\in \mathcal{S}^{\prime
}(\mathbb{R}^{n})$\ such that 
\begin{equation*}
\left\Vert \Phi \ast f\right\Vert _{p(\cdot )}+\sup_{t\in (0,1]}\left\Vert
t^{-\alpha (\cdot )}(\varphi _{t}\ast f)\right\Vert _{p(\cdot )}<\infty .
\end{equation*}

\vspace{5mm}

One recognizes immediately that $\boldsymbol{B}_{p(\cdot ),q(\cdot
)}^{\alpha (\cdot )}$ is a normed space, and if $\alpha $, $p$ and $q$ are
constants, then $\boldsymbol{B}_{p,q}^{\alpha }$ is the usual Besov spaces.
For general literature on function spaces of variable smoothness and
integrability we refer to [2-4, 10-11, 13-15, 18, 23, 25-26, 28, 32, 40-45].%
\newline

Now, we are ready to show that the definition of these\ function spaces is
independent of the chosen resolution $\{\mathcal{F}\Phi ,\mathcal{F}\varphi
\}$ of unity. This justifies our omission of the subscript $\Phi $ and $%
\varphi $ in the sequel.

\begin{thm}
\label{14} Let $\{\mathcal{F}\Phi ,\mathcal{F}\varphi \}$ and $\left\{ 
\mathcal{F}\Psi ,\mathcal{F}\psi \right\}$ be two resolutions of unity, and $%
p\in \mathcal{P}^{\log }\left( \mathbb{R}^{n}\right) $ and $\alpha \in C_{%
\mathrm{loc}}^{\log }\left( \mathbb{R}^{n}\right) $. Let $q\in \mathcal{P}(%
\mathbb{R})$ be log-H\"older continuous at the origin. Then 
\begin{equation*}
\left\Vert f\right\Vert _{\boldsymbol{B}_{p(\cdot ),q(\cdot )}^{\alpha
(\cdot )}}^{\Phi ,\varphi }\approx \left\Vert f\right\Vert _{\boldsymbol{B}%
_{p(\cdot ),q(\cdot )}^{\alpha (\cdot )}}^{\Psi ,\psi }.
\end{equation*}
\end{thm}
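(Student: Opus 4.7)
The plan is to establish $\|f\|^{\Phi,\varphi}_{\boldsymbol{B}^{\alpha(\cdot)}_{p(\cdot),q(\cdot)}} \lesssim \|f\|^{\Psi,\psi}_{\boldsymbol{B}^{\alpha(\cdot)}_{p(\cdot),q(\cdot)}}$ via a Calder\'{o}n reproducing-formula argument; the reverse inequality follows by interchanging the roles of the two resolutions. Starting from
\[
f \;=\; \Psi \ast f \;+\; \int_{0}^{1} \psi_\tau \ast f \, \frac{d\tau}{\tau},
\]
which holds in $\mathcal{S}'(\mathbb{R}^n)$ because $\{\mathcal{F}\Psi,\mathcal{F}\psi\}$ is a resolution of unity, convolve with $\varphi_t$. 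The annular Fourier supports $\mathrm{supp}\,\mathcal{F}\varphi_t \subset \{\tfrac{1}{2t}<|\xi|<\tfrac{2}{t}\}$, $\mathrm{supp}\,\mathcal{F}\Psi \subset \{|\xi|<2\}$, $\mathrm{supp}\,\mathcal{F}\psi_\tau \subset \{\tfrac{1}{2\tau}<|\xi|<\tfrac{2}{\tau}\}$ force the localizations
\[
\varphi_t \ast f \;=\; \chi_{(1/4,1]}(t)\,\varphi_t \ast \Psi \ast f \;+\; \int_{t/4}^{\min(4t,1)} \varphi_t \ast \psi_\tau \ast f \, \frac{d\tau}{\tau},
\]
together with the analogous decomposition of $\Phi \ast f$ in which the integral runs only over $\tau \in (1/4,1]$.

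For the high-frequency band $\tau \in (t/4,4t)$, the Schwartz character of $\varphi_t$ yields the pointwise estimate $|\varphi_t \ast \psi_\tau \ast f(x)| \lesssim \eta_{t, M+R} \ast |\psi_\tau \ast f|(x)$ for any $M,R>0$. Choosing $R \geq c_{\log}(\alpha)$, Lemma \ref{DHR-lemma} transfers the weight $t^{-\alpha(x)}$ through the convolution:
\[
t^{-\alpha(x)}\,|\varphi_t \ast \psi_\tau \ast f(x)| \;\lesssim\; \eta_{\tau,M} \ast \big(\tau^{-\alpha(\cdot)}|\psi_\tau \ast f|\big)(x),
\]
where the switch from $t^{-\alpha}$ to $\tau^{-\alpha}$ costs at most a bounded factor because $\tau \sim t$ and $\alpha \in L^\infty$. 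Taking the $L^{p(\cdot)}$-norm and invoking the $L^{p(\cdot)}$-boundedness of convolution with $\eta_{\tau,M}$ for $M>n$ (see \cite[Lemma 4.6.3]{DHHR}, applicable since $p \in \mathcal{P}^{\log}$) gives the key estimate
\[
\big\|t^{-\alpha(\cdot)}\,(\varphi_t \ast \psi_\tau \ast f)\big\|_{p(\cdot)} \;\lesssim\; \varepsilon_\tau, \qquad \varepsilon_\tau := \big\|\tau^{-\alpha(\cdot)}(\psi_\tau \ast f)\big\|_{p(\cdot)}.
\]

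To pass to $L^{q(\cdot)}((0,1],dt/t)$, split the $\tau$-integral at $\tau=t$ and insert harmless factors $(\tau/t)^{\pm s}$ for a small $s>0$; this majorizes $\int_{t/4}^{\min(4t,1)} \varepsilon_\tau \,\frac{d\tau}{\tau}$ by $c(\delta_t + \eta_t)$ in the notation of Lemma \ref{lq-inequality}, and that lemma yields
\[
\Big\|\int_{t/4}^{\min(4t,1)} \varepsilon_\tau \,\frac{d\tau}{\tau}\Big\|_{L^{q(\cdot)}((0,1],\frac{dt}{t})} \;\lesssim\; \|\varepsilon_t\|_{L^{q(\cdot)}((0,1],\frac{dt}{t})},
\]
which is precisely the main term in $\|f\|^{\Psi,\psi}$.

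The residual low-frequency pieces---$\Phi \ast \Psi$, $\Phi \ast \psi_\tau$ for $\tau \in (1/4,1]$, and $\chi_{(1/4,1]}(t)\,\varphi_t \ast \Psi$---are all convolutions with Schwartz functions whose decay is uniform on $L^{p(\cdot)}$. On the bounded logarithmic interval $\tau,t \in (1/4,1]$ the weight $\tau^{-\alpha(\cdot)}$ is comparable to $1$, so a direct H\"{o}lder estimate (using $q^- \geq 1$ and the finite logarithmic measure of $(1/4,1]$) dominates these terms by $\|\Psi \ast f\|_{p(\cdot)} + \|\varepsilon_t\|_{L^{q(\cdot)}((0,1],\frac{dt}{t})}$. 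Assembling the bounds produces $\|f\|^{\Phi,\varphi} \lesssim \|f\|^{\Psi,\psi}$. The main technical obstacle is synchronizing the variable weight $t^{-\alpha(x)}$ with the convolution kernel over the band $\tau \sim t$ without losing the comparability to the other resolution; Lemma \ref{DHR-lemma} resolves this, after which Lemma \ref{lq-inequality} absorbs the $\tau$-integration and no new machinery beyond Section 2 is required.
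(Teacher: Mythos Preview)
Your proposal is correct and follows essentially the same route as the paper's own proof: the Calder\'on decomposition of $f$ in terms of $(\Psi,\psi)$, the Fourier-support localization of $\varphi_t\ast\psi_\tau$ to $\tau\in(t/4,\min(4t,1))$, the use of Lemma~\ref{DHR-lemma} to commute $t^{-\alpha(\cdot)}$ past the convolution and then the $L^{p(\cdot)}$-boundedness of $\eta_{\tau,M}\ast(\cdot)$, and finally Lemma~\ref{lq-inequality} for the $L^{q(\cdot)}((0,1],dt/t)$ estimate, with the residual low-frequency pieces handled by H\"older on the finite interval $(1/4,1]$. The paper organizes the argument by splitting the cases $t\in(0,1/4)$ and $t\in[1/4,1]$ a bit more explicitly, but the ingredients and their roles are identical to yours.
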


\begin{proof} It is sufficient to show that there exists a constant
$c>0$ such that for all $%
f\in \boldsymbol{B}_{p(\cdot ),q(\cdot )}^{\alpha (\cdot )}$ we have $$%
\left\Vert f\right\Vert _{\boldsymbol{B}_{p(\cdot ),q(\cdot
)}^{\alpha
(\cdot )}}^{\Phi ,\varphi }\leq c\left\Vert f\right\Vert _{\boldsymbol{B}%
_{p(\cdot ),q(\cdot )}^{\alpha (\cdot )}}^{\Psi ,\psi }.$$
Interchanging the roles of $\left( \Psi ,\psi \right) $ and $\left(
\Phi ,\varphi \right) $ we
obtain the desired result. We have%
\begin{equation*}
\mathcal{F}\Phi (\xi )=\mathcal{F}\Phi (\xi )\mathcal{F}\Psi (\xi
)+\int_{1/4}^{1}\mathcal{F}\Phi (\xi )\mathcal{F}\psi (\tau \xi
)\frac{d\tau }{\tau }
\end{equation*}%
and%
\begin{equation*}
\mathcal{F}\varphi (t\xi )=\int_{t/4}^{\min (1,4t)}\mathcal{F}\varphi (t\xi )%
\mathcal{F}\psi (\tau \xi )\frac{d\tau }{\tau }+\left\{
\begin{array}{ccc}
0, & \text{if} & 0<t<\frac{1}{4}; \\
\mathcal{F}\varphi (t\xi )\mathcal{F}\Psi (\xi ), & \text{if} & \frac{1}{4}%
\leq t\leq 1%
\end{array}%
\right.
\end{equation*}%
for any $\xi \in
%TCIMACRO{\U{211d} }%
%BeginExpansion
\mathbb{R}
%EndExpansion
^{n}$. Then we see that
\begin{equation}
\Phi \ast f=\Phi \ast \Psi \ast f+\int_{1/4}^{1}\Phi \ast \psi _{\tau }\ast f%
\frac{d\tau }{\tau } \label{dec-phi}
\end{equation}%
and%
\begin{equation*}
\varphi _{t}\ast f=\int_{t/4}^{\min (1,4t)}\varphi _{t}\ast \psi
_{\tau }\ast f\frac{d\tau }{\tau }+\left\{
\begin{array}{ccc}
0, & \text{if} & 0<t<\frac{1}{4}; \\
\varphi _{t}\ast \Psi \ast f, & \text{if} & \frac{1}{4}\leq t\leq 1.%
\end{array}%
\right.
\end{equation*}%
Since\ $p\in \mathcal{P}^{\log }\left(
%TCIMACRO{\U{211d} }%
%BeginExpansion
\mathbb{R}
%EndExpansion
^{n}\right) $, the convolution with a radially decreasing $L^{1}$%
-function is bounded on $L^{p(\cdot )}$:%
\begin{equation*}
\left\Vert \Phi \ast f\right\Vert _{p(\cdot )}\lesssim \left\Vert
\Psi \ast f\right\Vert _{p(\cdot )}+\int_{1/4}^{1}\left\Vert \psi
_{\tau }\ast f\right\Vert _{p(\cdot )}\frac{d\tau }{\tau }\lesssim
\left\Vert f\right\Vert _{\boldsymbol{B}_{p(\cdot ),q(\cdot
)}^{\alpha (\cdot )}}^{\Psi ,\psi },
\end{equation*}%
and%
\begin{eqnarray*}
&&\left\Vert t^{-\alpha (\cdot )-1/q(t)}(\varphi _{t}\ast
f)\right\Vert
_{p(\cdot )} \\
&\lesssim &\int_{t/4}^{\min (1,4t)}\left\Vert \tau ^{-\alpha (\cdot
)-1/q(\tau )}(\psi _{\tau }\ast f)\right\Vert _{p(\cdot
)}\frac{d\tau }{\tau }+\left\{
\begin{array}{ccc}
0, & \text{if} & 0<t<\frac{1}{4}; \\
\left\Vert \varphi _{t}\ast \Psi \ast f\right\Vert _{p(\cdot )}, &
\text{if}
& \frac{1}{4}\leq t\leq 1,%
\end{array}%
\right.
\end{eqnarray*}%
where we used%
\begin{equation*}
t^{-\alpha (x)-1/q(t)}\approx 1,\quad x\in \mathbb{R}^{n}.
\end{equation*}%
If $\frac{1}{4}\leq t\leq 1$, then
\begin{equation*}
t^{-1/q(t)}=\Big(\frac{t}{\tau }\Big)^{-1/q(t)}\tau ^{-1/q(t)}\leq
\tau ^{-1/q(t)}\lesssim \tau ^{-1/q(\tau )},\text{ \ \ \
}\frac{t}{4}\leq \tau \leq \min (1,4t),
\end{equation*}%
and%
\begin{equation*}
t^{-\alpha (x)}\lesssim (1+t^{-1}|x-y|)^{c_{\log }(\alpha )}\tau
^{-\alpha (y)},\quad x,y\in \mathbb{R}^{n}
\end{equation*}%
by Lemma \ref{DHR-lemma} and again the fact that the convolution
with a radially decreasing $L^{1}$-function is bounded on
$L^{p(\cdot )}$. H\"{o}lder's inequality gives that for any
$\frac{1}{4}\leq t\leq 1$
\begin{equation*}
\left\Vert t^{-\alpha (\cdot )-\frac{1}{q(t)}}(\varphi _{t}\ast
f)\right\Vert _{p(\cdot )}\lesssim \left\Vert f\right\Vert
_{\boldsymbol{B}_{p(\cdot ),q(\cdot )}^{\alpha (\cdot )}}^{\Psi
,\psi }\left\Vert \chi _{\lbrack t/4,\min (1,4t)]}\right\Vert
_{q^{\prime }(\cdot )}+\left\Vert \Psi \ast
f\right\Vert _{p(\cdot )}\lesssim \left\Vert f\right\Vert _{\boldsymbol{B}%
_{p(\cdot ),q(\cdot )}^{\alpha (\cdot )}}^{\Psi ,\psi }.
\end{equation*}%
Taking the $L^{q(\cdot )}([\frac{1}{4},1])$-norm we obtain the
desired estimate. In case\ $0<t<\frac{1}{4}$ we obtain
\begin{equation*}
\left\Vert t^{-\alpha (\cdot )}(\varphi _{t}\ast f)\right\Vert
_{p(\cdot
)}\lesssim \int_{t/4}^{4t}\min ((\frac{\tau }{t})^{s},(\frac{t}{\tau }%
)^{s})\left\Vert \tau ^{-\alpha (\cdot )}(\psi _{\tau }\ast
f)\right\Vert _{p(\cdot )}\frac{d\tau }{\tau },\quad s>0.
\end{equation*}%
Taking again the $L^{q(\cdot )}((0,\frac{1}{4}])$-norm and using Lemma \ref%
{lq-inequality} we conclude that
\begin{eqnarray*}
\left\Vert \left\Vert t^{-\alpha (\cdot )}(\varphi _{t}\ast
f)\right\Vert _{p(\cdot )}\right\Vert _{L^{q(\cdot
)}((0,\frac{1}{4}],\frac{dt}{t})} &\lesssim &\left\Vert \left\Vert
\tau ^{-\alpha (\cdot )}(\varphi _{t}\ast
f)\right\Vert _{p(\cdot )}\right\Vert _{L^{q(\cdot )}((0,\frac{1}{4}],\frac{%
d\tau }{\tau })} \\
&\lesssim &\left\Vert f\right\Vert _{\boldsymbol{B}_{p(\cdot
),q(\cdot )}^{\alpha (\cdot )}}^{\Psi ,\psi }.
\end{eqnarray*}%
Notice that the case $q:=\infty $ can be easily solved. The proof of
Theorem \ref{14} is now finished .
\end{proof}

\begin{rem}
Let $\alpha :\mathbb{R}^{n}\rightarrow \mathbb{R}$, $p\in \mathcal{P}(%
\mathbb{R}^{n})$ and $q\in \mathcal{P}(\mathbb{R})$, with $1\leq q^{-}\leq
q^{+}<\infty $. Let $\{\mathcal{F}\Phi ,\mathcal{F}\varphi \}$ be a
resolution of unity\textrm{.} We set 
\begin{equation*}
\left\Vert f\right\Vert _{\boldsymbol{B}_{p(\cdot ),q(\cdot )}^{\alpha
(\cdot )}}^{\ast }:=\left\Vert \Phi \ast f\right\Vert _{p(\cdot
)}+\left\Vert \Big(t^{-\frac{1}{q(t)}}\left\Vert t^{-\alpha (\cdot
)}(\varphi _{t}\ast f)\right\Vert _{p(\cdot )}\chi _{\lbrack 2^{-v},2^{1-v}]}%
\Big)_{v}\right\Vert _{\ell _{>}^{q(\cdot )}(L^{q(\cdot )})}.
\end{equation*}%
Then%
\begin{equation}
\left\Vert f\right\Vert _{\boldsymbol{B}_{p(\cdot ),q(\cdot )}^{\alpha
(\cdot )}}\approx \left\Vert f\right\Vert _{\boldsymbol{B}_{p(\cdot
),q(\cdot )}^{\alpha (\cdot )}}^{\ast }.  \label{new-norm1}
\end{equation}
\end{rem}

Now we present the main result in this section.

\begin{thm}
\label{equi-norm}\textit{Let }$\{\mathcal{F}\Phi ,\mathcal{F}\varphi \}$%
\textit{\ be a resolution of unity, }$p\in \mathcal{P}^{\log }\left( 
%TCIMACRO{\U{211d} }%
%BeginExpansion
\mathbb{R}
%EndExpansion
^{n}\right) $, $q\in \mathcal{P}\left( 
%TCIMACRO{\U{211d} }%
%BeginExpansion
\mathbb{R}
%EndExpansion
\right) $ and $\alpha \in C_{\mathrm{loc}}^{\log }\left( 
%TCIMACRO{\U{211d} }%
%BeginExpansion
\mathbb{R}
%EndExpansion
^{n}\right) $.\textit{\ }The Besov space $\boldsymbol{B}_{p(\cdot
),q(0)}^{\alpha (\cdot )}$\ is the collection of all $f\in \mathcal{S}%
^{\prime }(\mathbb{R}^{n})$\ such that 
\begin{equation*}
\left\Vert f\right\Vert _{\boldsymbol{B}_{p(\cdot ),q(0)}^{\alpha (\cdot
)}}:=\left\Vert \Phi \ast f\right\Vert _{p(\cdot )}+\Big(\int_{0}^{1}\left%
\Vert t^{-\alpha (\cdot )}(\varphi _{t}\ast f)\right\Vert _{p(\cdot )}^{q(0)}%
\frac{dt}{t}\Big)^{\frac{1}{q(0)}}<\infty .
\end{equation*}%
\textit{Let }$q$ be \emph{$\log $}-H\"{o}lder continuous at the origin. Then%
\begin{equation*}
\boldsymbol{B}_{p(\cdot ),q(\cdot )}^{\alpha (\cdot )}=\boldsymbol{B}%
_{p(\cdot ),q(0)}^{\alpha (\cdot )},
\end{equation*}%
with equivalent norms.
\end{thm}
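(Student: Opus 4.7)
The strategy is to invoke the discretisation \eqref{new-norm1} and then reduce the statement to a direct comparison of the $q(t)$- and $q(0)$-modulars on each dyadic interval, in the spirit of Lemma \ref{Key-lemma1.1}. Setting $F(t):=\bigl\|t^{-\alpha(\cdot)}(\varphi_{t}\ast f)\bigr\|_{p(\cdot)}$, the discretisation \eqref{new-norm1} gives, modulo the common low-frequency term $\|\Phi\ast f\|_{p(\cdot)}$,
\[
\|f\|_{\boldsymbol{B}_{p(\cdot),q(\cdot)}^{\alpha(\cdot)}}\approx \Bigl\|\bigl(t^{-1/q(t)}F(t)\chi_{[2^{-v},2^{1-v}]}\bigr)_{v}\Bigr\|_{\ell_{>}^{q(\cdot)}(L^{q(\cdot)})},
\]
whereas by definition
\[
\|f\|_{\boldsymbol{B}_{p(\cdot),q(0)}^{\alpha(\cdot)}}-\|\Phi\ast f\|_{p(\cdot)}=\Bigl(\sum_{v=1}^{\infty}\int_{2^{-v}}^{2^{1-v}}F(t)^{q(0)}\,\frac{dt}{t}\Bigr)^{1/q(0)}.
\]
The proof is therefore complete as soon as these two quantities are shown to be equivalent.

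To establish this equivalence I would repeat, \emph{mutatis mutandis}, the two-step argument of Lemma \ref{Key-lemma1.1}, replacing the $t$-independent scalar $\|f_{v}\|_{p(\cdot)}$ by the continuous function $F(t)$. After the usual normalisation, the whole problem reduces to the single-scale estimate
\[
\int_{2^{-v}}^{2^{1-v}}F(t)^{q(t)}\,\frac{dt}{t}\approx \delta_{v},\qquad \delta_{v}:=\int_{2^{-v}}^{2^{1-v}}F(t)^{q(0)}\,\frac{dt}{t}+2^{-v},
\]
summed in $v\in\mathbb{N}$. Because $\delta_{v}\geq 2^{-v}$, the log-Hölder continuity of $q$ at the origin yields $\delta_{v}^{(q(0)-q(t))/q(0)}\approx 1$ uniformly for $t\in[2^{-v},2^{1-v}]$, which is exactly the exponent-swap mechanism that drives Step 1 of the proof of Lemma \ref{Key-lemma1.1}. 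The converse direction is handled in the same spirit, invoking Lemma \ref{DHHR-estimate} with weight $w(t)=1/t$ on the interval $(2^{-v},2^{1-v})$ to obtain the necessary Jensen-type control, exactly as in Step 2 there. Summation in $v$ together with $\sum_{v}2^{-v}<\infty$ then delivers the desired two-sided bound.

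The main technical obstacle is that in Lemma \ref{Key-lemma1.1} one has the pointwise inequality $\delta_{v}^{-1/q(t)}\|f_{v}\|_{p(\cdot)}\lesssim 1$ for every $t\in[2^{-v},2^{1-v}]$, and this is precisely where the independence of $\|f_{v}\|_{p(\cdot)}$ from $t$ is used. No such pointwise control is available for the varying function $F(t)$. To circumvent this, one must work in integrated form from the outset: split $[2^{-v},2^{1-v}]$ according as $F(t)$ is small or large compared to $\delta_{v}^{1/q(0)}$; on the small-value region the $q(t)$-integrand is dominated by $1$ and its contribution over the fixed $dt/t$-length interval is absorbed into the $2^{-v}$ decay after renormalisation, while on the large-value region the log-Hölder continuity at the origin produces $F(t)^{q(t)-q(0)}\lesssim 1$ directly. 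The limiting case $q^{+}=\infty$ is treated in the same fashion using the $L^{\infty}$-form of the mixed-sequence modular, exactly as in the proof of Lemma \ref{Key-lemma1.1}.
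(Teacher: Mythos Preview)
Your splitting argument on the large-value region does not close. You claim that on $\{t\in[2^{-v},2^{1-v}]:F(t)>\delta_v^{1/q(0)}\}$ the log-H\"older continuity of $q$ at the origin gives $F(t)^{q(t)-q(0)}\lesssim 1$. When $q(t)\le q(0)$ this is fine, since the lower bound $F(t)\ge 2^{-v/q(0)}$ together with $|q(t)-q(0)|\lesssim 1/v$ suffices. But when $q(t)>q(0)$ you need an \emph{upper} bound on $F(t)$ of the form $F(t)\le C^{v}$, and no such bound is available. Indeed, a black-box nonnegative function $F$ with $F(t)=2^{v^{2}}$ on a subset of $[2^{-v},2^{1-v}]$ of $dt/t$-measure $2^{-q(0)v^{2}-v}$ satisfies $\sum_{v}\int_{2^{-v}}^{2^{1-v}}F(t)^{q(0)}\,dt/t\le 1$, yet for $q(t)=q(0)+c/\log(e+1/t)$ one computes $\int_{2^{-v}}^{2^{1-v}}F(t)^{q(t)}\,dt/t\gtrsim 2^{(c/\log 2-1)v}\to\infty$. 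So a purely measure-theoretic comparison of the $q(t)$- and $q(0)$-modulars of the \emph{same} $F$ cannot work.

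What the paper does---and what your sketch is missing---is to exploit the Fourier-analytic structure of $F(t)=\|t^{-\alpha(\cdot)}(\varphi_t\ast f)\|_{p(\cdot)}$. A second resolution of unity $\{\mathcal{F}\Psi,\mathcal{F}\psi\}$ together with the Calder\'on identity gives, for $t\in[2^{-v},2^{1-v}]$ with $v\ge 3$,
\[
\varphi_t\ast f=\int_{t/4}^{4t}\varphi_t\ast\psi_\tau\ast f\,\frac{d\tau}{\tau},
\]
whence $F(t)\lesssim\int_{t/4}^{4t}\|\tau^{-\alpha(\cdot)}(\psi_\tau\ast f)\|_{p(\cdot)}\,d\tau/\tau$ pointwise (using Lemma~\ref{DHR-lemma} and boundedness of convolution with radially decreasing $L^1$-functions on $L^{p(\cdot)}$). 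Setting $\delta:=\int_{2^{-v-2}}^{2^{3-v}}\|\tau^{-\alpha(\cdot)}(\psi_\tau\ast f)\|_{p(\cdot)}^{q(0)}\,d\tau/\tau+2^{-v}$ (note the enlarged interval), H\"older's inequality now yields the genuine pointwise bound $\delta^{-1/q(0)}F(t)\lesssim 1$; combined with $\delta^{-1/q(t)}\approx\delta^{-1/q(0)}$ this gives $\int_{2^{-v}}^{2^{1-v}}F(t)^{q(t)}\,dt/t\lesssim\delta$, and summation in $v$ finishes (the overlap of the enlarged intervals is bounded). The reverse embedding is handled symmetrically. In short, the smearing via the reproducing formula is not a cosmetic device: it is precisely what turns the intractable pointwise quantity $F(t)$ into an average to which the Jensen/log-H\"older machinery applies.
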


\begin{proof} We divide the proof into two steps. \\

\textit{Step 1.} We prove that $\boldsymbol{B}_{p(\cdot
),q(0)}^{\alpha (\cdot )}\hookrightarrow \boldsymbol{B}_{p(\cdot
),q(\cdot )}^{\alpha (\cdot
)}$, which is equivalent to%
\begin{equation*}
\left\Vert f\right\Vert _{\boldsymbol{B}_{p(\cdot ),q(\cdot
)}^{\alpha (\cdot )}}\lesssim \left\Vert f\right\Vert
_{\boldsymbol{B}_{p(\cdot ),q(0)}^{\alpha (\cdot )}}
\end{equation*}
for any $f\in \boldsymbol{B}_{p(\cdot ),q(0)}^{\alpha (\cdot )}$. By
the scaling argument, it suffices to consider the case $\left\Vert
f\right\Vert _{\boldsymbol{B}_{p(\cdot ),q(0)}^{\alpha (\cdot )}}=1$
and show that the modular of $f$ on the left-hand side is bounded.
In
particular, we show that%
\begin{equation*}
S:=\int_{0}^{1}\left\Vert t^{-\alpha (\cdot )}(\varphi _{t}\ast
f)\right\Vert _{p(\cdot )}^{q(t)}\frac{dt}{t}\lesssim 1.
\end{equation*}%
We write%
\begin{equation*}
S=\int_{0}^{\frac{1}{4}}\left\Vert t^{-\alpha (\cdot )}(\varphi
_{t}\ast f)\right\Vert _{p(\cdot )}^{q(t)}\frac{dt}{t}
+\int_{\frac{1}{4}}^{1}\left\Vert t^{-\alpha (\cdot )}(\varphi
_{t}\ast f)\right\Vert _{p(\cdot )}^{q(t)} \frac{dt}{t}=: K+M.
\end{equation*}%
To prove that $M\lesssim 1$, let $\left\{ \mathcal{F}\Psi
,\mathcal{F}\psi \right\} $\ be a resolution of
unity. We find that for any $\frac{1}{4}\leq t\leq 1$\textit{\ }%
\begin{equation*}
\varphi _{t}\ast f=\int_{t/4}^{1}\varphi _{t}\ast \psi _{\tau }\ast f\frac{%
d\tau }{\tau }+\varphi _{t}\ast \Psi \ast f.
\end{equation*}%
Using Lemma \ref{DHR-lemma}, we obtain%
\begin{equation}
t^{-\alpha (x)}\lesssim (1+t^{-1}\left\vert x-y\right\vert
)^{c_{\log }(\alpha )}t^{-\alpha (y)}\lesssim (1+t^{-1}\left\vert
x-y\right\vert )^{c_{\log }(\alpha )}\tau ^{-\alpha (y)}
\label{est-t-and-tau}
\end{equation}%
for any  $\tau \in \lbrack \tfrac{t}{4},\min (1,4t)] $ and $\varphi
\in \mathcal{S}(\mathbb{R}^{n})$, and hence, we get
\begin{equation} \label{EQ:pp}
\left\Vert t^{-\alpha (\cdot )}(\varphi _{t}\ast \psi _{\tau }\ast
f)\right\Vert _{p(\cdot )}\lesssim \left\Vert t^{-\alpha (\cdot
)}(\eta _{t,N}\ast \psi _{\tau }\ast f)\right\Vert _{p(\cdot
)}\lesssim \left\Vert \tau ^{-\alpha (\cdot )}(\psi _{\tau }\ast
f)\right\Vert _{p(\cdot )}
\end{equation}%
for any $   \frac{1}{4}\leq t\leq 1 $, where we used the fact that
the convolution with a radially decreasing $L^{1}$-function is
bounded on $L^{p(\cdot )}$ (by taking $N>0$ large enough).
Similarly, we obtain%
\begin{equation*}
\left\Vert t^{-\alpha (\cdot )}(\varphi _{t}\ast \Psi \ast
f)\right\Vert _{p(\cdot )}\lesssim \left\Vert \Psi \ast f\right\Vert
_{p(\cdot )},\quad \frac{1}{4}\leq t\leq 1.
\end{equation*}%
By using \eqref{EQ:pp} and H\"{o}lder's inequality, we see that
\begin{align*}
\int_{t/4}^{1}\left\Vert t^{-\alpha (\cdot )}(\varphi _{t}\ast \psi
_{\tau }\ast f)\right\Vert _{p(\cdot )}\frac{d\tau }{\tau }
&\lesssim \int_{t/4}^{1}\left\Vert \tau ^{-\alpha (\cdot )}(\psi
_{\tau }\ast
f)\right\Vert _{p(\cdot )}\frac{d\tau }{\tau } \\
&\lesssim \left(\int_{1/16}^{1}\left\Vert \tau ^{-\alpha (\cdot
)}(\psi
_{\tau }\ast f)\right\Vert _{p(\cdot )}^{q(0)}\frac{d\tau }{\tau }\right)^{%
\frac{1}{q(0)}} \\
&\leq 1
\end{align*}%
for any $\frac{1}{4}\leq t\leq 1$. Therefore, we get
\begin{equation*}
M\lesssim \sup_{t\in \lbrack \frac{1}{4},1]}\left\Vert t^{-\alpha
(\cdot )}(\varphi _{t}\ast f)\right\Vert _{p(\cdot )}^{q(t)}\lesssim
1\text{.}
\end{equation*}%

Now we estimate $K$. We write
\begin{equation*}
K=\sum_{v=3}^{\infty }\int_{2^{-v}}^{2^{1-v}}\left\Vert t^{-\alpha
(\cdot )}(\varphi _{t}\ast f)\right\Vert _{p(\cdot
)}^{q(t)}\frac{dt}{t}.
\end{equation*}%
Let us prove that%
\begin{equation}
\int_{2^{-v}}^{2^{1-v}}\left\Vert t^{-\alpha (\cdot )}(\varphi
_{t}\ast f)\right\Vert _{p(\cdot )}^{q(t)}\frac{dt}{t}\lesssim
\int_{2^{-v-2}}^{2^{3-v}}\left\Vert \tau ^{-\alpha (\cdot )}(\psi
_{\tau }\ast f)\right\Vert _{p(\cdot )}^{q(0)}\frac{d\tau }{\tau
}+2^{-v}=\delta \label{fun-est}
\end{equation}%
for any $v\geq 3$. This claim can be reformulated as showing that%
\begin{equation*}
\int_{2^{-v}}^{2^{1-v}}\left(\delta ^{-\frac{1}{q(t)}}\left\Vert
t^{-\alpha
(\cdot )}(\varphi _{t}\ast f)\right\Vert _{p(\cdot )}\right)^{q(t)}\frac{dt}{t}%
\lesssim 1.
\end{equation*}%
We need only to show that%
\begin{equation*}
\delta ^{-\frac{1}{q(t)}}\left\Vert t^{-\alpha (\cdot )}(\varphi
_{t}\ast f)\right\Vert _{p(\cdot )}\lesssim 1.
\end{equation*}%
As before, we find that for any $v\geq 3$ and $t\in \lbrack 2^{-v},2^{1-v}]$%
\begin{equation*}
\varphi _{t}\ast f=\int_{t/4}^{4t}\varphi _{t}\ast \psi _{\tau }\ast f\frac{%
d\tau }{\tau }.
\end{equation*}%
Using $\mathrm{\eqref{est-t-and-tau}}$ and the fact that the
convolution with a
radially decreasing $L^{1}$-function is bounded on $L^{p(\cdot )}$, we obtain%
\begin{equation*}
\left\Vert t^{-\alpha (\cdot )}(\varphi _{t}\ast \psi _{\tau }\ast
f)\right\Vert _{p(\cdot )}\lesssim \left\Vert t^{-\alpha (\cdot
)}(\eta _{t,N}\ast \psi _{\tau }\ast f)\right\Vert _{p(\cdot
)}\lesssim \left\Vert \tau ^{-\alpha (\cdot )}(\psi _{\tau }\ast
f)\right\Vert _{p(\cdot )}\text{.}
\end{equation*}%
Since $q$ is \emph{$\log $-}H\"{o}lder continuous at the origin, we
find
that%
\begin{equation*}
\delta ^{-\frac{1}{q(t)}}\approx \delta ^{-\frac{1}{q(0)}}\text{, }v\geq 3%
\text{ and }t\in \lbrack 2^{-v},2^{1-v}]
\end{equation*}%
with constants independent of $t$ and $v$. Therefore, by taking $N$
large enough and H\"{o}lder's inequality we get
\begin{eqnarray*}
\delta ^{-\frac{1}{q(t)}}\left\Vert t^{-\alpha (\cdot )}(\varphi
_{t}\ast
f)\right\Vert _{p(\cdot )} &\lesssim &\int_{t/4}^{4t}\delta ^{-\frac{1}{q(0)}%
}\left\Vert \tau ^{-\alpha (\cdot )}(\psi _{\tau }\ast f)\right\Vert
_{p(\cdot )}\frac{d\tau }{\tau } \\
&\lesssim &\Big(\int_{t/4}^{4t}\big(\delta
^{-\frac{1}{q(0)}}\left\Vert \tau
^{-\alpha (\cdot )}(\psi _{\tau }\ast f)\right\Vert _{p(\cdot )}\big)^{q(0)}%
\frac{d\tau }{\tau }\Big)^{\frac{1}{q(0)}} \\
&\lesssim &\int_{2^{-v-2}}^{2^{3-v}}\big(\delta
^{-\frac{1}{q(0)}}\left\Vert
\tau ^{-\alpha (\cdot )}(\psi _{\tau }\ast f)\right\Vert _{p(\cdot )}\big)%
^{q(0)}\frac{d\tau }{\tau } \\
&\lesssim &1,
\end{eqnarray*}%
which follows immediately from the definition of $\delta $.
Thus we conclude from \eqref{fun-est} that $K\lesssim 1$.\\

\textit{Step. 2.} We will prove that $\boldsymbol{B}_{p(\cdot
),q(\cdot )}^{\alpha (\cdot )}\hookrightarrow
\boldsymbol{B}_{p(\cdot ),q(0)}^{\alpha
(\cdot )}$, which is equivalent to%
\begin{equation*}
\left\Vert f\right\Vert _{\boldsymbol{B}_{p(\cdot ),q(0)}^{\alpha
(\cdot )}}\lesssim \left\Vert f\right\Vert _{\boldsymbol{B}_{p(\cdot
),q(\cdot )}^{\alpha (\cdot )}}
\end{equation*}
for any $f\in \boldsymbol{B}_{p(\cdot ),q(\cdot )}^{\alpha (\cdot
)}$. By
the scaling argument, we see that it suffices to consider the case $%
\left\Vert f\right\Vert _{\boldsymbol{B}_{p(\cdot ),q(\cdot
)}^{\alpha (\cdot )}}=1$ and show that the modular of $f$ on the
left-hand side is
bounded. In particular, we show that%
\begin{equation*}
H:=\int_{0}^{1}\left\Vert t^{-\alpha (\cdot )}(\varphi _{t}\ast
f)\right\Vert _{p(\cdot )}^{q(0)}\frac{dt}{t}\lesssim 1.
\end{equation*}%
We write%
\begin{equation*}
H=\int_{0}^{\frac{1}{4}}\left\Vert t^{-\alpha (\cdot )}(\varphi
_{t}\ast
f)\right\Vert _{p(\cdot )}^{q(0)} \frac{dt}{t}+\int_{\frac{1}{4}%
}^{1}\left\Vert t^{-\alpha (\cdot )}(\varphi _{t}\ast f)\right\Vert
_{p(\cdot )}^{q(0)} \frac{dt}{t}=:T+D.
\end{equation*}
We use the same notations as in Step 1. We find that
\begin{eqnarray*}
\int_{t/4}^{1}\left\Vert t^{-\alpha (\cdot )}(\varphi _{t}\ast \psi
_{\tau }\ast f)\right\Vert _{p(\cdot )}\frac{d\tau }{\tau }
&\lesssim &\int_{1/16}^{1}\left\Vert \tau ^{-\alpha (\cdot )}(\psi
_{\tau }\ast
f)\right\Vert _{p(\cdot )}\frac{d\tau }{\tau } \\
&\lesssim &\left\Vert f\right\Vert _{\boldsymbol{B}_{p(\cdot
),q(\cdot )}^{\alpha (\cdot )}}\left\Vert \chi _{\lbrack
1/16,1]}\right\Vert
_{q^{\prime }(\cdot )} \\
&\leq &1
\end{eqnarray*}%
for any $\frac{1}{4}\leq t\leq 1$. Therefore, we get $D\lesssim 1$.
To estimate $T$ we need only to prove
\begin{equation*}
\int_{2^{-v}}^{2^{1-v}}\left\Vert t^{-\alpha (\cdot )}(\varphi
_{t}\ast f)\right\Vert _{p(\cdot )}^{q(0)}\frac{dt}{t}\lesssim
\int_{2^{-v-2}}^{2^{3-v}}\left\Vert \tau ^{-\alpha (\cdot )}(\psi
_{\tau }\ast f)\right\Vert _{p(\cdot )}^{q(\tau )}\frac{d\tau }{\tau
}+2^{-v}=\delta
\end{equation*}%
for any $v\geq 3$. This estimate can be obtained by repeating the
above arguments. The proof of Theorem \ref{equi-norm} is complete.
\end{proof}

Lte $a>0$, $\alpha :\mathbb{R}^{n}\rightarrow \mathbb{R}$ and $f\in \mathcal{%
S}^{\prime }(\mathbb{R}^{n})$. Then we define the Peetre maximal function as
follows: 
\begin{equation*}
\varphi _{t}^{\ast ,a}t^{-\alpha (\cdot )}f(x):=\sup_{y\in \mathbb{R}^{n}}%
\frac{t^{-\alpha (y)}\left\vert \varphi _{t}\ast f(y)\right\vert }{\left(
1+t^{-1}\left\vert x-y\right\vert \right) ^{a}},\qquad t>0
\end{equation*}%
and%
\begin{equation*}
\Phi ^{\ast ,a}(x):=\sup_{y\in \mathbb{R}^{n}}\frac{\left\vert \Phi \ast
f(y)\right\vert }{\left( 1+\left\vert x-y\right\vert \right) ^{a}}.
\end{equation*}%
We now present a fundamental characterization of spaces under consideration.

\begin{thm}
\label{fun-char}Let $\alpha \in C_{\mathrm{loc}}^{\log }\left( 
%TCIMACRO{\U{211d} }%
%BeginExpansion
\mathbb{R}
%EndExpansion
^{n}\right) $, $p\in \mathcal{P}^{\log }\left( 
%TCIMACRO{\U{211d} }%
%BeginExpansion
\mathbb{R}
%EndExpansion
^{n}\right) $ and $a>\frac{n}{p^{-}}$. \textit{Let }$q\in \mathcal{P}\left( 
%TCIMACRO{\U{211d} }%
%BeginExpansion
\mathbb{R}
%EndExpansion
\right) $ be \emph{$\log $}-H\"{o}lder continuous at the origin. Let $\{%
\mathcal{F}\Phi ,\mathcal{F}\varphi \}$ be a resolution of unity. Then%
\begin{equation}
\left\Vert f\right\Vert _{\boldsymbol{B}_{p(\cdot ),q(\cdot )}^{\alpha
(\cdot )}}^{\blacktriangledown }:=\left\Vert \Phi ^{\ast ,a}f\right\Vert
_{p(\cdot )}+\left\Vert \left\Vert \varphi _{t}^{\ast ,a}t^{-\alpha (\cdot
)}f\right\Vert _{p(\cdot )}\right\Vert _{L^{q(\cdot )}((0,1],\frac{dt}{t})}
\label{B-equinorm1}
\end{equation}%
\textit{is the equivalent norm in }$\boldsymbol{B}_{p(\cdot ),q(\cdot
)}^{\alpha (\cdot )}$.
\end{thm}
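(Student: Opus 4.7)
The plan is to prove the equivalence $\|f\|_{\boldsymbol{B}^{\alpha(\cdot)}_{p(\cdot),q(\cdot)}} \approx \|f\|^{\blacktriangledown}_{\boldsymbol{B}^{\alpha(\cdot)}_{p(\cdot),q(\cdot)}}$ by treating the two directions separately. The direction $\|f\|_{\boldsymbol{B}^{\alpha(\cdot)}_{p(\cdot),q(\cdot)}} \lesssim \|f\|^{\blacktriangledown}_{\boldsymbol{B}^{\alpha(\cdot)}_{p(\cdot),q(\cdot)}}$ is immediate: taking $y=x$ in the suprema defining $\Phi^{\ast,a}f(x)$ and $\varphi_t^{\ast,a}t^{-\alpha(\cdot)}f(x)$ makes the denominators equal to $1$, so that $|\Phi\ast f(x)|\le \Phi^{\ast,a}f(x)$ and $t^{-\alpha(x)}|\varphi_t\ast f(x)|\le \varphi_t^{\ast,a}t^{-\alpha(\cdot)}f(x)$ hold pointwise; the $L^{p(\cdot)}$- and $L^{q(\cdot)}((0,1],dt/t)$-norms inherit the inequality.

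For the reverse inequality I plan to adapt the Rychkov--Ullrich scheme to the continuous variable-exponent setting. Fix an auxiliary resolution of unity $\{\mathcal{F}\Psi,\mathcal{F}\psi\}$. As in the proof of Theorem \ref{14}, for $0<t\le 1$ and $y\in\mathbb{R}^n$ we have
\begin{equation*}
\varphi_t\ast f(y)=\int_{t/4}^{\min(1,4t)}\varphi_t\ast\psi_\tau\ast f(y)\,\frac{d\tau}{\tau}+\chi_{[1/4,1]}(t)\,\varphi_t\ast\Psi\ast f(y),
\end{equation*}
and analogously for $\Phi\ast f$. Choose $r$ with $n/a<r<p^-$, which is possible because $a>n/p^-$. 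Lemma \ref{r-trick} (applied with $\theta=\varphi$, $\omega=\psi$ at scale $\tau$, using $\mathrm{supp}\,\mathcal{F}\varphi\subset\{|\xi|<2\}$) gives
\begin{equation*}
|\varphi_t\ast\psi_\tau\ast f(y)|\lesssim \bigl(\eta_{\tau,m}\ast|\psi_\tau\ast f|^r(y)\bigr)^{1/r}
\end{equation*}
for any $m$. Using $t\approx\tau$ on the integration range and invoking Lemma \ref{DHR-lemma} (with $r\alpha$ in place of $\alpha$) to push the factor $\tau^{-\alpha(y)}$ inside the convolution, I would obtain
\begin{equation*}
t^{-\alpha(y)}|\varphi_t\ast\psi_\tau\ast f(y)|\lesssim \bigl(\eta_{\tau,m'}\ast(\tau^{-\alpha(\cdot)}|\psi_\tau\ast f|)^r(y)\bigr)^{1/r}.
\end{equation*}
Dividing by $(1+t^{-1}|x-y|)^a$, taking the supremum in $y$, and applying the two-point inequality $1+t^{-1}|x-z|\le(1+t^{-1}|x-y|)(1+t^{-1}|y-z|)$ together with $m'-ar>n$ yields
\begin{equation*}
\varphi_t^{\ast,a}t^{-\alpha(\cdot)}f(x)\lesssim\int_{t/4}^{\min(1,4t)}\bigl(\eta_{\tau,m''}\ast(\tau^{-\alpha(\cdot)}|\psi_\tau\ast f|)^r(x)\bigr)^{1/r}\frac{d\tau}{\tau}+\chi_{[1/4,1]}(t)\,\Xi(x),
\end{equation*}
where $\Xi$ denotes the analogous endpoint contribution from $\Psi\ast f$.

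To conclude, I would take the $L^{p(\cdot)}$-norm in $x$ and use the boundedness of convolution with the radially decreasing $L^1$-function $\eta_{\tau,m''}$ on $L^{p(\cdot)/r}$ (applicable because $r<p^-$, so that $p(\cdot)/r\in\mathcal{P}^{\log}$), which reduces the expression to $\|\tau^{-\alpha(\cdot)}\psi_\tau\ast f\|_{p(\cdot)}$. Taking the $L^{q(\cdot)}((0,1],dt/t)$-norm and invoking Lemma \ref{lq-inequality}, which is precisely the continuous variable-exponent Hardy-type inequality needed to exchange the $t$-average over $[t/4,\min(1,4t)]$ for an integral in $\tau$, then gives $\|f\|^{\blacktriangledown}_{\boldsymbol{B}^{\alpha(\cdot)}_{p(\cdot),q(\cdot)}}\lesssim \|f\|^{\Psi,\psi}_{\boldsymbol{B}^{\alpha(\cdot)}_{p(\cdot),q(\cdot)}}$, and Theorem \ref{14} identifies the right-hand side with $\|f\|_{\boldsymbol{B}^{\alpha(\cdot)}_{p(\cdot),q(\cdot)}}$. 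The main obstacle will be the simultaneous management of the three variable objects $p$, $q$, $\alpha$ while passing from the $t$-scale inside the Peetre supremum to the $\tau$-scale inside the maximal-type estimates: variable smoothness is absorbed via Lemma \ref{DHR-lemma} only after choosing the decay parameter large enough, variable integrability forces the range $n/a<r<p^-$ (which is exactly where the hypothesis $a>n/p^-$ enters), and the continuous $L^{q(\cdot)}$-averaging requires the substitute Hardy inequality of Lemma \ref{lq-inequality} in place of its classical discrete counterpart.
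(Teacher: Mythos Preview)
Your argument is correct, but it takes a longer route than the paper's. The paper does not introduce any auxiliary resolution $\{\Psi,\psi\}$ and does not decompose $\varphi_t\ast f$ as a $\tau$-integral. Instead it exploits directly that $\varphi_t\ast f$ is band-limited at scale $t$: Lemma~\ref{r-trick} applied with $r=p^-$ to $\varphi_t\ast f$ itself, followed by Lemma~\ref{DHR-lemma} to pull $t^{-\alpha(y)}$ inside the convolution, gives the single pointwise estimate
\[
\varphi_t^{\ast,a}t^{-\alpha(\cdot)}f(x)\le C\bigl(\eta_{t,ap^-}\ast(t^{-\alpha(\cdot)}|\varphi_t\ast f|)^{p^-}(x)\bigr)^{1/p^-},
\]
after which one merely takes the $L^{p(\cdot)}$-norm (using boundedness of convolution on $L^{p(\cdot)/p^-}$, valid since $ap^->n$) and then the $L^{q(\cdot)}((0,1],dt/t)$-norm. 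No $\tau$-integration appears, so neither Lemma~\ref{lq-inequality} nor Theorem~\ref{14} is needed. Your detour through $\{\Psi,\psi\}$ and the continuous Hardy inequality is really the machinery of the local-means theorem (Theorem~\ref{loc-mean-char}), where the kernels $k_0,k$ are \emph{not} band-limited and one genuinely must pass through a Calder\'on reproducing formula; for the present theorem, where $\varphi$ already has compact Fourier support, that machinery is unnecessary overhead. The trade-off is that your argument would survive if $\varphi$ lacked compact Fourier support, while the paper's would not---but that generality is exactly what Theorem~\ref{loc-mean-char} is for.
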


\begin{proof} It is easy to see that for any $f\in \mathcal{S}^{\prime }(%
\mathbb{R}^{n})$ with $\left\Vert f\right\Vert
_{\boldsymbol{B}_{p(\cdot ),q(\cdot )}^{\alpha (\cdot
)}}^{\blacktriangledown }<\infty $ and any $x\in
\mathbb{R}^{n}$ we have%
\begin{equation*}
t^{-\alpha (x)}\left\vert \varphi _{t}\ast f(x)\right\vert \leq
\varphi _{t}^{\ast ,a}t^{-\alpha (\cdot )}f(x)\text{.}
\end{equation*}%
This shows that the right-hand side in $\mathrm{\eqref{B-def}}$ is
less than or equal $\mathrm{\eqref{B-equinorm1}}$.\\

We will prove that there is a constant $C>0$ such that for every
$f\in
\boldsymbol{B}_{p(\cdot ),q(\cdot )}^{\alpha (\cdot )}$%
\begin{equation}
\left\Vert f\right\Vert _{\boldsymbol{B}_{p(\cdot ),q(\cdot
)}^{\alpha
(\cdot )}}^{\blacktriangledown }\leq C\left\Vert f\right\Vert _{\boldsymbol{B%
}_{p(\cdot ),q(\cdot )}^{\alpha (\cdot )}}.  \label{estimate-B}
\end{equation}%
By Lemmas \ref{DHR-lemma} and \ref{r-trick}, the estimate
\begin{eqnarray}
t^{-\alpha (y)}\left\vert \varphi _{t}\ast f(y)\right\vert &\leq
&C_{1}\text{ }t^{-\alpha (y)}\left( \eta _{t,\sigma p^{-}}\ast
|\varphi _{t}\ast
f|^{p^{-}}(y)\right) ^{1/p^{-}}  \notag \\
&\leq &C_{2}\text{ }\left( \eta _{t,(\sigma -c_{\log }(\alpha
))p^{-}}\ast (t^{-\alpha (\cdot )}|\varphi _{t}\ast
f|)^{p^{-}}(y)\right) ^{1/p^{-}} \label{esti-conv}
\end{eqnarray}%
is true for any $y\in \mathbb{R}^{n}$, $\sigma >n/p^{-}$ and $t>0$.
Now dividing both sides of $\mathrm{\eqref{esti-conv}}$ by $\left(
1+t^{-1}\left\vert x-y\right\vert \right) ^{a}$, in the right-hand
side we
use the inequality%
\begin{equation*}
\left( 1+t^{-1}\left\vert x-y\right\vert \right) ^{-a}\leq \left(
1+t^{-1}\left\vert x-z\right\vert \right) ^{-a}\left(
1+t^{-1}\left\vert y-z\right\vert \right) ^{a},\quad x,y,z\in
\mathbb{R}^{n},
\end{equation*}%
while in the left-hand side we take the supremum over $y\in
\mathbb{R}^{n}$, we find that
for all $f\in \boldsymbol{B}_{p(\cdot ),q(\cdot )}^{\alpha (\cdot )}$ any $%
t>0$ and any $\sigma >\max (n/p^{-},a+c_{\log }(\alpha ))$%
\begin{equation*}
\varphi _{t}^{\ast ,a}t^{-\alpha (\cdot )}f(x)\leq C_{2}\text{
}\left( \eta _{t,ap^{-}}\ast (t^{-\alpha (\cdot )}|\varphi _{t}\ast
f|^{p^{-}})(x)\right) ^{1/p^{-}},
\end{equation*}%
where $C_{2}>0$ is independent of $x,t$ and $f$. Applying the $L^{p(\cdot )}$%
-norm and using the fact the convolution with a radially decreasing $L^{1}$%
-function is bounded on $L^{p(\cdot )}$, we get%
\begin{equation*}
\left\Vert \varphi _{t}^{\ast ,a}t^{-\alpha (\cdot
)-1/q(t)}f\right\Vert _{p(\cdot )}\lesssim \left\Vert t^{-\alpha
(\cdot )-1/q(t)}\varphi _{t}\ast f\right\Vert _{p(\cdot )},\quad
t\in (0,1].
\end{equation*}%
Taking $L^{q(\cdot )}((0,1])$-norm, we conclude the desired
estimate. The proof of Theorem \ref{fun-char} is complete.
\end{proof}

In order to formulate the main result of this section, let us consider $%
k_{0},k\in \mathcal{S}(\mathbb{R}^{n})$ and $S\geq -1$ an integer such that
for an $\varepsilon >0$%
\begin{align}
\left\vert \mathcal{F}k_{0}(\xi )\right\vert & >0\text{\quad for\quad }%
\left\vert \xi \right\vert <2\varepsilon,  \label{T-cond1} \\
\left\vert \mathcal{F}k(\xi )\right\vert & >0\text{\quad for\quad }\frac{%
\varepsilon }{2}<\left\vert \xi \right\vert <2\varepsilon  \label{T-cond2}
\end{align}%
and%
\begin{equation}
\int_{\mathbb{R}^{n}}x^{\alpha }k(x)dx=0\text{\quad for any\quad }\left\vert
\alpha \right\vert \leq S.  \label{moment-cond}
\end{equation}%
Here $\mathrm{\eqref{T-cond1}}$ and $\mathrm{\eqref{T-cond2}}$\ are
Tauberian conditions, while $\mathrm{\eqref{moment-cond}}$ states that
moment conditions on $k$. We recall the notation%
\begin{equation*}
k_{t}(x):=t^{-n}k(t^{-1}x)\text{\quad for}\quad t>0.
\end{equation*}%
For any $a>0$, $f\in \mathcal{S}^{\prime }(\mathbb{R}^{n})$ and $x\in 
\mathbb{R}^{n}$ we denote%
\begin{equation}
k_{t}^{\ast ,a}t^{-\alpha \left( \cdot \right) }f(x):=\sup_{y\in \mathbb{R}%
^{n}}\frac{t^{-\alpha \left( y\right) }\left\vert k_{t}\ast f(y)\right\vert 
}{\left( 1+t^{-1}\left\vert x-y\right\vert \right) ^{a}},\quad j\in \mathbb{N%
}_{0}.  \label{Peetre-m-f}
\end{equation}%
Usually $k_{t}\ast f$ is called local mean.\vskip5pt

We are now able to state the so called local mean characterization of $%
\boldsymbol{B}_{p(\cdot ),q(\cdot )}^{\alpha (\cdot )}$ spaces.\vskip5pt

\begin{thm}
\label{loc-mean-char}Let $\alpha \in C_{\mathrm{loc}}^{\log }\left( 
%TCIMACRO{\U{211d} }%
%BeginExpansion
\mathbb{R}
%EndExpansion
^{n}\right) $ and $p\in \mathcal{P}^{\log }\left( 
%TCIMACRO{\U{211d} }%
%BeginExpansion
\mathbb{R}
%EndExpansion
^{n}\right) $. \textit{Let }$q\in \mathcal{P}\left( 
%TCIMACRO{\U{211d} }%
%BeginExpansion
\mathbb{R}
%EndExpansion
\right) $ be \emph{$\log $}-H\"{o}lder continuous at the origin\textit{, }$a>%
\frac{n}{p^{-}}$\textit{\ and }$\alpha ^{+}<S+1$\textit{. Then}%
\begin{equation}
\left\Vert f\right\Vert _{\boldsymbol{B}_{p\left( \cdot \right) ,q\left(
\cdot \right) }^{\alpha \left( \cdot \right) }}^{\prime }:=\left\Vert
k_{0}^{\ast ,a}f\right\Vert _{p(\cdot )}+\left\Vert \left\Vert k_{t}^{\ast
,a}t^{-\alpha (\cdot )}f\right\Vert _{p(\cdot )}\right\Vert _{L^{q(\cdot
)}((0,1],\frac{dt}{t})}  \label{equiv-norms2}
\end{equation}%
\textit{and}%
\begin{equation}
\left\Vert f\right\Vert _{\boldsymbol{B}_{p\left( \cdot \right) ,q\left(
\cdot \right) }^{\alpha \left( \cdot \right) }}^{\prime \prime }:=\left\Vert
k_{0}\ast f\right\Vert _{p(\cdot )}+\left\Vert \left\Vert t^{-\alpha (\cdot
)}(k_{t}\ast f)\right\Vert _{p(\cdot )}\right\Vert _{L^{q(\cdot )}((0,1],%
\frac{dt}{t})},  \label{equiv-norm3}
\end{equation}%
\textit{are equivalent norms on }$\boldsymbol{B}_{p(\cdot ),q(\cdot
)}^{\alpha (\cdot )}$.
\end{thm}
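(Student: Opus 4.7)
The first inequality $\|f\|'' \le \|f\|'$ is immediate by taking $y = x$ in the definition of $k_t^{\ast,a}t^{-\alpha(\cdot)}f(x)$. Invoking the Peetre maximal function characterization from Theorem \ref{fun-char}, I would then reduce the remaining work to showing
\[
\|f\|_{\boldsymbol{B}_{p(\cdot),q(\cdot)}^{\alpha(\cdot)}} \lesssim \|f\|'' \quad \text{and} \quad \|f\|' \lesssim \|f\|_{\boldsymbol{B}_{p(\cdot),q(\cdot)}^{\alpha(\cdot)}}^{\blacktriangledown},
\]
which together with the trivial direction close the chain $\|f\|'' \le \|f\|' \lesssim \|f\|_{\boldsymbol{B}}^{\blacktriangledown} \approx \|f\|_{\boldsymbol{B}} \lesssim \|f\|''$.

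The core engine is a Calder\'on-type reproducing formula adapted to $(k_0,k)$. Using the Tauberian conditions \eqref{T-cond1}--\eqref{T-cond2} and following the construction of Rychkov \cite{Ry01}, I would produce $\Lambda_0,\Lambda \in \mathcal{S}(\mathbb{R}^n)$ with $\mathrm{supp}\,\mathcal{F}\Lambda_0 \subset \{|\xi|<2\varepsilon\}$, $\mathrm{supp}\,\mathcal{F}\Lambda \subset \{\varepsilon/2<|\xi|<2\varepsilon\}$, and moments of $\Lambda$ vanishing to any prescribed order $L$, satisfying
\[
\mathcal{F}\Lambda_0(\xi)\,\mathcal{F}k_0(\xi) + \int_0^1 \mathcal{F}\Lambda(t\xi)\,\mathcal{F}k(t\xi)\,\frac{dt}{t} = 1.
\]
This yields, in $\mathcal{S}'(\mathbb{R}^n)$, the decomposition $f = \Lambda_0\ast k_0\ast f + \int_0^1 \Lambda_t \ast k_t \ast f\,\frac{dt}{t}$. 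For the other direction I would instead use the tautological identity $f = \Phi \ast f + \int_0^1 \varphi_t \ast f\,\frac{dt}{t}$ furnished directly by \eqref{Ass2}.

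The main estimate then arises from convolving the first identity with $\varphi_s$ (or the second with $k_s$) and invoking Lemma \ref{Ry-Lemma1}: because $k$ has vanishing moments to order $S$ and $\Lambda$ to order $L$ (chosen freely), one obtains the two-sided kernel bound
\[
|\varphi_s \ast \Lambda_t(x)| \lesssim \min\!\Big(\Big(\tfrac{s}{t}\Big)^{S+1},\,\Big(\tfrac{t}{s}\Big)^{L+1}\Big)\,\eta_{\max(s,t),N}(x)
\]
for any $N>0$. I would then promote this to the Peetre maximal level via Lemma \ref{r-trick}, use Lemma \ref{DHR-lemma} to slide the weight $s^{-\alpha(\cdot)}$ across the convolution, and apply the $L^{p(\cdot)}$-boundedness of convolution with a radial decreasing $L^1$ majorant (valid since $p\in\mathcal{P}^{\log}$) to arrive at
\[
\big\|s^{-\alpha(\cdot)}\varphi_s \ast f\big\|_{p(\cdot)} \lesssim \int_0^1 \min\!\Big(\Big(\tfrac{s}{t}\Big)^{S+1-\alpha^{+}}\!,\,\Big(\tfrac{t}{s}\Big)^{L+1-c_{\log}(\alpha)}\Big)\,\big\|t^{-\alpha(\cdot)} k_t \ast f\big\|_{p(\cdot)}\,\frac{dt}{t} + \|k_0\ast f\|_{p(\cdot)}.
\]

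To conclude, I would split the $t$-integral at $t=s$ and apply the continuous Hardy-type estimate Lemma \ref{lq-inequality} to each half, after taking the $L^{q(\cdot)}((0,1],\,dt/t)$-norm in $s$; the analogous scheme with $(\Phi,\varphi)$ in place of $(k_0,k)$ yields $\|f\|' \lesssim \|f\|_{\boldsymbol{B}}^{\blacktriangledown}$. The principal obstacle is the balancing of parameters: $N$ must be taken sufficiently large for $\eta_{\max(s,t),N}\ast$ to act boundedly on $L^{p(\cdot)}$ (with a constant absorbing the dilation in $s$), $L$ must be chosen with $L+1 > c_{\log}(\alpha)$ so that the Hardy exponent on the $t>s$ branch is positive, and crucially the standing hypothesis $\alpha^{+}<S+1$ is precisely what guarantees positivity of the Hardy exponent $S+1-\alpha^{+}$ on the $t<s$ branch, making the integral convergent. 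Once these choices are made, the machinery developed for Theorems \ref{14} and \ref{fun-char} delivers the equivalence of norms.
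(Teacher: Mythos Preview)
Your overall strategy is correct and is essentially Rychkov's scheme, the same one the paper follows: build a Calder\'on reproducing formula adapted to the Tauberian pair, push the kernel estimates through Lemma~\ref{Ry-Lemma1}, transfer the variable smoothness via Lemma~\ref{DHR-lemma}, and close with the Hardy-type Lemma~\ref{lq-inequality}. The one structural difference is that the paper inserts an intermediate ``self-improving'' step (its Step~2) proving $\|f\|'\lesssim\|f\|''$ directly for $(k_0,k)$ via a \emph{dilated} reproducing formula $f=\Lambda_t\ast(k_0)_t\ast f+\int_0^t\psi_h\ast k_h\ast f\,\tfrac{dh}{h}$, whereas you detour through Theorem~\ref{fun-char}, proving $\|f\|_{\boldsymbol{B}}\lesssim\|f\|''$ and $\|f\|'\lesssim\|f\|_{\boldsymbol{B}}^{\blacktriangledown}$ separately. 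Both routes work; yours is arguably cleaner because it reuses Theorem~\ref{fun-char} rather than redoing a Peetre-type bootstrap, while the paper's Step~2 has the advantage of giving $\|f\|'\lesssim\|f\|''$ as a stand-alone inequality.

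There is, however, a bookkeeping slip you should fix. In your displayed kernel bound for $\varphi_s\ast\Lambda_t$ you write the exponent $(s/t)^{S+1}$, citing the $S$ vanishing moments of $k$; but $k$ does not appear in $\varphi_s\ast\Lambda_t$. For that kernel the relevant moments are those of $\varphi$ (all orders, since $\mathcal{F}\varphi$ vanishes near the origin) and of $\Lambda$ (to the freely chosen order $L$), so both exponents in $\min\big((s/t)^{M+1},(t/s)^{L+1}\big)$ are at your disposal. The exponent $S+1$, and hence the genuine need for the hypothesis $\alpha^{+}<S+1$, enters only in the \emph{other} direction, when you convolve $f=\Phi\ast f+\int_0^1\varphi_\tau\ast f\,\tfrac{d\tau}{\tau}$ with $k_s$: there, for $s\le\tau$, Lemma~\ref{Ry-Lemma1} applied to $k_s\ast\varphi_\tau$ uses the $S$ moments of $k$ and produces $(s/\tau)^{S+1}$, which after weighting becomes $(s/\tau)^{S+1-\alpha^{+}}$ and must be positive for Lemma~\ref{lq-inequality} to apply. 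Also, your invocation of Lemma~\ref{r-trick} is unnecessary here and potentially misleading, since that lemma requires compact Fourier support (which $k$ lacks); the paper instead bounds $\eta_{\tau,N}\ast(\tau^{-\alpha(\cdot)}|\varphi_\tau\ast f|)(y)$ pointwise by $\varphi_\tau^{\ast,a}\tau^{-\alpha(\cdot)}f(y)\,\|\eta_{\tau,N-a}\|_1$ directly from the definition of the Peetre maximal function. Once these attributions are straightened out, your argument goes through.
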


\begin{proof} The idea of the proof is from V. S. Rychkov \cite{Ry01}.
The proof is divided into three steps.\\

\textit{Step }1\textit{.} Let $\varepsilon>0$. Take any pair of functions $\varphi _{0}$ and $%
\varphi \in \mathcal{S}(\mathbb{R}^{n})$ such that%
\begin{align*}
\left\vert \mathcal{F}\varphi _{0}(\xi )\right\vert & >0\quad \text{for}%
\quad \left\vert \xi \right\vert <2\varepsilon, \\
\left\vert \mathcal{F}\varphi (\xi )\right\vert & >0\quad
\text{for}\quad \frac{\varepsilon }{2}<\left\vert \xi \right\vert
<2\varepsilon.
\end{align*}
We prove that there is a constant $c>0$ such
that for any $f\in \boldsymbol{B}_{p(\cdot ),q(\cdot )}^{\alpha (\cdot )}$%
\begin{equation}
\left\Vert f\right\Vert _{\boldsymbol{B}_{p(\cdot ),q(\cdot
)}^{\alpha (\cdot )}}^{\prime }\leq c\left\Vert \varphi _{0}^{\ast
,a}f\right\Vert _{p(\cdot )}+\left\Vert \left\Vert \varphi
_{t}^{\ast ,a}t^{-\alpha (\cdot )}f\right\Vert _{p(\cdot
)}\right\Vert _{L^{q(\cdot )}((0,1],\frac{dt}{t})}. \label{key-est1}
\end{equation}%
Let $\Lambda $, $\lambda \in \mathcal{S}(\mathbb{R}^{n})$ such that%
\begin{equation}
\text{supp }\mathcal{F}\Lambda \subset \{\xi \in
\mathbb{R}^{n}:\left\vert \xi \right\vert <2\varepsilon
\}\text{,\quad supp }\mathcal{F}\lambda \subset \{\xi \in
\mathbb{R}^{n}:\varepsilon /2<\left\vert \xi \right\vert
<2\varepsilon \},  \label{T-cond3}
\end{equation}
\begin{equation*}
\mathcal{F}\Lambda (\xi )\mathcal{F}\varphi _{0}(\xi )+\int_{0}^{1}\mathcal{F%
}\lambda (\tau \xi )\mathcal{F}\varphi (\tau \xi )\frac{d\tau }{\tau }%
=1,\quad \xi \in \mathbb{R}^{n}.
\end{equation*}%
In particular, for any $f\in \boldsymbol{B}_{p(\cdot ),q(\cdot
)}^{\alpha
(\cdot )}$ the following identity is true:%
\begin{equation}
f=\Lambda \ast \varphi _{0}\ast f+\int_{0}^{1}\lambda _{\tau }\ast
\varphi _{\tau }\ast f\frac{d\tau }{\tau }.  \label{Li-P-decom}
\end{equation}%
Hence we can write%
\begin{equation*}
k_{t}\ast f=k_{t}\ast \Lambda \ast \varphi _{0}\ast
f+\int_{0}^{1}k_{t}\ast \lambda _{\tau }\ast \varphi _{\tau }\ast
f\frac{d\tau }{\tau },\quad t\in (0,1].
\end{equation*}%
We have%
\begin{equation}
t^{-\alpha (y)}\left\vert k_{t}\ast \lambda _{\tau }\ast \varphi
_{\tau }\ast f(y)\right\vert \leq t^{-\alpha
(y)}\int_{\mathbb{R}^{n}}\left\vert k_{t}\ast \lambda _{\tau
}(z)\right\vert \left\vert \varphi _{\tau }\ast f(y-z)\right\vert
dz.  \label{est1}
\end{equation}%
First, let $t\leq \tau $. Writing for any $z\in \mathbb{R}^{n}$%
\begin{equation*}
k_{t}\ast \lambda _{\tau }(z)=\tau ^{-n}k_{\frac{t}{\tau }}\ast \lambda (%
\frac{z}{\tau }),
\end{equation*}%
we deduce from Lemma \ref{Ry-Lemma1} that for any integer $S\geq -1$ and any $%
N>0 $ there is a constant $c>0$ independent of $t$ and $\tau $ such
that
\begin{equation*}
\left\vert k_{t}\ast \lambda _{\tau }(z)\right\vert \leq c\text{
}\frac{\tau ^{-n}\left( \frac{t}{\tau }\right) ^{S+1}}{\left( 1+\tau
^{-1}\left\vert z\right\vert \right) ^{N}},\quad z\in
\mathbb{R}^{n}\text{.}
\end{equation*}%
Hence, the right-hand side of $\mathrm{\eqref{est1}}$ is estimated
from above by
\begin{eqnarray*}
&&c\text{ }t^{-\alpha (y)}\tau ^{-n}\left( \frac{t}{\tau }\right)
^{S+1}\int_{\mathbb{R}^{n}}\left( 1+\tau ^{-1}\left\vert
z\right\vert
\right) ^{-N}\left\vert \varphi _{\tau }\ast f(y-z)\right\vert dz \\
&=&c\text{ }\left( \frac{t}{\tau }\right) ^{S+1}t^{-\alpha (y)}\eta
_{\tau ,N}\ast |\varphi _{\tau }\ast f|(y).
\end{eqnarray*}%
By using Lemma \ref{DHR-lemma}, we estimate
\begin{eqnarray*}
t^{-\alpha (y)}\eta _{\tau ,N}\ast |\varphi _{\tau }\ast f|(y) &\leq
&\left( \frac{\tau }{t}\right) ^{\alpha ^{+}}\eta _{\tau ,N-c_{\log
}(\alpha )}\ast
(\tau ^{-\alpha (y)}|\varphi _{v}\ast f|)(y) \\
&\leq &\left( \frac{\tau }{t}\right) ^{\alpha ^{+}}\varphi _{\tau
}^{\ast
,a}\tau ^{-\alpha (\cdot )}f(y)\big\|\eta _{\tau ,N-a-c_{\log }(\alpha )}%
\big\|_{1} \\
&\leq &c\text{ }\left( \frac{\tau }{t}\right) ^{\alpha ^{+}}\varphi
_{\tau }^{\ast ,a}\tau ^{-\alpha (\cdot )}f(y),
\end{eqnarray*}%
for any $N>n+a+c_{\log }(\alpha )$ and any $t\leq \tau $.\\

Next, let $t\geq \tau $. Then, again by Lemma \ref{Ry-Lemma1}, we have for any $%
z\in \mathbb{R}^{n}$ and any $L>0$
\begin{equation*}
\left\vert k_{t}\ast \lambda _{\tau }(z)\right\vert
=t^{-n}\left\vert k\ast
\lambda _{\frac{\tau }{t}}(\frac{z}{t})\right\vert \leq c\text{ }\frac{%
t^{-n}\left( \frac{\tau }{t}\right) ^{M+1}}{\left(
1+t^{-1}\left\vert z\right\vert \right) ^{L}},
\end{equation*}%
where an integer $M\geq -1$ is taken arbitrarily large, since
$D^{\alpha }\mathcal{F}\lambda (0)=0$ for all $\alpha $. Therefore,
for $t\geq \tau $,
the right-hand side of $\mathrm{\eqref{est1}}$ can be estimated from above by%
\begin{eqnarray*}
&&c\text{ }t^{-\alpha (y)}t^{-n}\left( \frac{\tau }{t}\right) ^{M+1}\int_{%
\mathbb{R}^{n}}\left( 1+t^{-1}\left\vert z\right\vert \right)
^{-L}\left\vert \varphi _{\tau }\ast f(y-z)\right\vert dz \\
&=&c\text{ }t^{-\alpha (y)}t^{-n}\left( \frac{\tau }{t}\right)
^{M+1}\eta _{t,L}\ast |\varphi _{\tau }\ast f|(y).
\end{eqnarray*}%
We have for any $t\geq \tau $%
\begin{equation*}
\left( 1+t^{-1}\left\vert z\right\vert \right) ^{-L}\leq \left( \frac{t}{%
\tau }\right) ^{L}\left( 1+\tau ^{-1}\left\vert z\right\vert \right)
^{-L}.
\end{equation*}%
Then, again, the right-hand side of $\mathrm{\eqref{est1}}$ is dominated by%
\begin{eqnarray*}
&&c\text{ }t^{-\alpha (y)}\text{ }\left( \frac{\tau }{t}\right)
^{M-L+1+n}\eta _{\tau ,L}\ast |\varphi _{\tau }\ast f|(y) \\
&\leq &c\text{ }\left( \frac{\tau }{t}\right) ^{M-L+1+n+\alpha
^{-}}\eta _{\tau ,L-c_{\log }(\alpha )}\ast (\tau ^{-\alpha (\cdot
)}|\varphi _{\tau
}\ast f|)(y) \\
&\leq &c\text{ }\left( \frac{\tau }{t}\right) ^{M-L+1+n+\alpha
^{-}}\varphi _{\tau }^{\ast ,a}\tau ^{-\alpha (\cdot
)}f(y)\big\|\eta _{\tau ,L-a-c_{\log
}(\alpha )}\big\|_{1} \\
&\leq &c\text{ }\left( \frac{\tau }{t}\right) ^{M-L+1+n+\alpha
^{-}}\varphi _{\tau }^{\ast ,a}\tau ^{-\alpha (\cdot )}f(y),
\end{eqnarray*}%
where in the first inequality we have used Lemma \ref{DHR-lemma} (by taking $%
L>n+a+c_{\log }(\alpha )$). Let us take $M>L-\alpha ^{-}+a-n$ to
estimate the
last expression by%
\begin{equation*}
c\text{ }\left( \frac{\tau }{t}\right) ^{a+1}\varphi _{\tau }^{\ast
,a}\tau ^{-\alpha (\cdot )}f(y),
\end{equation*}%
where $c>0$ is independent of $t,\tau $ and $f$. Further, note that for all $%
x,y\in \mathbb{R}^{n}$ and all $t,\tau \in (0,1]$%
\begin{eqnarray*}
\varphi _{\tau }^{\ast ,a}\tau ^{-\alpha (\cdot )}f(y) &\leq
&\varphi _{\tau }^{\ast ,a}\tau ^{-\alpha (\cdot )}f(x)(1+\tau
^{-1}\left\vert
x-y\right\vert )^{a} \\
&\leq &\varphi _{\tau }^{\ast ,a}\tau ^{-\alpha (\cdot )}f(x)\max \Big(1,%
\Big(\frac{t}{\tau }\Big)^{a}\Big)(1+t^{-1}\left\vert x-y\right\vert
)^{a}.
\end{eqnarray*}%
Hence%
\begin{equation*}
\sup_{y\in \mathbb{R}^{n}}\frac{t^{-\alpha (y)}\left\vert k_{t}\ast
\lambda _{\tau }\ast \varphi _{\tau }\ast f(y)\right\vert
}{(1+t^{-1}\left\vert x-y\right\vert )^{a}}\leq C\text{ }\varphi
_{\tau }^{\ast ,a}\tau ^{-\alpha (\cdot )}f(x)\times \left\{
\begin{array}{ccc}
\left( \frac{t}{\tau }\right) ^{S+1-\alpha ^{+}} & \text{if} & t\leq \tau, \\
\frac{\tau }{t} & \text{if} & t\geq \tau .%
\end{array}%
\right.
\end{equation*}%
Using the fact that for any $z\in \mathbb{R}^{n}$, any $N>0$ and any
integer
$S\geq -1$%
\begin{equation*}
\left\vert k_{t}\ast \Lambda (z)\right\vert \leq c\text{ }\frac{t^{S+1}}{%
\left( 1+\left\vert z\right\vert \right) ^{N}},
\end{equation*}%
we obtain by the similar arguments that for any $t\in (0,1]$%
\begin{equation*}
\sup_{y\in \mathbb{R}^{n}}\frac{t^{-\alpha (y)}\left\vert k_{t}\ast
\Lambda \ast \varphi _{0}\ast f(y)\right\vert }{(1+t^{-1}\left\vert
x-y\right\vert )^{a}}\leq C\text{ }t^{S+1-\alpha ^{+}}\varphi
_{0}^{\ast ,a}f(x).
\end{equation*}%
Hence for all $f\in \boldsymbol{B}_{p(\cdot ),q(\cdot )}^{\alpha
(\cdot )}$, any $x\in \mathbb{R}^{n}$ and any $t\in (0,1]$, we get
\begin{equation}
k_{t}^{\ast ,a}t^{-\alpha (\cdot )}f(x)\leq C\text{ }t^{S+1-\alpha
^{+}}\varphi _{0}^{\ast ,a}f(x)+C\int_{0}^{1}\min \Big(\Big(\frac{t}{\tau }%
\Big)^{S+1-\alpha ^{+}},\frac{\tau }{t}\Big)\varphi _{\tau }^{\ast
,a}\tau ^{-\alpha (\cdot )}f(x)\frac{d\tau }{\tau }.  \label{est-k}
\end{equation}%
Also we have for any $z\in \mathbb{R}^{n}$, any $N>0$ and any
integer $M\geq
-1$%
\begin{equation*}
\left\vert k_{0}\ast \lambda _{\tau }(z)\right\vert \leq c\text{
}\frac{\tau ^{M+1}}{\left( 1+\left\vert z\right\vert \right) ^{N}}
\end{equation*}%
and%
\begin{equation*}
\left\vert k_{0}\ast \Lambda (z)\right\vert \leq c\text{
}\frac{1}{\left( 1+\left\vert z\right\vert \right) ^{N}}.
\end{equation*}%
As before, we get for any $x\in \mathbb{R}^{n}$%
\begin{equation}
k_{0}^{\ast ,a}f(x)\leq C\text{ }\varphi _{0}^{\ast
,a}f(x)+C\int_{0}^{1}\tau \varphi _{\tau }^{\ast ,a}\tau ^{-\alpha
(\cdot )}f(x)\frac{d\tau }{\tau }.  \label{estk0}
\end{equation}%
In $\mathrm{\eqref{est-k}}$ and $\mathrm{\eqref{estk0}}$ taking the $%
L^{p(\cdot )}$-norm and then using Lemma \ref{lq-inequality}, we get $%
\mathrm{\eqref{key-est1}}$.\\

\textit{Step }2. We prove in this step that there is a constant
$c>0$ such that for any $f\in \boldsymbol{B}_{p(\cdot ),q(\cdot
)}^{\alpha (\cdot )}$
\begin{equation}
\left\Vert f\right\Vert _{\boldsymbol{B}_{p(\cdot ),q(\cdot
)}^{\alpha (\cdot )}}^{\prime }\leq c\left\Vert f\right\Vert
_{\boldsymbol{B}_{p(\cdot ),q(\cdot )}^{\alpha (\cdot )}}^{\prime
\prime }.  \label{key-est2}
\end{equation}%
Analogously to $\mathrm{\eqref{T-cond3}}$,
$\mathrm{\eqref{Li-P-decom}}$ we find two functions $\Lambda $,
$\psi \in \mathcal{S}(\mathbb{R}^{n})$ such that
\begin{equation*}
\mathcal{F}\Lambda (t\xi )\mathcal{F}k_{0}(t\xi )+\int_{0}^{1}\mathcal{F}%
\psi (\tau t\xi )\mathcal{F}k(\tau t\xi )\frac{d\tau }{\tau
}=1,\quad \xi \in \mathbb{R}^{n},
\end{equation*}%
and for all $f\in \boldsymbol{B}_{p(\cdot ),q(\cdot )}^{\alpha
(\cdot )}$ and $t\in (0,1]$
\begin{equation*}
f=\Lambda _{t}\ast \left( k_{0}\right) _{t}\ast f+\int_{0}^{t}\psi
_{h}\ast k_{h}\ast f\frac{dh}{h}.
\end{equation*}%
Hence
\begin{equation*}
k_{t}\ast f=\Lambda _{t}\ast \left( k_{0}\right) _{t}\ast k_{t}\ast
f+\int_{0}^{t}k_{t}\ast \psi _{h}\ast k_{h}\ast f\frac{dh}{h}.
\end{equation*}%
Writing for any $z\in \mathbb{R}^{n}$%
\begin{equation*}
k_{t}\ast \psi _{h}(z)=t^{-n}(k\ast \psi
_{\frac{h}{t}})(\frac{z}{t}),
\end{equation*}%
we deduce from Lemma \ref{Ry-Lemma1} that for any integer $K\geq -1$ and any $%
M>0 $ there is a constant $c>0$ independent of $t$ and $h$ such that
\begin{equation*}
\left\vert k_{t}\ast \psi _{h}(z)\right\vert \leq c\text{ }\frac{%
t^{-n}\left( \frac{h}{t}\right) ^{K+1}}{\left( 1+t^{-1}\left\vert
z\right\vert \right) ^{M}},\quad z\in \mathbb{R}^{n}\text{.}
\end{equation*}%
Analogous estimate%
\begin{equation*}
\left\vert \Lambda _{t}\ast (k_{0})_{t}(z)\right\vert \leq c\text{ }\frac{%
t^{-n}}{\left( 1+t^{-1}\left\vert z\right\vert \right) ^{M}},\quad
z\in \mathbb{R}^{n},
\end{equation*}%
is obvious. From this it follows that
\begin{eqnarray*}
t^{-\alpha \left( y\right) }\left\vert k_{t}\ast f(y)\right\vert &\leq &c%
\text{ }t^{-\alpha \left( y\right) }\eta _{t,M}\ast |k_{t}\ast f|(y) \\
&&+\int_{0}^{t}\left( \frac{h}{t}\right) ^{K+1+\alpha
^{-}}h^{-\alpha \left( y\right) }\eta _{t,M}\ast |k_{h}\ast
f|(y)\frac{dh}{h}.
\end{eqnarray*}%
Since%
\begin{equation*}
\left( 1+t^{-1}\left\vert y-z\right\vert \right) ^{-M}\leq \left( \frac{t}{h}%
\right) ^{M}\left( 1+h^{-1}\left\vert y-z\right\vert \right)
^{-M},\quad y,z\in \mathbb{R}^{n},
\end{equation*}%
by Lemma \ref{DHR-lemma} and H\"{o}lder's inequality, we obtain%
\begin{eqnarray*}
&&t^{-\alpha \left( y\right) }\left\vert k_{t}\ast f(y)\right\vert \\
&\leq &c\text{ }t^{-\alpha \left( y\right) }\eta _{t,M}\ast
|k_{t}\ast f|(y)+\int_{0}^{t}\left( \frac{h}{t}\right) ^{K+1+\alpha
^{-}-M}h^{-\alpha
\left( y\right) }\eta _{h,M}\ast |k_{h}\ast f|(y)\frac{dh}{h} \\
&\leq &c\text{ }\left( \eta _{t,ap^{-}}\ast t^{-\alpha \left( \cdot
\right)
p^{-}}|k_{t}\ast f|^{p^{-}}(y)\right) ^{1/p^{-}} \\
&&+\int_{0}^{t}\left( \frac{h}{t}\right) ^{K+1+\alpha
^{-}-M+n}\left( \eta _{h,ap^{-}}\ast h^{-\alpha \left( \cdot \right)
p^{-}}|k_{h}\ast f|^{p^{-}}(y)\right) ^{1/p^{-}}\frac{dh}{h}
\end{eqnarray*}%
by taking $M>a+n+c_{\log }(\alpha )$.\ Using the elementary inequality:%
\begin{align*}
\left( 1+t^{-1}\left\vert x-y\right\vert \right) ^{-a}& \leq \left(
1+t^{-1}\left\vert x-z\right\vert \right) ^{-a}\left(
1+t^{-1}\left\vert
y-z\right\vert \right) ^{a} \\
& \leq \left( \frac{t}{h}\right) ^{a}\left( 1+h^{-1}\left\vert
x-z\right\vert \right) ^{-a}\left( 1+h^{-1}\left\vert y-z\right\vert
\right) ^{a},
\end{align*}%
and again H\"{o}lder's inequality, we get%
\begin{eqnarray}
\left( k_{t}^{\ast ,a}t^{-\alpha \left( \cdot \right) }f(x)\right)
^{p^{-}} &\leq &c\text{ }\eta _{t,a}\ast t^{-\alpha \left( \cdot
\right)
p^{-}}|k_{t}\ast f|^{p^{-}}(x)  \label{key-est13} \\
&&+c\int_{0}^{t}\left( \frac{h}{t}\right) ^{N}\eta _{h,ap^{-}}\ast
h^{-\alpha \left( \cdot \right) p^{-}}|k_{h}\ast
f|^{p^{-}}(x)\frac{dh}{h}, \notag
\end{eqnarray}%
where $N>0$ can be still be taken arbitrarily large. Similarly, we
obtain
\begin{eqnarray}
\left( k_{0}^{\ast ,a}f(x)\right) ^{p^{-}} &\leq &c\eta
_{1,ap^{-}}\ast
|k_{0}\ast f|(x)+  \label{key-est12} \\
&&\int_{0}^{1}h^{N}\eta _{h,ap^{-}}\ast h^{-\alpha \left( \cdot
\right) p^{-}}|k_{h}\ast f|^{p^{-}}(x)\frac{dh}{h}.  \notag
\end{eqnarray}%
Observe\ that%
\begin{equation*}
\left\Vert \eta _{h,ap^{-}}\ast h^{-\alpha \left( \cdot \right)
p^{-}}|k_{h}\ast f|^{p^{-}}\right\Vert _{p(\cdot )/p^{-}}\lesssim
\left\Vert h^{-\alpha \left( \cdot \right) }k_{h}\ast f\right\Vert
_{p(\cdot )}^{p^{-}},
\end{equation*}%
\begin{equation*}
\left\Vert \eta _{h,ap^{-}}\ast |k_{0}\ast f|^{p^{-}}\right\Vert
_{p(\cdot )/p^{-}}\lesssim \left\Vert k_{0}\ast f\right\Vert
_{p(\cdot )}^{p^{-}},
\end{equation*}%
and
\begin{equation*}
\left\Vert \eta _{t,ap^{-}}\ast t^{-\alpha \left( \cdot \right)
p^{-}}|k_{t}\ast f|^{p^{-}}\right\Vert _{p(\cdot )/p^{-}}\lesssim
\left\Vert t^{-\alpha \left( \cdot \right) }k_{t}\ast f\right\Vert
_{p(\cdot )}^{p^{-}}
\end{equation*}%
for any $h\in (0,t]$ and any $0<t\leq 1$. Then $\mathrm{%
\eqref{key-est12}}$, with power $1/p^{-}$, in $L^{p(\cdot )}$-norm
is bounded by
\begin{equation*}
\left\Vert k_{0}\ast f\right\Vert _{p(\cdot
)}+\int_{0}^{1}h^{N}\left\Vert
h^{-\alpha \left( \cdot \right) }k_{h}\ast f\right\Vert _{p(\cdot )}\frac{dh%
}{h}\lesssim \left\Vert f\right\Vert _{\boldsymbol{B}_{p\left( \cdot
\right) ,q\left( \cdot \right) }^{\alpha \left( \cdot \right)
}}^{\prime \prime },
\end{equation*}%
since $N$ can be still be taken arbitrarily large. In $\mathrm{%
\eqref{key-est13}}$, taking the $L^{p(\cdot )/p^{-}}$-norm, using
the above
estimates, taking the $1/p^{-}$ power and then the $L^{q(\cdot )}((0,1],\frac{%
dt}{t})$-norm, we find from Lemma \ref{lq-inequality} that $$%
\left\Vert f\right\Vert _{\boldsymbol{B}_{p(\cdot ),q(\cdot
)}^{\alpha (\cdot )}}^{\prime }\leq c\left\Vert f\right\Vert
_{\boldsymbol{B}_{p(\cdot ),q(\cdot )}^{\alpha (\cdot )}}^{\prime
\prime }.
$$

\vspace{5mm}

\textit{Step} 3.\ We will prove in this step that for all $f\in \boldsymbol{B%
}_{p(\cdot ),q(\cdot )}^{\alpha (\cdot )}$ the following estimate is true:%
\begin{equation*}
\left\Vert f\right\Vert _{\boldsymbol{B}_{p(\cdot ),q(\cdot
)}^{\alpha (\cdot )}}^{\prime }\leq c\left\Vert f\right\Vert
_{\boldsymbol{B}_{p(\cdot
),q(\cdot )}^{\alpha (\cdot )}}\leq c\left\Vert f\right\Vert _{\boldsymbol{B}%
_{p(\cdot ),q(\cdot )}^{\alpha (\cdot )}}^{\prime \prime }.
\end{equation*}%
Let $\left\{ \mathcal{F}\Phi ,\mathcal{F}\varphi \right\} $ be a
resolution
of unity. The first inequality is proved by the chain of the estimates%
\begin{eqnarray*}
&&\left\Vert f\right\Vert _{\boldsymbol{B}_{p(\cdot ),q(\cdot
)}^{\alpha (\cdot )}}^{\prime }\leq c\left\Vert \Phi ^{\ast
,a}f\right\Vert _{p(\cdot )}+\left\Vert \left\Vert \varphi
_{t}^{\ast ,a}t^{-\alpha (\cdot )}f\right\Vert _{p(\cdot
)}\right\Vert _{L^{q(\cdot )}((0,1],\frac{dt}{t})}
\\
&\leq &c\left\Vert \Phi \ast f\right\Vert _{p(\cdot )}+\left\Vert
\left\Vert t^{-\alpha (\cdot )}\varphi _{t}\ast f\right\Vert
_{p(\cdot )}\right\Vert
_{L^{q(\cdot )}((0,1],\frac{dt}{t})}\leq c\left\Vert f\right\Vert _{%
\boldsymbol{B}_{p(\cdot ),q(\cdot )}^{\alpha (\cdot )}},
\end{eqnarray*}%
where the first inequality is $\mathrm{\eqref{key-est1}}$, see Step
1, the
second inequality is $\mathrm{\eqref{key-est2}}$ (with $\varphi $ and $%
\varphi _{0}$ instead of $k$ and $k_{0}$), see Step 2, and finally
the third inequality is obvious. Now the second inequality can be
obtained by the
following chain%
\begin{eqnarray*}
\left\Vert f\right\Vert _{\boldsymbol{B}_{p(\cdot ),q(\cdot
)}^{\alpha (\cdot )}} &\leq &c\left\Vert \Phi ^{\ast ,a}f\right\Vert
_{p(\cdot )}+\left\Vert \left\Vert \varphi _{t}^{\ast ,a}t^{-\alpha
(\cdot
)}f\right\Vert _{p(\cdot )}\right\Vert _{L^{q(\cdot )}(\left[ 0,1\right] ,%
\frac{dt}{t})} \\
&\leq &c\left\Vert f\right\Vert _{\boldsymbol{B}_{p(\cdot ),q(\cdot
)}^{\alpha (\cdot )}}^{\prime }\leq c\left\Vert f\right\Vert _{\boldsymbol{B}%
_{p(\cdot ),q(\cdot )}^{\alpha (\cdot )}}^{\prime \prime },
\end{eqnarray*}%
where the first inequality is obvious, the second inequality is $\mathrm{%
\eqref{key-est1}}$ in Step 1, with the roles of $k_{0}$ and $k$
respectively $\varphi _{0}$ and $\varphi $ interchanged, and finally
the last inequality is $\mathrm{\eqref{key-est2}}$ in Step 2. Thus,
Theorem \ref{loc-mean-char} is proved.
\end{proof}

From Theorems \ref{equi-norm} and \ref{loc-mean-char}, we have:

\begin{thm}
\label{loc-mean-char1}Let $\alpha \in C_{\mathrm{loc}}^{\log }\left( 
%TCIMACRO{\U{211d} }%
%BeginExpansion
\mathbb{R}
%EndExpansion
^{n}\right) $ and $p\in \mathcal{P}^{\log }\left( 
%TCIMACRO{\U{211d} }%
%BeginExpansion
\mathbb{R}
%EndExpansion
^{n}\right) $. Let $q\in \mathcal{P}\left( 
%TCIMACRO{\U{211d} }%
%BeginExpansion
\mathbb{R}
%EndExpansion
\right) $ be \emph{$\log $}-H\"{o}lder continuous at the origin\textit{, }$a>%
\frac{n}{p^{-}}$\textit{\ and }$\alpha ^{+}<S+1$\textit{. Let }$f\in 
\boldsymbol{B}_{p(\cdot ),q(\cdot )}^{\alpha (\cdot )}$,\textit{\ }%
\begin{equation*}
\left\Vert f\right\Vert _{\boldsymbol{B}_{p\left( \cdot \right) ,q\left(
0\right) }^{\alpha \left( \cdot \right) }}^{\prime }:=\left\Vert k_{0}^{\ast
,a}f\right\Vert _{p(\cdot )}+\left\Vert \left\Vert k_{t}^{\ast ,a}t^{-\alpha
(\cdot )}f\right\Vert _{p(\cdot )}\right\Vert _{L^{q(0)}((0,1],\frac{dt}{t})}
\end{equation*}%
and%
\begin{equation*}
\left\Vert f\right\Vert _{\boldsymbol{B}_{p\left( \cdot \right) ,q\left(
0\right) }^{\alpha \left( \cdot \right) }}^{\prime \prime }:=\left\Vert
k_{0}\ast f\right\Vert _{p(\cdot )}+\left\Vert \left\Vert t^{-\alpha (\cdot
)}(k_{t}\ast f)\right\Vert _{p(\cdot )}\right\Vert _{L^{q(0)}((0,1],\frac{dt%
}{t})}.
\end{equation*}%
\textit{Then}%
\begin{equation*}
\left\Vert f\right\Vert _{\boldsymbol{B}_{p\left( \cdot \right) ,q\left(
\cdot \right) }^{\alpha \left( \cdot \right) }}^{\prime }\approx \left\Vert
f\right\Vert _{\boldsymbol{B}_{p\left( \cdot \right) ,q\left( 0\right)
}^{\alpha \left( \cdot \right) }}^{\prime }\approx \left\Vert f\right\Vert _{%
\boldsymbol{B}_{p\left( \cdot \right) ,q\left( \cdot \right) }^{\alpha
\left( \cdot \right) }}^{\prime \prime }\approx \left\Vert f\right\Vert _{%
\boldsymbol{B}_{p\left( \cdot \right) ,q\left( 0\right) }^{\alpha \left(
\cdot \right) }}^{\prime \prime }\approx \left\Vert f\right\Vert _{%
\boldsymbol{B}_{p\left( \cdot \right) ,q\left( .\right) }^{\alpha \left(
\cdot \right) }}.
\end{equation*}
\end{thm}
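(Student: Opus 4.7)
The plan is to obtain all stated equivalences as a direct consequence of Theorem \ref{loc-mean-char} (applied separately with the variable index $q(\cdot)$ and with the constant index $q(0)$), combined with Theorem \ref{equi-norm}, which already identifies $\|f\|_{\boldsymbol{B}_{p(\cdot),q(\cdot)}^{\alpha(\cdot)}} \approx \|f\|_{\boldsymbol{B}_{p(\cdot),q(0)}^{\alpha(\cdot)}}$. No new analytic machinery is needed; the statement is essentially a bookkeeping corollary gluing together these two previously established results.

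First, I would invoke Theorem \ref{loc-mean-char} directly with the given variable exponent $q(\cdot)$ to recover
$$\|f\|_{\boldsymbol{B}_{p(\cdot),q(\cdot)}^{\alpha(\cdot)}}^{\prime} \approx \|f\|_{\boldsymbol{B}_{p(\cdot),q(\cdot)}^{\alpha(\cdot)}}^{\prime\prime} \approx \|f\|_{\boldsymbol{B}_{p(\cdot),q(\cdot)}^{\alpha(\cdot)}}.$$
Second, I would reapply Theorem \ref{loc-mean-char} with $q(\cdot)$ replaced by the constant $q_{0}:=q(0)\in[1,\infty)$. Any constant exponent is trivially log-H\"older continuous at the origin and satisfies $1\le q_{0}^{-}\le q_{0}^{+}<\infty$, so the hypotheses of Theorem \ref{loc-mean-char} are fulfilled; moreover the Peetre-maximal and local-means functionals defining $\|\cdot\|^{\prime}$ and $\|\cdot\|^{\prime\prime}$ depend on the outer Lebesgue exponent only through its value, so the application is legitimate and yields
$$\|f\|_{\boldsymbol{B}_{p(\cdot),q(0)}^{\alpha(\cdot)}}^{\prime} \approx \|f\|_{\boldsymbol{B}_{p(\cdot),q(0)}^{\alpha(\cdot)}}^{\prime\prime} \approx \|f\|_{\boldsymbol{B}_{p(\cdot),q(0)}^{\alpha(\cdot)}}.$$

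Finally, Theorem \ref{equi-norm} furnishes the bridge $\|f\|_{\boldsymbol{B}_{p(\cdot),q(\cdot)}^{\alpha(\cdot)}} \approx \|f\|_{\boldsymbol{B}_{p(\cdot),q(0)}^{\alpha(\cdot)}}$. Concatenating the three equivalence chains produces the desired five-way equivalence in the stated order. There is essentially no obstacle; the only point that deserves a line of verification is that the constant $q_{0}=q(0)$ indeed meets every assumption imposed on the outer index inside Theorem \ref{loc-mean-char}, most notably the log-H\"older continuity at the origin and the boundedness $q_{0}^{+}<\infty$ implicit in the use of Lemma \ref{lq-inequality} in Step 1 and Step 2 of that theorem, both of which are automatic for a constant in $[1,\infty)$.
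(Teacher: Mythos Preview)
Your proposal is correct and is exactly the paper's approach: the paper itself presents this theorem with no proof beyond the sentence ``From Theorems \ref{equi-norm} and \ref{loc-mean-char}, we have,'' and your argument spells out precisely that deduction. The one check you flag---that the constant exponent $q_{0}=q(0)\in[1,\infty)$ satisfies the hypotheses of Theorem \ref{loc-mean-char}---is indeed the only thing to verify, and it is automatic.
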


\section{Embeddings}

For the spaces $\boldsymbol{B}_{p(\cdot ),q(\cdot )}^{\alpha (\cdot )}$
introduced above we want to show some embedding theorems. We say a
quasi-Banach space $A_{1}$ is continuously embedded in another quasi-Banach
space $A_{2}$, if $A_{1}\subset A_{2}$ and there exists a constant $c>0$
such that 
\begin{equation*}
\left\Vert f\right\Vert _{A_{2}}\leq c\left\Vert f\right\Vert _{A_{1}}
\end{equation*}
for all $f\in A_{1}$. Then we write 
\begin{equation*}
A_{1}\hookrightarrow A_{2}.
\end{equation*}
We begin with the following elementary embeddings.

\begin{thm}
\label{Elem-emb} Let $\alpha _{0},\alpha _{1}\in C_{\mathrm{loc}}^{\log
}\left( 
%TCIMACRO{\U{211d} }%
%BeginExpansion
\mathbb{R}
%EndExpansion
^{n}\right) $ and $p\in \mathcal{P}^{\log }\left( 
%TCIMACRO{\U{211d} }%
%BeginExpansion
\mathbb{R}
%EndExpansion
^{n}\right) $. Let $q_{0},q_{1}\in \mathcal{P}\left( 
%TCIMACRO{\U{211d} }%
%BeginExpansion
\mathbb{R}
%EndExpansion
\right) $ satisfy the log-H\"{o}lder decay condition at the origin. If\ $%
(\alpha _{0}-\alpha _{1})^{-}>0$, then 
\begin{equation*}
\boldsymbol{B}_{{p(\cdot )},q_{0}{(\cdot )}}^{\alpha _{0}(\cdot
)}\hookrightarrow \boldsymbol{B}_{{p(\cdot )},q_{1}{(\cdot )}}^{\alpha
_{1}(\cdot )}.
\end{equation*}
\end{thm}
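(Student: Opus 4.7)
The plan is to reduce the embedding to a scalar weighted integral inequality for the function $g_{t} := \|t^{-\alpha_0(\cdot)}(\varphi_{t}\ast f)\|_{p(\cdot)}$ and then to split that inequality into two cases depending on the sign of $q_1(0) - q_0(0)$.

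First, since $q_0$ and $q_1$ are log-H\"older continuous at the origin, Theorem~\ref{equi-norm} (applied to each) reduces the problem to the case where $q_0(\cdot )$ and $q_1(\cdot )$ are replaced by the constants $q_{0} := q_0(0)$ and $q_{1} := q_1(0)$, with a single resolution of unity $\{\mathcal{F}\Phi,\mathcal{F}\varphi\}$ used for both spaces. Put $\varepsilon := (\alpha_0 - \alpha_1)^- > 0$. For every $x \in \mathbb{R}^n$ and every $t \in (0,1]$, the inequality $\alpha_0(x) - \alpha_1(x) \geq \varepsilon$ together with $t \leq 1$ gives
\[
 t^{-\alpha_1(x)} \;=\; t^{\alpha_0(x)-\alpha_1(x)}\,t^{-\alpha_0(x)} \;\leq\; t^{\varepsilon}\,t^{-\alpha_0(x)},
\]
so, by monotonicity of the Luxemburg norm, $\|t^{-\alpha_1(\cdot )}(\varphi_{t}\ast f)\|_{p(\cdot )} \leq t^{\varepsilon}\,g_{t}$. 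The low-frequency term $\|\Phi\ast f\|_{p(\cdot )}$ is the same for both norms, so the proof reduces to establishing the scalar inequality
\[
 \bigl\|\,t^{\varepsilon}g_{t}\bigr\|_{L^{q_{1}}((0,1],\,dt/t)} \;\leq\; c\,\|g_{t}\|_{L^{q_{0}}((0,1],\,dt/t)}.
\]

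If $q_{1}\leq q_{0}$, H\"older's inequality with $1/q_{1}=1/r+1/q_{0}$ (so $r\in[1,\infty]$) yields
\[
 \bigl\|\,t^{\varepsilon}g_{t}\bigr\|_{L^{q_{1}}((0,1],\,dt/t)} \;\leq\; \bigl\|\,t^{\varepsilon}\bigr\|_{L^{r}((0,1],\,dt/t)}\,\|g_{t}\|_{L^{q_{0}}((0,1],\,dt/t)},
\]
and $\|t^{\varepsilon}\|^{r}_{L^{r}((0,1],\,dt/t)}=\int_0^1 t^{\varepsilon r}\,dt/t=1/(\varepsilon r)<\infty$.

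The main obstacle is the case $q_{1}>q_{0}$, since in that regime the scalar inequality fails for arbitrary positive $g_{t}$ (consider sharply peaked test functions). Here I would exploit the self-averaging of $g_{t}$ coming from the Calder\'on reproducing identity used in the proof of Theorem~\ref{14}: with $\Psi=\Phi$ and $\psi=\varphi$ in that argument,
\[
 \varphi_{t}\ast f \;=\; \int_{t/4}^{\min(1,\,4t)} \varphi_{t}\ast\varphi_{\tau}\ast f\,\frac{d\tau}{\tau} \;+\; \varphi_{t}\ast\Phi\ast f\cdot\chi_{[1/4,\,1]}(t).
\]
Combining the pointwise inequality~\eqref{est-t-and-tau}, the $L^{p(\cdot )}$-boundedness of convolution with radially decreasing $L^{1}$-functions (valid since $p\in\mathcal{P}^{\log}$), and H\"older's inequality in $\tau$ provides the uniform bound
\[
 g_{t} \;\leq\; c\,\Bigl(\int_{0}^{1} g_{\tau}^{q_{0}}\,\tfrac{d\tau}{\tau}\Bigr)^{1/q_{0}} + c\,\|\Phi\ast f\|_{p(\cdot )} \;\leq\; c\,\|f\|_{\boldsymbol{B}_{p(\cdot ),q_{0}}^{\alpha_{0}(\cdot )}}, \qquad t\in(0,1].
\]
Writing $(t^{\varepsilon}g_{t})^{q_{1}} = (t^{\varepsilon}g_{t})^{q_{1}-q_{0}}\cdot t^{\varepsilon q_{0}}g_{t}^{q_{0}} \leq \bigl(\sup_{s\in(0,1]} s^{\varepsilon}g_{s}\bigr)^{q_{1}-q_{0}}\,g_{t}^{q_{0}}$, integrating in $t$ against $dt/t$, and extracting the $q_{1}$-th root completes the scalar inequality and hence the embedding.
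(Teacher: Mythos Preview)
Your proof is correct, but it takes a slightly more circuitous route than the paper's. The paper does not invoke Theorem~\ref{equi-norm} and never splits into cases on the relative size of $q_0(0)$ and $q_1(0)$. Instead, it carries out the Calder\'on reproducing step once and for all at the level of $\|t^{-\alpha_1(\cdot)}(\varphi_t\ast f)\|_{p(\cdot)}$, obtaining directly
\[
\|t^{-\alpha_1(\cdot)}(\varphi_t\ast f)\|_{p(\cdot)}\;\lesssim\; t^{(\alpha_0-\alpha_1)^-}\,\|f\|_{\boldsymbol{B}^{\alpha_0(\cdot)}_{p(\cdot),q_0(\cdot)}},\qquad t\in(0,1],
\]
via H\"older in $\tau$ over $[t/4,\min(1,4t)]$. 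After that the $L^{q_1(\cdot)}((0,1],dt/t)$-norm is simply $\|t^{\varepsilon}\|_{L^{q_1(\cdot)}}\cdot\|f\|$, finite because $q_1^-\geq 1$ and $\varepsilon>0$. This single estimate covers all $q_0,q_1$ simultaneously and works directly with the variable exponent $q_1(\cdot)$.

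Your Case~1 ($q_1\le q_0$) is genuinely more elementary---just H\"older in $t$, no reproducing formula---which is a nice observation. In Case~2, however, your uniform bound $g_t\le c\,\|f\|$ is exactly the paper's key estimate, and once you have it the power-splitting $(t^{\varepsilon}g_t)^{q_1}=(t^{\varepsilon}g_t)^{q_1-q_0}\cdot t^{\varepsilon q_0}g_t^{q_0}$ is unnecessary: one can simply write $(t^{\varepsilon}g_t)^{q_1}\le t^{\varepsilon q_1}(c\|f\|)^{q_1}$ and integrate, which is what the paper does and which handles both cases at once. So the reduction to constant $q$'s and the case distinction are correct but avoidable.
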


\begin{proof} Let $\{\mathcal{F}\Phi, \mathcal{F}\varphi \}$ and
$\left\{
\mathcal{F}\Psi ,\mathcal{F}\psi \right\} $ be two resolutions of unity.%
\textit{\ }We have%
\begin{eqnarray*}
\varphi _{t}\ast f &=&\int_{t/4}^{\min (1,4t)}\varphi _{t}\ast \psi
_{\tau
}\ast f\frac{d\tau }{\tau }+h_{t},\quad t\in (0,1] \\
&=&g_{t}+h_{t},
\end{eqnarray*}%
where%
\begin{equation*}
h_{t}=\left\{
\begin{array}{ccc}
0, & \text{if} & 0<t<\frac{1}{4}; \\
\varphi _{t}\ast \Psi \ast f, & \text{if} & \frac{1}{4}\leq t\leq 1.%
\end{array}%
\right.
\end{equation*}%
From the fact that the convolution with a radially decreasing
$L^{1}$-function is bounded on $L^{p(\cdot )}$, we deduce that
\begin{eqnarray*}
&&\left\Vert t^{-{\alpha }_{1}{(\cdot )}}g_{t}\right\Vert _{{p(\cdot )}} \\
&\lesssim &t^{(\alpha _{0}-\alpha _{1})^{-}}\int_{t/4}^{\min
(1,4t)}\left\Vert \tau ^{-{\alpha }_{0}{(\cdot )}}(\psi _{\tau }\ast
f)\right\Vert _{{p(\cdot )}}\frac{d\tau }{\tau } \\
&\lesssim &t^{(\alpha _{0}-\alpha _{1})^{-}}\left\Vert \left\Vert
\tau ^{-\alpha _{0}(\cdot )}(\psi _{\tau }\ast f)\right\Vert
_{p(\cdot )}\right\Vert _{L^{q_{0}(\cdot )}((0,1],\frac{d\tau }{\tau
})}\left\Vert
1\right\Vert _{L^{q_{0}^{\prime }(\cdot )}([t/4,\min (1,4t)],\frac{d\tau }{%
\tau })} \\
&\lesssim &t^{(\alpha _{0}-\alpha _{1})^{-}}\left\Vert \left\Vert
\tau ^{-\alpha _{0}(\cdot )}(\psi _{\tau }\ast f)\right\Vert
_{p(\cdot )}\right\Vert _{L^{q_{0}(\cdot )}((0,1],\frac{d\tau }{\tau
})}\lesssim
t^{(\alpha _{0}-\alpha _{1})^{-}}\left\Vert f\right\Vert _{\boldsymbol{B}_{{%
p(\cdot )},q_{0}{(\cdot )}}^{{\alpha }_{0}{(\cdot )}}},
\end{eqnarray*}%
where we have used H\"{o}lder's inequality. Similarly, we find that%
\begin{equation*}
\left\Vert t^{-{\alpha }_{1}{(\cdot )}}h_{t}\right\Vert _{{p(\cdot )}%
}\lesssim \left\{
\begin{array}{ccc}
0, & \text{if} & 0<t<\frac{1}{4}; \\
\left\Vert \Psi \ast f\right\Vert _{{p(\cdot )}}, & \text{if} & \frac{1}{4}%
\leq t\leq 1.%
\end{array}%
\right.
\end{equation*}%
Taking the $L^{q_{1}(\cdot )}((0,1],\frac{dt}{t})$-norm we obtain
the
desired estimate. The estimation of $\left\Vert \Phi \ast f\right\Vert _{{%
p(\cdot )}}$ can be obtained by the decomposition
$\mathrm{\eqref{dec-phi}}$. The proof of Theorem \ref{Elem-emb} is
complete.
\end{proof}

\begin{thm}
\label{Elem-emb1}Let $\alpha \in C_{\mathrm{loc}}^{\log }\left( 
%TCIMACRO{\U{211d} }%
%BeginExpansion
\mathbb{R}
%EndExpansion
^{n}\right) $ and $p\in \mathcal{P}^{\log }\left( 
%TCIMACRO{\U{211d} }%
%BeginExpansion
\mathbb{R}
%EndExpansion
^{n}\right) $. Let $q_{0},q_{1}\in \mathcal{P}\left( 
%TCIMACRO{\U{211d} }%
%BeginExpansion
\mathbb{R}
%EndExpansion
\right) $ satisfy the $\log$-H\"{o}lder decay condition at the origin. If\ $%
q_{0}{(0)}\leq q_{1}{(0)}$, then 
\begin{equation*}
\boldsymbol{B}_{{p(\cdot )},q_{0}{(\cdot )}}^{\alpha {(\cdot )}%
}\hookrightarrow \boldsymbol{B}_{{p(\cdot )},q_{1}{(\cdot )}}^{\alpha {%
(\cdot )}}.
\end{equation*}
\end{thm}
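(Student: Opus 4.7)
The strategy is two-fold: first reduce to Besov spaces with a constant second index via Theorem~\ref{equi-norm}, and then handle the resulting embedding by a dyadic decomposition combined with the elementary sequence-space inclusion $\ell^{q_{0}(0)}\hookrightarrow \ell^{q_{1}(0)}$. The hypothesis that $q_{0}$ and $q_{1}$ satisfy the log-H\"older decay condition at the origin is exactly the log-H\"older continuity at $0$ needed for Theorem~\ref{equi-norm}, which gives
\begin{equation*}
\|f\|_{\boldsymbol{B}^{\alpha (\cdot )}_{p(\cdot ),q_{i}(\cdot )}}\approx \|f\|_{\boldsymbol{B}^{\alpha (\cdot )}_{p(\cdot ),q_{i}(0)}}\qquad (i=0,1).
\end{equation*}
Thus it suffices to prove $\boldsymbol{B}^{\alpha (\cdot )}_{p(\cdot ),q_{0}(0)}\hookrightarrow \boldsymbol{B}^{\alpha (\cdot )}_{p(\cdot ),q_{1}(0)}$ for the constant exponents $q_{0}(0)\le q_{1}(0)$.

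For the constant-index embedding, I would pass to the Peetre maximal function characterization of Theorem~\ref{fun-char}, setting $\tilde F(t):=\|\varphi_{t}^{\ast ,a}t^{-\alpha (\cdot )}f\|_{p(\cdot )}$, and decompose $(0,1]=\bigcup_{v\ge 1}I_{v}$ with $I_{v}=[2^{-v},2^{1-v}]$. The crucial step is the quasi-constancy $\tilde F(s)\approx \tilde F(t)$ for $s,t\in I_{v}$, with constants independent of $v$. Once this is available, $\sup_{t\in I_{v}}\tilde F(t)$ is controlled by any $L^{q_{0}(0)}$-average of $\tilde F$ over $I_{v}$, which gives
\begin{equation*}
\int_{I_{v}}\tilde F(t)^{q_{1}(0)}\frac{dt}{t}\lesssim \Big(\int_{I_{v}}\tilde F(t)^{q_{0}(0)}\frac{dt}{t}\Big)^{q_{1}(0)/q_{0}(0)}.
\end{equation*}
Summing in $v$ and invoking the inclusion $\ell^{1}(\mathbb{N})\hookrightarrow \ell^{q_{1}(0)/q_{0}(0)}(\mathbb{N})$ (valid since $q_{1}(0)/q_{0}(0)\ge 1$) yields
\begin{equation*}
\Big(\int_{0}^{1}\tilde F(t)^{q_{1}(0)}\frac{dt}{t}\Big)^{1/q_{1}(0)}\lesssim \Big(\int_{0}^{1}\tilde F(t)^{q_{0}(0)}\frac{dt}{t}\Big)^{1/q_{0}(0)},
\end{equation*}
and together with the identity of the common low-frequency term $\|\Phi^{\ast ,a}f\|_{p(\cdot )}$ this completes the proof.

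The principal technical hurdle will be the quasi-constancy of $\tilde F$ on each $I_{v}$. The kernels $\varphi_{s}^{\ast ,a}$ and $\varphi_{t}^{\ast ,a}$ arise from genuinely different convolutions $\varphi_{s}\ast f$ and $\varphi_{t}\ast f$, so their comparison must go through a Calder\'on-type reproducing identity at nearby scales---essentially along the lines of Steps~1 and~2 of the proof of Theorem~\ref{loc-mean-char}. The denominator factors $(1+t^{-1}|x-y|)^{a}$ and $(1+s^{-1}|x-y|)^{a}$ are equivalent since $s/t\in [1/2,2]$, and Lemma~\ref{DHR-lemma} handles $t^{-\alpha (\cdot )}$ versus $s^{-\alpha (\cdot )}$; the remaining scale-correction between $\varphi_{s}\ast f$ and $\varphi_{t}\ast f$ is routine given the tools already built up in Sections~3 and~4.
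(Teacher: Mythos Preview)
Your reduction via Theorem~\ref{equi-norm} to the constant-index embedding $\boldsymbol{B}^{\alpha(\cdot)}_{p(\cdot),q_{0}(0)}\hookrightarrow\boldsymbol{B}^{\alpha(\cdot)}_{p(\cdot),q_{1}(0)}$ is exactly how the paper begins. The gap is the quasi-constancy claim $\tilde F(s)\approx\tilde F(t)$ for $s,t\in I_{v}$: this is false in general, and the Calder\'on-type arguments of Theorem~\ref{loc-mean-char} do not rescue it. Since $\mathcal{F}\varphi$ is supported in $\{1/2<|\xi|<2\}$ it must vanish at the boundary of the annulus; take $f\in\mathcal{S}(\mathbb{R}^{n})$ with $\widehat f$ a narrow bump near the sphere $|\xi|=2^{v}$. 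Then $|\mathcal{F}\varphi(t\xi)|$ is of order one when $t=2^{-v}$ (so $t|\xi|\approx 1$) but vanishes as $t\to 2^{1-v}$ (so $t|\xi|\to 2$), forcing $\tilde F(2^{1-v})/\tilde F(2^{-v})\to 0$ as the bump narrows. The reproducing-formula estimates you cite only bound $\varphi_{s}^{\ast,a}f$ by an \emph{integral} of $\varphi_{\tau}^{\ast,a}f$ over nearby scales $\tau$, never by its value at a single $t$; they cannot produce pointwise-in-scale comparability.

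The paper avoids this by proving the weaker---and true---uniform bound $\sup_{t\in(0,1]}\bigl\|t^{-\alpha(\cdot)}(\varphi_{t}\ast f)\bigr\|_{p(\cdot)}\lesssim 1$ after normalizing $\|f\|_{\boldsymbol{B}^{\alpha(\cdot)}_{p(\cdot),q_{0}(0)}}=1$. This is obtained exactly by the reproducing identity you mention: writing $\varphi_{t}\ast f=\int_{t/4}^{\min(1,4t)}\varphi_{t}\ast\psi_{\tau}\ast f\,\tfrac{d\tau}{\tau}+h_{t}$ with a second pair $\{\Psi,\psi\}$, bounding each convolution by $\|\tau^{-\alpha(\cdot)}(\psi_{\tau}\ast f)\|_{p(\cdot)}$ via Lemma~\ref{DHR-lemma}, and then applying H\"older on the short $\tau$-interval. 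Once $\sup_{t}F(t)\lesssim 1$, the embedding is immediate from $F(t)^{q_{1}(0)}=F(t)^{q_{1}(0)-q_{0}(0)}F(t)^{q_{0}(0)}\lesssim F(t)^{q_{0}(0)}$; no dyadic summation or $\ell^{q}$-inclusion is needed.
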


\begin{proof} Let $f\in \boldsymbol{B}_{{p(\cdot )},q_{0}{(\cdot )}%
}^{\alpha {(\cdot )}}$. By the scaling argument, it suffices to
consider the case $\left\Vert f\right\Vert _{\boldsymbol{B}_{{p(\cdot )}%
,q_{0}{(\cdot )}}^{\alpha {(\cdot )}}}=1$ and show that the modular
of $f$ on the left-hand side is bounded. In particular, by Theorem
\ref{equi-norm},
we show that%
\begin{equation*}
\int_{0}^{1}\left\Vert t^{-\alpha (\cdot )}(\varphi _{t}\ast
f)\right\Vert _{p(\cdot )}^{q_{1}{(0)}}\frac{dt}{t}\lesssim 1.
\end{equation*}%
We use the notation in the previous theorem. We need only to prove that%
\begin{equation*}
\left\Vert t^{-\alpha (\cdot )}(\varphi _{t}\ast f)\right\Vert
_{p(\cdot )}^{q_{1}{(0)-}q_{0}{(0)}}\lesssim 1
\end{equation*}%
for any $t\in (0,1]$. We have%
\begin{equation*}
\varphi _{t}\ast f=\int_{t/4}^{\min (1,4t)}\varphi _{t}\ast \psi
_{\tau }\ast f\frac{d\tau }{\tau }+h_{t},\quad t\in (0,1].
\end{equation*}%
From Lemma \ref{DHR-lemma} and the fact the convolution with a
radially decreasing $L^{1}$-function is bounded on $L^{p(\cdot )}$,
we deduce that
\begin{equation*}
\left\Vert t^{-\alpha (\cdot )}(\varphi _{t}\ast f)\right\Vert
_{p(\cdot
)}\lesssim \int_{t/4}^{\min (1,4t)}\left\Vert \tau ^{-{\alpha (\cdot )}%
}(\psi _{\tau }\ast f)\right\Vert _{{p(\cdot )}}\frac{d\tau }{\tau }%
+\left\Vert t^{-{\alpha (\cdot )}}h_{t}\right\Vert _{{p(\cdot )}},
\end{equation*}%
where%
\begin{equation*}
\left\Vert t^{-{\alpha (\cdot )}}h_{t}\right\Vert _{{p(\cdot
)}}\lesssim \left\{
\begin{array}{ccc}
0, & \text{if} & 0<t<\frac{1}{4}; \\
\left\Vert \Psi \ast f\right\Vert _{{p(\cdot )}}, & \text{if} & \frac{1}{4}%
\leq t\leq 1,%
\end{array}%
\right.
\end{equation*}%
which implies that
$$\left\Vert t^{-{\alpha (\cdot )}}h_{t}\right\Vert _{{p(\cdot )}%
}\lesssim 1$$ for any $t\in (0,1]$. Finally, by using H\"{o}lder's
inequality we find that
\begin{equation*}
\int_{t/4}^{\min (1,4t)}\left\Vert \tau ^{-{\alpha (\cdot )}}(\psi
_{\tau
}\ast f)\right\Vert _{{p(\cdot )}}\frac{d\tau }{\tau }\lesssim \Big(%
\int_{t/4}^{\min (1,4t)}\left\Vert \tau ^{-{\alpha (\cdot )}}(\psi
_{\tau
}\ast f)\right\Vert _{{p(\cdot )}}^{q_{0}{(0)}}\frac{d\tau }{\tau }\Big)^{%
\frac{1}{q_{0}{(0)}}}\lesssim 1,
\end{equation*}%
and hence, Theorem \ref{Elem-emb1} is proved.
\end{proof}

We next consider embeddings of Sobolev type. It is well known that%
\begin{equation*}
B_{{p}_{0},q}^{{\alpha }_{0}}\hookrightarrow B_{{p}_{1},q}^{{\alpha }_{1}},
\end{equation*}%
if ${\alpha }_{0}-n/{p}_{0}={\alpha }_{1}-n/{p}_{1}$, where $0<{p}_{0}<{p}%
_{1}\leq \infty $ and $0<q\leq \infty $ (see e.g. \cite[Theorem 2.7.1]{T1}).
In the following theorem we generalize these embeddings to variable exponent
case.

\begin{thm}
\label{Sobolev-emb} Let $\alpha _{0},\alpha _{1}\in C_{\mathrm{loc}}^{\log
}\left( 
%TCIMACRO{\U{211d} }%
%BeginExpansion
\mathbb{R}
%EndExpansion
^{n}\right) $ and $p_{0},p_{1}\in \mathcal{P}^{\log }\left( 
%TCIMACRO{\U{211d} }%
%BeginExpansion
\mathbb{R}
%EndExpansion
^{n}\right) $. Let $q\in \mathcal{P}\left( 
%TCIMACRO{\U{211d} }%
%BeginExpansion
\mathbb{R}
%EndExpansion
\right) $ be \emph{$\log $}-H\"{o}lder continuous at the origin. If ${\alpha 
}_{0}\geq {\alpha }_{1}$ and ${\alpha }_{0}{(x)-}\frac{n}{p_{0}(x)}={\alpha }%
_{1}{(x)-}\frac{n}{p_{1}(x)}$, then 
\begin{equation}
\boldsymbol{B}_{{p}_{0}{(\cdot )},q{(\cdot )}}^{{\alpha }_{0}{(\cdot )}%
}\hookrightarrow \boldsymbol{B}_{{p}_{1}{(\cdot )},q{(\cdot )}}^{{\alpha }%
_{1}{(\cdot )}}.  \label{Sobolev-emd}
\end{equation}
\end{thm}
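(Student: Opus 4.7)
The plan is to reduce \eqref{Sobolev-emd} to a single-scale Bernstein-type inequality and then integrate. By Theorem \ref{equi-norm} I may replace $q(\cdot)$ by the constant $q(0)$ on both sides, which decouples the outer $t$-integration from the inner $L^{p_i(\cdot)}$-norm. It then suffices to prove, for every $t\in(0,1]$, the single-scale estimate
\begin{equation*}
\|t^{-\alpha_1(\cdot)}\varphi_t\ast f\|_{p_1(\cdot)}\;\lesssim\;\|t^{-\alpha_0(\cdot)}\varphi_t\ast f\|_{p_0(\cdot)},
\end{equation*}
together with the low-frequency counterpart $\|\Phi\ast f\|_{p_1(\cdot)}\lesssim \|f\|_{\boldsymbol{B}_{p_0(\cdot),q(\cdot)}^{\alpha_0(\cdot)}}$, which is handled via the Calder\'on reproducing decomposition \eqref{dec-phi} combined with the same single-scale estimate applied at $t\approx 1$. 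Raising to the $q(0)$-th power and integrating with $dt/t$ then yields \eqref{Sobolev-emd}.

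For the single-scale estimate, note that the balance condition together with $\alpha_0\geq\alpha_1$ forces $p_0\leq p_1$ pointwise and gives the identity $t^{-\alpha_1(x)}=t^{-\alpha_0(x)}\,t^{n/p_0(x)-n/p_1(x)}$ with $t^{n/p_0(x)-n/p_1(x)}\leq 1$ for $t\in(0,1]$. Choosing $r\in(0,p_0^{-}]$ and applying Lemma \ref{r-trick} to the frequency-localized function $\varphi_t\ast f$, then pulling $t^{-\alpha_0(x)}$ inside the kernel via Lemma \ref{DHR-lemma}, yields
\begin{equation*}
t^{-\alpha_0(x)}|\varphi_t\ast f(x)|\;\leq\;c\bigl(\eta_{t,m}\ast\bigl(t^{-\alpha_0(\cdot)}|\varphi_t\ast f|\bigr)^{r}(x)\bigr)^{1/r}
\end{equation*}
for any prescribed $m$. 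The task reduces to showing that the $L^{p_1(\cdot)}$-norm of $t^{n/p_0(\cdot)-n/p_1(\cdot)}$ times this right-hand side is dominated by $\|t^{-\alpha_0(\cdot)}\varphi_t\ast f\|_{p_0(\cdot)}$, which is the variable-exponent Nikolskij/Bernstein inequality for functions essentially frequency-localized at scale $1/t$.

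The principal obstacle is the rigorous execution of this Bernstein step: the $x$-dependent factor $t^{n/p_0(x)-n/p_1(x)}$ does not factor out of the $L^{p_1(\cdot)}$-norm. My strategy is to exploit that $1/p_0-1/p_1\in C^{\log}(\mathbb{R}^n)$, and apply an analog of Lemma \ref{DHR-lemma}, with this difference playing the role of the \emph{smoothness} exponent, to transport $t^{n/p_0(x)-n/p_1(x)}$ through the convolution kernel at the cost of enlarging $m$. Once the factor sits inside $\eta_{t,m'}$, one applies the $L^{p_0(\cdot)/r}$-boundedness of this convolution (guaranteed by $p_0\in\mathcal{P}^{\log}$ and $r\leq p_0^-$) together with the variable-exponent H\"older inequality to pass from $L^{p_0(\cdot)/r}$ to $L^{p_1(\cdot)/r}$ and finish. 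Alternatively, discretization to dyadic scales $t=2^{-v}$ paired with the dyadic Sobolev embedding of Almeida--H\"ast\"o \cite{AH} delivers the same bound; both routes rest on the same Bernstein mechanism, which is the technically delicate step.
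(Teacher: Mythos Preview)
Your overall strategy (reduce to a single-scale inequality and then integrate) is exactly what the paper does, and the use of Lemma~\ref{r-trick} together with Lemma~\ref{DHR-lemma} to obtain
\[
t^{-\alpha_0(x)}|\varphi_t\ast f(x)|\lesssim\bigl(\eta_{t,m}\ast(t^{-\alpha_0(\cdot)}|\varphi_t\ast f|)^r(x)\bigr)^{1/r}
\]
is also the paper's starting point. The gap is in your execution of the Bernstein step. After transporting $t^{n/p_0(\cdot)-n/p_1(\cdot)}$ inside the kernel and applying the $L^{p_1(\cdot)/r}$-boundedness of $\eta_{t,m}\ast(\cdot)$, you are left with having to show
\[
\bigl\|t^{n/p_0(\cdot)-n/p_1(\cdot)}\,t^{-\alpha_0(\cdot)}|\varphi_t\ast f|\bigr\|_{p_1(\cdot)}\lesssim\bigl\|t^{-\alpha_0(\cdot)}|\varphi_t\ast f|\bigr\|_{p_0(\cdot)}.
\]
This is not a consequence of H\"older's inequality: for a fixed $t$ the weight is just a bounded function of~$x$, and there is no general embedding $L^{p_0(\cdot)}\hookrightarrow L^{p_1(\cdot)}$ with a pointwise weight. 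Your phrase ``H\"older to pass from $L^{p_0(\cdot)/r}$ to $L^{p_1(\cdot)/r}$'' does not correspond to any valid inequality.

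The paper's mechanism is different and uses the normalization $\delta:=\|t^{-\alpha_0(\cdot)}\varphi_t\ast f\|_{p_0(\cdot)}$ in an essential way. One first proves the \emph{pointwise} bound
\[
\delta^{-1}t^{\,n/p_1(x)-\alpha_1(x)}|\varphi_t\ast f(x)|\lesssim 1
\]
by estimating the convolution $(\eta_{t,m}\ast(\delta^{-1}t^{-\alpha_0(\cdot)}|\varphi_t\ast f|)^d)^{1/d}$ via H\"older in the integral with the auxiliary exponent $h(\cdot)$ defined by $\tfrac{1}{d}=\tfrac{1}{p_0(\cdot)}+\tfrac{1}{h(\cdot)}$; the $p_0(\cdot)$-factor is $\leq 1$ by the choice of $\delta$, and the $h(\cdot)$-factor is controlled by checking the modular directly. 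Then one uses the modular identity
\[
\bigl|\delta^{-1}t^{-\alpha_1(x)}\varphi_t\ast f(x)\bigr|^{p_1(x)}
=\bigl|\delta^{-1}t^{\,n/p_1(x)-\alpha_1(x)}\varphi_t\ast f(x)\bigr|^{p_1(x)-p_0(x)}
\bigl|\delta^{-1}t^{-\alpha_0(x)}\varphi_t\ast f(x)\bigr|^{p_0(x)},
\]
(which holds because $\alpha_0-n/p_0=\alpha_1-n/p_1$), so the first factor is $\lesssim 1$ and integrating the second yields~$\leq 1$. This $L^\infty$-bound/modular-splitting trick is the missing ingredient in your argument; your alternative route via discretization and the Almeida--H\"ast\"o embedding would work, but the direct route you sketch does not close without it.
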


\begin{proof} We prove that%
\begin{equation*}
\left\Vert f\right\Vert _{\boldsymbol{B}_{{p}_{1}{(\cdot )},q{(\cdot )}}^{{%
\alpha }_{1}{(\cdot )}}}\lesssim \left\Vert f\right\Vert _{\boldsymbol{B}_{{p%
}_{0}{(\cdot )},q{(\cdot )}}^{{\alpha }_{0}{(\cdot )}}}
\end{equation*}%
for any $f\in \boldsymbol{B}_{{p}_{0}{(\cdot )},q{(\cdot )}}^{{\alpha }_{0}{%
(\cdot )}}$. Let us prove that%
\begin{equation}
\left\Vert t^{-{\alpha }_{1}{(\cdot )}}(\varphi _{t}\ast f)\right\Vert _{{p}%
_{1}{(\cdot )}}\lesssim \left\Vert t^{-{\alpha }_{0}{(\cdot
)}}(\varphi _{t}\ast f)\right\Vert _{{p}_{0}{(\cdot )}}=\delta .
\label{key-est}
\end{equation}%
This is equivalent to%
\begin{equation*}
\left\Vert \delta ^{-1}t^{-{\alpha }_{1}{(\cdot )}}(\varphi _{t}\ast
f)\right\Vert _{{p}_{1}{(\cdot )}}\lesssim 1.
\end{equation*}%
By Lemma \ref{r-trick} we have for any $m>n$, $d>0$
\begin{equation*}
|\varphi _{t}\ast f(x)|\leq c(\eta _{t,m}\ast |\varphi _{t}\ast
f|^{d}(x))^{1/d}.
\end{equation*}%
Hence%
\begin{eqnarray*}
&&\delta ^{-1}t^{{\frac{n}{{p}_{1}{(x)}}}-{\alpha
}_{1}{(x)}}|\varphi
_{t}\ast f(x)| \\
&\leq &c(\eta _{t,m-c_{\log }(\alpha _{1})-c_{\log }(1/p_{1})}\ast
\delta ^{-d}t^{-{\alpha }_{1}{(\cdot )d+\frac{nd}{{p}_{1}{(\cdot
)}}}}|\varphi
_{t}\ast f|^{d}(x))^{1/d} \\
&\leq &c\left\Vert t^{n-\frac{n}{d}+\frac{n}{{p}_{0}{(\cdot )}}}\eta
_{t,(m-c_{\log }(\alpha _{1})-c_{\log }(1/p_{1}))/d}(x-\cdot )\right\Vert _{h%
{(\cdot )}}\left\Vert \delta ^{-d}t^{-{\alpha }_{0}{(\cdot
)}}(\varphi _{t}\ast f)\right\Vert _{{p}_{0}{(\cdot )}}.
\end{eqnarray*}%
where $$\frac{1}{d}=\frac{1}{p_{0}{(\cdot )}}+\frac{1}{h{(\cdot
)}}.$$ The
second norm on the right hand side is bounded by $1$ due to the choice of $%
\delta $. To show that the first norm is also bounded, we
investigate the
corresponding modular:%
\begin{eqnarray*}
&&\varrho _{h(\cdot )}(t^{n-\frac{n}{d}+\frac{n}{{p}_{0}{(\cdot
)}}}\eta
_{t,(m-c_{\log }(\alpha _{1})-c_{\log }(1/p_{1}))/d}(x-\cdot )) \\
&=&\int_{\mathbb{R}^{n}}\frac{t^{-n}}{\left( 1+t^{-1}\left\vert
x-z\right\vert \right) ^{(m-c_{\log }(\alpha _{1})-c_{\log }(1/p_{1}))h{(z)}%
/d}}dz<\infty
\end{eqnarray*}%
for $m>0$ large enough. Now we have
\begin{eqnarray*}
&&\left\vert \delta ^{-1}t^{-{\alpha }_{1}{(x)}}\varphi _{t}\ast
f(x)\right\vert ^{{p}_{1}{(x)}} \\
&=&\left\vert \delta ^{-1}t^{\frac{n}{{p}_{1}{(x)}}-{\alpha }_{1}{(x)}%
}\varphi _{t}\ast f(x)\right\vert
^{{p}_{1}{(x)-p}_{0}{(x)}}\left\vert
\delta ^{-1}t^{-{\alpha }_{0}{(x)}}\varphi _{t}\ast f(x)\right\vert ^{{p}_{0}%
{(x)}} \\
&\lesssim &\left\vert \delta ^{-1}t^{-{\alpha }_{0}{(x)}}\varphi
_{t}\ast f(x)\right\vert ^{{p}_{0}{(x)}}.
\end{eqnarray*}%
Integrating this inequality over $%
%TCIMACRO{\U{211d} }%
%BeginExpansion
\mathbb{R}^{n}$ and taking into account the definition of $\delta $,
we conclude that the claim is true. The proof of Theorem
\ref{Sobolev-emb} is complete by taking in
$\mathrm{\eqref{key-est}}$
the $%
L^{q(\cdot )}((0,1],\frac{dt}{t})$-norm.
\end{proof}

Let $\alpha \in C_{\mathrm{loc}}^{\log }\left( 
%TCIMACRO{\U{211d} }%
%BeginExpansion
\mathbb{R}
%EndExpansion
^{n}\right) $ and $p\in \mathcal{P}^{\log }\left( 
%TCIMACRO{\U{211d} }%
%BeginExpansion
\mathbb{R}
%EndExpansion
^{n}\right) $. Let $q\in \mathcal{P}\left( 
%TCIMACRO{\U{211d} }%
%BeginExpansion
\mathbb{R}
%EndExpansion
\right) $ be \emph{$\log $}-H\"{o}lder continuous at the origin. From $%
\mathrm{\eqref{Sobolev-emd}}$, we obtain%
\begin{equation*}
\boldsymbol{B}_{p(\cdot ),q(\cdot )}^{\alpha (\cdot )}\hookrightarrow 
\boldsymbol{B}_{p^{+},q(\cdot )}^{\alpha (\cdot )+n/p^{+}-n/p(\cdot
)}\hookrightarrow \boldsymbol{B}_{p^{+},q(\cdot )}^{(\alpha
+n/p^{+}-n/p)^{-}}\hookrightarrow \boldsymbol{B}_{p^{+},\infty }^{(\alpha
+n/p^{+}-n/p)^{-}-\varepsilon }\hookrightarrow \mathcal{S}^{\prime }(\mathbb{%
R}^{n}),
\end{equation*}%
where $0<\varepsilon <(\alpha +n/p^{+}-n/p)^{-}$. Let $\alpha _{0}\in 
%TCIMACRO{\U{211d} }%
%BeginExpansion
\mathbb{R}
%EndExpansion
$ be such that $\alpha _{0}>(\alpha +n/p^{-}-n/p)^{+}$. We have%
\begin{equation*}
\mathcal{S}(\mathbb{R}^{n})\hookrightarrow \boldsymbol{B}_{p^{-},q^{+}}^{%
\alpha _{0}}\hookrightarrow \boldsymbol{B}_{p^{-},q(\cdot )}^{\alpha
_{0}-n/p^{-}+n/p(\cdot )}\hookrightarrow \boldsymbol{B}_{p(\cdot ),q(\cdot
)}^{\alpha (\cdot )}.
\end{equation*}%
Thus we have:

\begin{thm}
Let $\alpha \in C_{\mathrm{loc}}^{\log }\left( 
%TCIMACRO{\U{211d} }%
%BeginExpansion
\mathbb{R}
%EndExpansion
^{n}\right) $ and $p\in \mathcal{P}^{\log }\left( 
%TCIMACRO{\U{211d} }%
%BeginExpansion
\mathbb{R}
%EndExpansion
^{n}\right) $. Let $q\in \mathcal{P}\left( 
%TCIMACRO{\U{211d} }%
%BeginExpansion
\mathbb{R}
%EndExpansion
\right) $ be \emph{$\log $}-H\"{o}lder continuous at the origin. Then 
\begin{equation*}
\mathcal{S}(\mathbb{R}^{n})\hookrightarrow \boldsymbol{B}_{p(\cdot ),q(\cdot
)}^{\alpha (\cdot )}\hookrightarrow \mathcal{S}^{\prime }(\mathbb{R}^{n}).
\end{equation*}
\end{thm}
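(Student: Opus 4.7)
This theorem is merely the conclusion of the two embedding chains displayed in the paragraph preceding the statement. The plan is to justify each arrow using the results already established, with no substantially new work required.

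For $\boldsymbol{B}_{p(\cdot),q(\cdot)}^{\alpha(\cdot)}\hookrightarrow\mathcal{S}'(\mathbb{R}^n)$, the strategy is to reduce to a classical Besov space with constant exponents, for which the embedding into $\mathcal{S}'$ is well known. The first arrow sends the variable integrability $p(\cdot)$ to the constant $p^+$ via Theorem \ref{Sobolev-emb}; the hypotheses are satisfied because $p(\cdot)\le p^+$, the smoothness is chosen precisely so that $\alpha(\cdot)-n/p(\cdot)$ is preserved, and $\alpha(\cdot)+n/p^+-n/p(\cdot)\le\alpha(\cdot)$ pointwise. The second arrow discards the remaining variability in the smoothness by replacing $\alpha(\cdot)+n/p^+-n/p(\cdot)$ by its essential infimum; this is immediate from the definition since $t^{-\alpha_1}\le t^{-\alpha_0(x)}$ on $(0,1]$ whenever $\alpha_0(x)\ge\alpha_1$. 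The third arrow is Theorem \ref{Elem-emb}, which allows us to trade the summability index $q(\cdot)$ for $\infty$ at the cost of an arbitrarily small smoothness drop $\varepsilon>0$, satisfying the hypothesis $(\alpha_0-\alpha_1)^-=\varepsilon>0$. The final link, $B_{p^+,\infty}^{s}\hookrightarrow\mathcal{S}'(\mathbb{R}^n)$, is a classical result for constant-exponent Besov spaces.

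For the reverse direction $\mathcal{S}(\mathbb{R}^n)\hookrightarrow\boldsymbol{B}_{p(\cdot),q(\cdot)}^{\alpha(\cdot)}$, we start from the classical embedding $\mathcal{S}(\mathbb{R}^n)\hookrightarrow B_{p^-,q^+}^{\alpha_0}$ and move toward the target. The choice $\alpha_0>(\alpha+n/p^--n/p)^+$ is made precisely to guarantee that after lowering the smoothness to the variable function $\alpha_0-n/p^-+n/p(\cdot)$ and then applying Theorem \ref{Sobolev-emb} to raise the integrability from $p^-$ to $p(\cdot)$, the resulting smoothness still dominates $\alpha(\cdot)$ pointwise, while the differential $\alpha-n/p$ is preserved across the Sobolev step. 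The summability index is adjusted from the constant $q^+$ to the variable $q(\cdot)$ by combining Theorems \ref{Elem-emb} and \ref{Elem-emb1}, where a tiny strict drop in smoothness (absorbed into the comfortable slack provided by $\alpha_0>(\alpha+n/p^--n/p)^+$) makes the hypothesis $(\alpha_0-\alpha_1)^->0$ of Theorem \ref{Elem-emb} available whenever needed.

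The only substantive checks are the preservation of the differential $\alpha-n/p$ in each invocation of Theorem \ref{Sobolev-emb} and the strict positivity of $(\alpha_0-\alpha_1)^-$ in the applications of Theorem \ref{Elem-emb}; both are guaranteed by how the intermediate exponents in the two chains were chosen. No fundamentally new obstacle arises, and the theorem is assembled by stringing together the preceding Sobolev and elementary-embedding results together with the classical facts $\mathcal{S}\hookrightarrow B_{p,q}^{s}\hookrightarrow\mathcal{S}'$ in the fixed-exponent case.
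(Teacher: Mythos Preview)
Your proposal is correct and is exactly the paper's approach: the theorem is stated immediately after the two embedding chains, and the proof consists precisely of justifying each arrow via Theorems \ref{Elem-emb}, \ref{Elem-emb1}, \ref{Sobolev-emb} together with the classical constant-exponent facts $\mathcal{S}\hookrightarrow B_{p,q}^{s}\hookrightarrow\mathcal{S}'$. Your reading of the intermediate exponents and of why the hypotheses of those theorems are met is accurate.
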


\section{Atomic decomposition}

The idea of atomic decompositions goes back to M. Frazier and B. Jawerth in
a series of papers \cite{FJ86}, \cite{FJ90} (see also \cite{T3}). The main
goal of this section is to prove an atomic decomposition result for $%
\boldsymbol{B}_{p(\cdot ),q(\cdot )}^{\alpha (\cdot )}$. Atoms are the
building blocks for the atomic decomposition.

\begin{defn}
\label{Atom-Def} Let $K, \, L+1 \in \mathbb{N}_{0}$ and $\gamma >1$. A $K$%
-times continuous differentiable function $a\in C^{K}(\mathbb{R}^{n})$ is
called $[K,L]$-atom centered at $Q_{v,m}$, $v\in \mathbb{N}_{0}$ and $m\in 
\mathbb{Z}^{n}$, if 
\begin{equation}
\mathrm{supp}\text{ }a\subseteq \gamma Q_{v,m},  \label{supp-cond}
\end{equation}
\begin{equation}
|D^{\beta }a(x)|\leq 2^{v(|\beta |+1/2)}\text{,\quad for\quad }0\leq |\beta
|\leq K,x\in \mathbb{R}^{n} ,  \label{diff-cond}
\end{equation}%
and 
\begin{equation}
\int_{\mathbb{R}^{n}}x^{\beta }a(x)dx=0 \text{\quad for\quad }0\leq |\beta
|\leq L\text{ and }v\geq 1.  \label{mom-cond}
\end{equation}
\end{defn}

If the atom $a$ is located at $Q_{v,m}$, i.e., if it fulfills $\mathrm{%
\eqref{supp-cond}}$, then we denote it by $a_{v,m}$. For $v=0$ or $L=-1$ the
moment\ conditions\ $\mathrm{\eqref{mom-cond}}$ is not required.\newline

For proving the decomposition by atoms we need the following lemma, see
Frazier and Jawerth \cite[Lemma 3.3]{FJ86}.

\begin{lem}
\label{FJ-lemma}Let $\{\mathcal{F}\Phi ,\mathcal{F}\varphi \}$ be a
resolution of unity and let $\varrho _{v,m}$ be an $\left[ K,L\right] $%
-atom. If $j\in \mathbb{N}_{0}$ and $2^{-j}\leq t\leq 2^{1-j}$, then%
\begin{equation*}
\left\vert \varphi _{t}\ast \varrho _{v,m}(x)\right\vert \leq c\text{ }%
2^{(v-j)K+vn/2}\left( 1+2^{v}\left\vert x-x_{Q_{v,m}}\right\vert \right)
^{-M}
\end{equation*}%
if $v\leq j$ and%
\begin{equation*}
\left\vert \varphi _{t}\ast \varrho _{v,m}(x)\right\vert \leq c\text{ }%
2^{(j-v)(L+n+1)+vn/2}\left( 1+2^{j}\left\vert x-x_{Q_{v,m}}\right\vert
\right) ^{-M}
\end{equation*}%
if $v\geq j$, where $M$ is sufficiently large and $\varphi
_{t}=t^{-n}\varphi (\frac{\cdot }{t})$. Moreover%
\begin{equation*}
\left\vert \Phi \ast \varrho _{v,m}(x)\right\vert \leq c\text{ }%
2^{-v(L+n+1)+vn/2}\left( 1+\left\vert x-x_{Q_{v,m}}\right\vert \right) ^{-M}.
\end{equation*}
\end{lem}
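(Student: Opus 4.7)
The proof follows the Frazier--Jawerth argument: two complementary cancellation properties are available---the Fourier support of $\varphi$ forces $\varphi_{t}\ast Q\equiv 0$ for every polynomial $Q$ (so $\varphi$ has vanishing moments of all orders), while the atom $\varrho_{v,m}$ has vanishing moments up to order $L$ by \eqref{mom-cond}---and the plan is to use whichever cancellation is more profitable for the given ordering of $v$ and $j$.

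For $v\leq j$, let $P_{K-1}$ denote the Taylor polynomial of $\varrho_{v,m}$ at $x$ truncated at order $K-1$. Using the moment property of $\varphi$, I would write
\begin{equation*}
\varphi_{t}\ast \varrho_{v,m}(x)=\int_{\gamma Q_{v,m}}\varphi_{t}(x-y)\bigl[\varrho_{v,m}(y)-P_{K-1}(y)\bigr]\,dy,
\end{equation*}
bound the bracket by $c\,2^{v(K+n/2)}|x-y|^{K}$ via Taylor's theorem combined with \eqref{diff-cond}, and absorb $|x-y|^{K}$ into the decay of $\varphi_{t}$ using $|x-y|^{K}|\varphi_{t}(x-y)|\lesssim t^{K}\eta_{t,M+K+1}(x-y)$. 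With $t\approx 2^{-j}$ this produces the factor $2^{(v-j)K+vn/2}$, and for $y\in\gamma Q_{v,m}$ with $v\leq j$ one has $(1+t^{-1}|x-y|)^{-1}\lesssim (1+2^{j}|x-x_{Q_{v,m}}|)^{-1}\leq(1+2^{v}|x-x_{Q_{v,m}}|)^{-1}$, which yields the first estimate. For $v\geq j$ I would swap roles and use the moment conditions of the atom instead: if $T_{L}$ is the Taylor polynomial of $y\mapsto\varphi_{t}(x-y)$ at $y=x_{Q_{v,m}}$ of degree $L$, then $\int T_{L}\varrho_{v,m}\,dy=0$, so
\begin{equation*}
\varphi_{t}\ast\varrho_{v,m}(x)=\int_{\gamma Q_{v,m}}\bigl[\varphi_{t}(x-y)-T_{L}(y)\bigr]\varrho_{v,m}(y)\,dy,
\end{equation*}
and Taylor's theorem bounds the bracket by $c\,|y-x_{Q_{v,m}}|^{L+1}t^{-n-L-1}(1+2^{j}|x-x_{Q_{v,m}}|)^{-M}$; inserting $|y-x_{Q_{v,m}}|\lesssim 2^{-v}$, $|\varrho_{v,m}|\lesssim 2^{vn/2}$, $|\gamma Q_{v,m}|\lesssim 2^{-vn}$ and $t\approx 2^{-j}$ and collecting exponents gives $2^{(j-v)(L+n+1)+vn/2}$ times the desired decay.

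The estimate for $\Phi\ast\varrho_{v,m}$ is exactly the $v\geq j$ case at the fixed scale $j=0$: for $v\geq 1$ the moment argument applied to $\Phi$ (whose rapid Schwartz decay replaces that of $\varphi_{t}$) yields the bound $2^{-v(L+n+1)+vn/2}(1+|x-x_{Q_{v,m}}|)^{-M}$, while for $v=0$ it is immediate from the compact support of $\varrho_{0,m}$, the trivial bound $|\varrho_{0,m}|\lesssim 1$, and the rapid decay of $\Phi$. The main obstacle in this plan is the bookkeeping of the $2^{vn/2}$ factor arising from the atom normalization and the careful matching of the $t$-scaling of the kernel with the $2^{-v}$-support of the atom; choosing $M$ sufficiently large at the outset (so that $\int\eta_{t,M'}$ stays bounded after losing $K$ or $L+1$ powers to the Taylor remainder) makes the remaining arithmetic routine.
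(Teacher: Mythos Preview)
Your outline is precisely the Frazier--Jawerth argument the paper invokes (the paper gives no proof of its own, only the reference to \cite[Lemma~3.3]{FJ86}), and the case $v\ge j$ as well as the $\Phi$ estimate are handled correctly. One technical slip in the case $v\le j$: the displayed identity with domain of integration $\gamma Q_{v,m}$ is false, because the Taylor polynomial $P_{K-1}$ centred at $x$ is not supported there; the moment condition on $\varphi$ kills $\int_{\mathbb{R}^n}\varphi_t(x-y)P_{K-1}(y)\,dy$, so the correct identity has the integral over all of $\mathbb{R}^n$. Consequently your decay argument ``for $y\in\gamma Q_{v,m}$'' does not apply directly. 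The standard repair is a short case split: if $x\in 2\gamma Q_{v,m}$ the weight $(1+2^{v}|x-x_{Q_{v,m}}|)^{-M}$ is comparable to $1$ and only the magnitude bound (which your $\mathbb{R}^n$-integral gives) is needed; if $x\notin 2\gamma Q_{v,m}$ then every derivative of $\varrho_{v,m}$ vanishes at $x$, hence $P_{K-1}\equiv 0$, the integral \emph{does} localize to $\gamma Q_{v,m}$, and your pointwise comparison $(1+t^{-1}|x-y|)^{-1}\lesssim(1+2^{j}|x-x_{Q_{v,m}}|)^{-1}$ is legitimate---one then spends $n$ extra powers of this factor to absorb the $2^{(j-v)n}$ arising from $t^{-n}|\gamma Q_{v,m}|$. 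With this adjustment the bookkeeping is indeed routine.
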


Let $p\in \mathcal{P}\left( 
%TCIMACRO{\U{211d} }%
%BeginExpansion
\mathbb{R}
%EndExpansion
^{n}\right) $, $q\in \mathcal{P}\left( 
%TCIMACRO{\U{211d} }%
%BeginExpansion
\mathbb{R}
%EndExpansion
\right) $\ and $\alpha $ $:\mathbb{R}^{n}\rightarrow \mathbb{R}$. Then for
any complex valued sequences $\lambda =\{\lambda _{v,m}\in \mathbb{C}%
\}_{v\in \mathbb{N}_{0},m\in \mathbb{Z}^{n}}$ we define%
\begin{equation*}
\boldsymbol{b}_{p(\cdot ),q(\cdot )}^{\alpha (\cdot )}:=\Big\{\lambda
:\left\Vert \lambda \right\Vert _{\boldsymbol{b}_{p(\cdot ),q(\cdot
)}^{\alpha (\cdot )}}<\infty \Big\},
\end{equation*}%
where%
\begin{eqnarray*}
\left\Vert \lambda \right\Vert _{\boldsymbol{b}_{p(\cdot ),q(\cdot
)}^{\alpha (\cdot )}} \text{:=}&& \big\|\sum\limits_{m\in \mathbb{Z}%
^{n}}\lambda _{0,m}\chi _{0,m}\big\|_{p(\cdot )} \\
&& +\Big\|\Big(\big\|t^{-(\alpha \left( \cdot \right) +n/2)-\frac{1}{q(t)}%
}\sum\limits_{m\in \mathbb{Z}^{n}}\lambda _{v,m}\chi _{v,m}\big\|_{p(\cdot
)}\chi _{\left[ 2^{-v},2^{1-v}\right] }\Big)_{v}\Big\|_{\ell _{>}^{q(\cdot
)}(L^{q(\cdot )})}.
\end{eqnarray*}%
Here $\chi _{v,m}$ is the characteristic function of the cube $Q_{v,m}$. Let 
$p\in \mathcal{P}(\mathbb{R}^{n})$ and $\alpha \in C_{\mathrm{loc}}^{\log
}\left( 
%TCIMACRO{\U{211d} }%
%BeginExpansion
\mathbb{R}
%EndExpansion
^{n}\right) $\textit{.\ }Let $q\in \mathcal{P}(\mathbb{R})$\ be log-H\"{o}%
lder continuous at the origin. From Lemma \ref{Key-lemma1.2} we have%
\begin{equation*}
\left\Vert \lambda \right\Vert _{\boldsymbol{b}_{p(\cdot ),q(\cdot
)}^{\alpha (\cdot )}}\approx \big\|\sum\limits_{m\in \mathbb{Z}^{n}}\lambda
_{0,m}\chi _{0,m}\big\|_{p(\cdot )}+\Big(\sum_{v=1}^{\infty }\big\|%
2^{v(\alpha \left( \cdot \right) -n/2)}\sum\limits_{m\in \mathbb{Z}%
^{n}}\lambda _{v,m}\chi _{v,m}\big\|_{p(\cdot )}^{q(0)}\Big)^{\frac{1}{q(0)}%
}.
\end{equation*}

Now we are now in a position to state the atomic decomposition theorem.

\begin{thm}
\label{atomic-dec} Let $\alpha \in C_{\mathrm{loc}}^{\log }\left( 
%TCIMACRO{\U{211d} }%
%BeginExpansion
\mathbb{R}
%EndExpansion
^{n}\right) $ and $p\in \mathcal{P}^{\log }\left( 
%TCIMACRO{\U{211d} }%
%BeginExpansion
\mathbb{R}
%EndExpansion
^{n}\right) $. Let $q\in \mathcal{P}\left( 
%TCIMACRO{\U{211d} }%
%BeginExpansion
\mathbb{R}
%EndExpansion
\right) $ be \emph{$\log $}-H\"{o}lder continuous at the origin\ with $1\leq
q^{-}\leq q^{+}<\infty $\textit{. }Let $K,L+1\in \mathbb{N}_{0}$ such that%
\begin{equation}
K\geq \lbrack \alpha ^{+}]+1,  \label{K,L,B-F-cond}
\end{equation}%
and%
\begin{equation}
L\geq \max (-1,[-\alpha ^{-}]).  \label{K,L,B-cond}
\end{equation}%
Then $f\in \mathcal{S}^{\prime }(\mathbb{R}^{n})$ belongs to $\boldsymbol{B}%
_{p(\cdot ),q(\cdot )}^{\alpha (\cdot )}$, if and only if it is represented
as%
\begin{equation}  \label{new-rep}
f=\sum\limits_{v=0}^{\infty }\sum\limits_{m\in \mathbb{Z}^{n}}\lambda
_{v,m}\varrho _{v,m},\text{ \ \ \ \ converging in } \mathcal{S}^{\prime }(%
\mathbb{R}^{n}),
\end{equation}%
where $\varrho _{v,m}$ are $\left[ K,L\right] $-atoms and $\lambda
=\{\lambda _{v,m}\in \mathbb{C}\}_{v\in \mathbb{N}_{0},m\in \mathbb{Z}%
^{n}}\in \boldsymbol{b}_{p(\cdot ),q(\cdot )}^{\alpha (\cdot )}$.
Furthermore, $\inf \left\Vert \lambda \right\Vert _{\boldsymbol{b}_{p(\cdot
),q(\cdot )}^{\alpha (\cdot )}}$ is an equivalent norm in $\boldsymbol{B}%
_{p(\cdot ),q(\cdot )}^{\alpha (\cdot )}$, where the infimum is taken over
admissible representations $\mathrm{\eqref{new-rep}}$.
\end{thm}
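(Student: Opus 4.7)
The proof splits into the two implications, with the easier direction being sufficiency.

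For the \textbf{sufficiency direction}, suppose $f=\sum_{v,m}\lambda_{v,m}\varrho_{v,m}$ converges in $\mathcal{S}'(\mathbb{R}^n)$ with finite $\boldsymbol{b}$-norm. Applying $\varphi_t$ termwise for $t\in[2^{-j},2^{1-j}]$ and invoking Lemma~\ref{FJ-lemma} yields the pointwise bound
$$|\varphi_t\ast\varrho_{v,m}(x)|\lesssim 2^{vn/2}\,2^{-|v-j|N}\,(1+2^{\min(v,j)}|x-x_{Q_{v,m}}|)^{-M}$$
with $N$ linear in $K$ or $L+1$ (and a small correction involving $\alpha^\pm$), which by \eqref{K,L,B-F-cond}--\eqref{K,L,B-cond} is strictly positive once $M$ is fixed large. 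Combined with Lemma~\ref{Conv-est1} this rewrites as
$$|\varphi_t\ast\varrho_{v,m}(x)|\lesssim 2^{-|v-j|N}\,\eta_{\min(v,j),M'}\ast(2^{vn/2}\chi_{Q_{v,m}})(x).$$
Summing in $m$, multiplying by $t^{-\alpha(x)}$ and using Lemma~\ref{DHR-lemma} to push the variable weight inside the convolution kernel, the bound that convolution with a radially decreasing $L^1$-function is bounded on $L^{p(\cdot)}$ (since $p\in\mathcal{P}^{\log}$) yields
$$\|t^{-\alpha(\cdot)}\varphi_t\ast f\|_{p(\cdot)}\lesssim \sum_{v\geq 0}2^{-|v-j|N'}\big\|2^{v(\alpha(\cdot)-n/2)}\sum_m\lambda_{v,m}\chi_{Q_{v,m}}\big\|_{p(\cdot)}$$
(with an analogous estimate for $\Phi\ast f$). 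Multiplying by $t^{-1/q(t)}\chi_{[2^{-j},2^{1-j}]}$, taking the $\ell_>^{q(\cdot)}(L^{q(\cdot)})$-norm, and applying the Hardy-type inequality of Lemma~\ref{Key-lemma1} together with Lemma~\ref{Key-lemma1.2} controls $\|f\|_{\boldsymbol{B}_{p(\cdot),q(\cdot)}^{\alpha(\cdot)}}$ by $\|\lambda\|_{\boldsymbol{b}_{p(\cdot),q(\cdot)}^{\alpha(\cdot)}}$.

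For the \textbf{necessity direction}, I would construct the atoms from the local-means characterization (Theorem~\ref{loc-mean-char}). Choose Schwartz functions $k_0,k,\Lambda,\psi$ satisfying \eqref{T-cond1}--\eqref{moment-cond} (with $S\geq L$) and assemble a reproducing formula
$$f=\sum_m c_{0,m}(f)\,\Psi_{0,m}+\sum_{v=1}^\infty\sum_m c_{v,m}(f)\,\Psi_{v,m}$$
by discretizing the continuous Calder\'on identity \eqref{Ass2} on the dyadic grid $\mathcal{Q}$ and localizing with a smooth partition of unity, where $\Psi_{v,m}$ is a fixed translated-and-dilated Schwartz bump supported in $\gamma Q_{v,m}$ with $L$ vanishing moments. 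Setting $\lambda_{v,m}=C\,c_{v,m}(f)$ and $\varrho_{v,m}=C^{-1}\Psi_{v,m}$ with a suitable normalizing constant $C$, the functions $\varrho_{v,m}$ satisfy \eqref{supp-cond}--\eqref{mom-cond} for the required $K,L$. The coefficients obey the pointwise bound
$$|\lambda_{v,m}|\lesssim 2^{vn/2}\sup_{y\in c_0 Q_{v,m}}|k_{2^{-v}}\ast f(y)|\lesssim 2^{vn/2}\,k_{2^{-v}}^{\ast,a}f(x),\qquad x\in Q_{v,m},$$
so by Lemma~\ref{DHR-lemma}
$$\big\|2^{v(\alpha(\cdot)-n/2)}\sum_m\lambda_{v,m}\chi_{Q_{v,m}}\big\|_{p(\cdot)}\lesssim \|k_{2^{-v}}^{\ast,a}2^{v\alpha(\cdot)}f\|_{p(\cdot)}.$$
Summing $q(0)$-th powers over $v$, combining with the $v=0$ estimate, and invoking Theorem~\ref{loc-mean-char1} bounds $\|\lambda\|_{\boldsymbol{b}_{p(\cdot),q(\cdot)}^{\alpha(\cdot)}}$ by $\|f\|_{\boldsymbol{B}_{p(\cdot),q(\cdot)}^{\alpha(\cdot)}}$; convergence of \eqref{new-rep} in $\mathcal{S}'(\mathbb{R}^n)$ then follows from the embedding $\boldsymbol{B}_{p(\cdot),q(\cdot)}^{\alpha(\cdot)}\hookrightarrow\mathcal{S}'(\mathbb{R}^n)$ established at the end of Section~4.

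\textbf{The main obstacle} is orchestrating the interplay between the variable exponents $\alpha(\cdot)$, $p(\cdot)$, $q(\cdot)$ and the geometric-decay factor $2^{-|v-j|N}$ within the mixed-norm $\ell_>^{q(\cdot)}(L^{q(\cdot)})$: scalars cannot be pulled freely past either the $L^{p(\cdot)}$ or the outer norm, so the precise placement of Lemma~\ref{DHR-lemma}, at the moment when $t^{-\alpha(\cdot)}$ is moved past convolution kernels, is essential, and the reduction of the outer mixed norm to the constant exponent $q(0)$ via Lemma~\ref{Key-lemma1.2} is what makes a single application of the Hardy-type inequality work. In the necessity direction, the additional delicate point is arranging the moment conditions \eqref{mom-cond} of the atoms, which requires choosing $\Psi$ (and hence the reproducing kernels $k,\psi$) with enough vanishing moments compatible with \eqref{K,L,B-cond}.
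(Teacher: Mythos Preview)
Your sufficiency argument matches the paper's Step~3 closely: the same combination of Lemma~\ref{FJ-lemma}, Lemmas~\ref{Conv-est1}--\ref{Conv-est2}, Lemma~\ref{DHR-lemma} to move $t^{-\alpha(\cdot)}$ past the convolution, boundedness of radially decreasing convolutions on $L^{p(\cdot)}$, and the Hardy-type Lemma~\ref{Key-lemma1} applied in the mixed norm.

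For the necessity direction your construction differs from the paper's, and this is where your sketch has a gap. The paper does \emph{not} discretize via a partition of unity with fixed translated--dilated bumps $\Psi_{v,m}$. Instead it partitions the integration domain in the continuous reproducing formula
\[
f=\Psi\ast\Phi\ast f+\int_0^1\psi_t\ast\varphi_t\ast f\,\frac{dt}{t}
\]
into the boxes $Q_{v,m}\times[2^{-v},2^{1-v}]$ and defines each atom $\varrho_{v,m}$ as the corresponding piece of the integral, normalized by the $L^2$-type coefficient
\[
\lambda_{v,m}=C_\varphi\Big(\int_{2^{-v}}^{2^{1-v}}\!\int_{Q_{v,m}}|\psi_t\ast f(y)|^{2}\,dy\,\frac{dt}{t}\Big)^{1/2}.
\]
The atoms are therefore $f$-dependent; the derivative bounds \eqref{diff-cond} follow from Cauchy--Schwarz in $dy\,\frac{dt}{t}$, and the moment conditions \eqref{mom-cond} are inherited directly from those of $\varphi$. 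Your route via ``fixed bump plus smooth partition of unity'' is problematic as written: multiplying by a partition of unity destroys vanishing moments, and obtaining a genuine discrete reproducing formula with fixed compactly supported atoms having $L$ moments is a separate nontrivial fact you have not supplied. Once the paper's construction is in place, the coefficient estimate you state (domination by the Peetre maximal function on $Q_{v,m}$) is exactly what the paper proves in its Step~2, leading to Theorem~\ref{loc-mean-char} as you anticipated.

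Finally, the convergence of \eqref{new-rep} in $\mathcal{S}'(\mathbb{R}^n)$ does not follow just from the embedding $\boldsymbol{B}_{p(\cdot),q(\cdot)}^{\alpha(\cdot)}\hookrightarrow\mathcal{S}'(\mathbb{R}^n)$: that embedding tells you $f\in\mathcal{S}'$, not that the partial sums converge. The paper treats this separately in the appendix by Taylor-expanding the test function about $x_{Q_{v,m}}$ and using the moment conditions \eqref{mom-cond} to gain the factor $2^{-v(L+1)}$.
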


\begin{proof} %[Proof of Theorem \ref{atomic-dec}]
The idea of proof comes from \cite[Theorem 6]{FJ86} and
\cite[Theorem 4.3]{D6}. The proof of convergence of \eqref{new-rep}
in $\mathcal{S}^{\prime }(\mathbb{R}^{n})$ is
postponed to appendix.\\

We divide the proof into three steps.\\

\textit{Step 1}. Assume that $f\in \boldsymbol{B}_{p(\cdot ),q(\cdot
)}^{\alpha (\cdot )}$ and let $\Phi $ and $\varphi $ satisfy%
\begin{equation}
\text{supp}\, \mathcal{F}\Phi \subset \overline{B(0,2)}\text{ and }|\mathcal{F}%
\Phi (\xi )|\geq c\text{ if }|\xi |\leq \frac{5}{3}  \label{Ass3}
\end{equation}%
and
\begin{equation}
\text{supp}\, \mathcal{F}\varphi \subset \overline{B(0,2)}\backslash B(0,1/2)%
\text{ and }|\mathcal{F}\varphi (\xi )|\geq c\text{ if
}\frac{3}{5}\leq |\xi |\leq \frac{5}{3}.  \label{Ass4}
\end{equation}%
There exist \ functions $\Psi \in \mathcal{S}(\mathbb{R}^{n})$ satisfying $%
\mathrm{\eqref{Ass3}}$ and $\psi \in \mathcal{S}(\mathbb{R}^{n})$
satisfying
$\mathrm{\eqref{Ass4}}$ such that%
\begin{equation*}
f=\Psi \ast \Phi \ast f+\int_{0}^{1}\psi _{t}\ast \varphi _{t}\ast f\frac{dt%
}{t}
\end{equation*}%
(see Section 3). Using the definition of the cubes $Q_{v,m}$, we obtain%
\begin{equation*}
f(x)=\sum\limits_{m\in \mathbb{Z}^{n}}\int_{Q_{0,m}}\Phi (x-y)\Psi
\ast
f(y)dy+\sum_{v=1}^{\infty }\sum\limits_{m\in \mathbb{Z}^{n}}%
\int_{2^{-v}}^{2^{1-v}}\int_{Q_{v,m}}\varphi _{t}(x-y)\psi _{t}\ast f(y)dy%
\frac{dt}{t}.
\end{equation*}%
We define for every $v\geq 0$, $t\in \lbrack 2^{-v},2^{1-v}]$\ and
all $m\in
\mathbb{Z}^{n}$%
\begin{equation}
\lambda _{v,m}=C_{\varphi }\Big(\int_{2^{-v}}^{2^{1-v}}\int_{Q_{v,m}}\left\vert \psi _{t}\ast f(y)\right\vert ^{2}dy\frac{dt}{t}\Big)^{1/2},
\label{Coefficient}
\end{equation}%
where%
\begin{equation*}
C_{\varphi }=\max \{\sup_{\left\vert y\right\vert \leq c}\left\vert
D^{\alpha }\varphi (y)\right\vert :\left\vert \alpha \right\vert \leq K\},%
\text{\quad }c>0.
\end{equation*}%
Define also%
\begin{equation}
\varrho _{v,m}(x)=\left\{
\begin{array}{ccc}
\displaystyle{\frac{1}{\lambda
_{v,m}}\int_{2^{-v}}^{2^{1-v}}\int_{Q_{v,m}}\varphi _{t}(x-y)\psi
_{t}\ast f(y)dy\frac{dt}{t}} & \text{if} & \lambda _{v,m}\neq 0,
\\
0 & \text{if} & \lambda _{v,m}=0.%
\end{array}%
\right.  \label{K-L-atom}
\end{equation}%
Similarly we define for every $m\in \mathbb{Z}^{n}$ the numbers
$\lambda
_{0,m}$ and the functions $\varrho _{0,m}$ taking in $\mathrm{%
\eqref{Coefficient}}$ and $\mathrm{\eqref{K-L-atom}}$ $v=0$ and replacing $%
\psi _{t}$ and $\varphi $ by $\Psi $ and $\Phi $, respectively. Let
us now
check that such $\varrho _{v,m}$ are atoms in the sense of Definition \ref%
{Atom-Def}. Note that the support and moment conditions are clear by $%
\mathrm{\eqref{Ass3}}$ and $\mathrm{\eqref{Ass4}}$, respectively. It
thus remains to check $\mathrm{\eqref{diff-cond}}$ in Definition
\ref{Atom-Def}.
We have%
\begin{eqnarray*}
&&\left\vert D_{x}^{\beta }\varrho _{v,m}(x)\right\vert \\
&\leq &\frac{1}{\lambda _{v,m}}\int_{2^{-v}}^{2^{1-v}}\Big(%
\int_{Q_{v,m}}\left\vert D_{x}^{\beta }\varphi _{t}(x-y)\right\vert ^{2}dy%
\Big)^{1/2}\Big(\int_{Q_{v,m}}\left\vert \psi _{t}\ast f(y)\right\vert ^{2}dy%
\Big)^{1/2}\frac{dt}{t} \\
&\leq &\frac{1}{\lambda _{v,m}}\Big(\int_{2^{-v}}^{2^{1-v}}\int_{Q_{v,m}}%
\left\vert D_{x}^{\beta }\varphi _{t}(x-y)\right\vert ^{2}dy\frac{dt}{t}\Big)%
^{1/2}\Big(\int_{2^{-v}}^{2^{1-v}}\int_{Q_{v,m}}\left\vert \psi
_{t}\ast
f(y)\right\vert ^{2}dy\frac{dt}{t}\Big)^{1/2} \\
&\leq &\frac{1}{C_{\varphi
}}\Big(\int_{2^{-v}}^{2^{1-v}}t^{-2(n+\left\vert
\beta \right\vert )}\int_{Q_{v,m}}\left\vert (D^{\beta }\varphi )(\frac{x-y}{%
t})\right\vert ^{2}dy\frac{dt}{t}\Big)^{1/2} \\
&\lesssim &2^{v(\left\vert \beta \right\vert +n/2)}.
\end{eqnarray*}%
The modifications for the terms with $v=0$ are obvious.\\

\textit{Step }2\textit{.} Next we show that there is a constant
$c>0$ such that $$\left\Vert \lambda \right\Vert
_{\mathbf{b}_{p(\cdot ),q(\cdot )}^{\alpha (\cdot )}}\leq
c\left\Vert f\right\Vert _{\mathbf{B}_{p(\cdot ),q(\cdot )}^{\alpha
(\cdot )}}.$$ For that reason we exploit the equivalent norms given
in Theorem \ref{loc-mean-char} involving Peetre's maximal function.
Let $v\geq 0$. Taking into account that $\left\vert x-y\right\vert
\leq c$ $2^{-v}$ for $x,y\in Q_{v,m}$ we obtain%
\begin{equation*}
t^{\alpha \left( y\right) -\alpha \left( x\right) }\lesssim
2^{v(\alpha
\left( x\right) -\alpha \left( y\right) )}\leq \frac{c_{\log }(\alpha )v}{%
\log (e+1/\left\vert x-y\right\vert )}\leq \frac{c_{\log }(\alpha
)v}{\log (e+2^{-v}/c)}\leq c,  t\in \left[ 2^{-v},2^{1-v}\right] ,
\end{equation*}%
if $v\geq \left[ \log _{2}c\right] +2$. If $0<v<\left[ \log _{2}c\right] +2$%
, then $2^{v(\alpha \left( x\right) -\alpha \left( y\right) )}\leq
2^{v(\alpha ^{-}-\alpha ^{+})}\leq c$. Hence, we see that
\begin{equation*}
t^{-\alpha \left( x\right) }\left\vert \psi _{t}\ast f(y)\right\vert \leq c%
\text{ }t^{-\alpha \left( y\right) }\left\vert \psi _{t}\ast
f(y)\right\vert
\end{equation*}%
for any $x,y\in Q_{v,m}$ any $v\in \mathbb{N}$ and any $t\in \left[
2^{-v},2^{1-v}\right] $. Hence, we deduce that
\begin{eqnarray*}
&&\sum\limits_{m\in \mathbb{Z}^{n}}\lambda _{v,m}t^{-(\alpha \left(
x\right)
+n/2)}\chi _{v,m}(x) \\
&\leq &C_{\varphi }\sum\limits_{m\in \mathbb{Z}^{n}}t^{-\alpha
\left( x\right) }\sup_{t\in \left[ 2^{-v},2^{1-v}\right] }\sup_{y\in
Q_{v,m}}\left\vert \psi _{t}\ast f(y)\right\vert \chi _{v,m}(x) \\
&\leq &c\sum\limits_{m\in \mathbb{Z}^{n}}\sup_{\left\vert
y-x\right\vert \leq c\text{ }2^{-v}}\frac{t^{-\alpha (y)}\left\vert
\psi _{t}\ast
f(y)\right\vert (1+t^{-1}\left\vert y-x\right\vert )^{a}}{%
(1+t^{-1}\left\vert y-x\right\vert )^{a}}\chi _{v,m}(x) \\
&\leq &c\text{ }\psi _{t}^{\ast ,a}t^{-\alpha \left( \cdot \right)
}f(x)\sum\limits_{m\in \mathbb{Z}^{n}}\chi _{v,m}(x)=c\text{ }\psi
_{t}^{\ast ,a}t^{-\alpha \left( \cdot \right) }f(x),
\end{eqnarray*}%
where we have used $\sum\limits_{m\in \mathbb{Z}^{n}}\chi
_{v,m}(x)=1$.
Similarly, we obtain%
\begin{equation*}
\sum\limits_{m\in \mathbb{Z}^{n}}\lambda _{0,m}\chi _{0,m}(x)\leq c\text{ }%
\Psi ^{\ast ,a}f(x),
\end{equation*}%
which implies that
\begin{equation*}
\left\Vert \lambda \right\Vert _{\mathbf{b}_{p(\cdot ),q(\cdot
)}^{\alpha (\cdot )}}\leq c\left\Vert f\right\Vert
_{\boldsymbol{B}_{p(\cdot ),q(\cdot
)}^{\alpha (\cdot )}}^{\prime }\leq c\left\Vert f\right\Vert _{\boldsymbol{B}%
_{p(\cdot ),q(\cdot )}^{\alpha (\cdot )}},
\end{equation*}%
by Theorem \ref{loc-mean-char} (with the equivalent norm $\mathrm{%
\eqref{new-norm1}}$\textrm{)}.\vskip5pt

\textit{Step }3\textit{.} Assume that $f$ can be represented by $\mathrm{%
\eqref{new-rep}}$, with $K$ and $L$ satisfying
$\mathrm{\eqref{K,L,B-F-cond}} $ and $\mathrm{\eqref{K,L,B-cond}}$,
respectively. We show that $f\in \boldsymbol{B}_{p\left( \cdot
\right) ,q\left( \cdot \right) }^{\alpha
\left( \cdot \right) }$, and that for some $c>0$, $$\left\Vert f\right\Vert _{%
\boldsymbol{B}_{p\left( \cdot \right) ,q\left( \cdot \right)
}^{\alpha
\left( \cdot \right) }}\leq c\left\Vert \lambda \right\Vert _{\boldsymbol{b}%
_{p\left( \cdot \right) ,q\left( \cdot \right) }^{\alpha \left(
\cdot \right) }}.$$ We use the equivalent norm given in
\eqref{new-norm1}. The arguments are similar to those in \cite{D6}.
We write
\begin{eqnarray*}
f &=&\sum\limits_{v=0}^{\infty }\sum\limits_{m\in
\mathbb{Z}^{n}}\lambda _{v,m}\varrho _{v,m}\\
%=\sum\limits_{m\in
%\mathbb{Z}^{n}}\lambda _{0,m}\varrho
%_{0,m}+\sum\limits_{v=1}^{\infty }\sum\limits_{m\in
%\mathbb{Z}^{n}}
%\lambda_{v,m}\varrho _{v,m} \\
&=&\sum\limits_{m\in \mathbb{Z}^{n}}\lambda _{0,m}\varrho
_{0,m}+\sum\limits_{v=1}^{j} \sum_{m\in
\mathbb{Z}^{n}}\lambda_{v,m}\varrho _{v,m}
+\sum\limits_{v=j+1}^{\infty }\sum_{m\in
\mathbb{Z}^{n}}\lambda_{v,m}\varrho _{v,m}.
\end{eqnarray*}%
From Lemmas \ref{Conv-est1} and \ref{FJ-lemma}, we have for any $M$
sufficiently large, $t\in \left[ 2^{-j},2^{1-j}\right] $ and any\
$0\leq
v\leq j$%
\begin{eqnarray*}
&&\sum\limits_{m\in \mathbb{Z}^{n}}t^{-\alpha \left( x\right)
}\left\vert \lambda _{v,m}\right\vert \left\vert \varphi _{t}\ast
\varrho
_{v,m}(x)\right\vert \\
&\lesssim &2^{(v-j)(K-\alpha ^{+})}\sum\limits_{m\in \mathbb{Z}%
^{n}}2^{v(\alpha \left( x\right) -n/2)}\left\vert \lambda
_{v,m}\right\vert
\eta _{v,M}(x-x_{Q_{v,m}}) \\
&\lesssim &2^{(v-j)(K-\alpha ^{+})}\sum\limits_{m\in \mathbb{Z}%
^{n}}2^{v(\alpha \left( x\right) +n/2)}\left\vert \lambda
_{v,m}\right\vert \eta _{v,M}\ast \chi _{v,m}(x).
\end{eqnarray*}%
Put
\[
I_{t,j}:=t^{-\frac{1}{q(t)}}\sum_{v=1}^{j}2^{(v-j)(K-\alpha
^{+})}\eta _{v,T}\ast \Big(2^{v(\alpha \left( \cdot \right)
+n/2)}\sum\limits_{m\in \mathbb{Z}^{n}}\left\vert \lambda
_{v,m}\right\vert
\chi _{v,m}\Big)\Big\|_{p(\cdot )}\chi _{\left[ 2^{-j},2^{1-j}\right] }\Big)%
_{j} .
\]
Since $$2^{v\alpha \left( \cdot \right) }\eta _{v,M}\ast \chi
_{v,m}\lesssim \eta _{v,T}\ast 2^{v\alpha \left( \cdot \right) }\chi
_{v,m}$$ by Lemma \ref{DHR-lemma} for $t=M-c_{\log }(\alpha )$, and
since
$K>\alpha^{+} $, we apply Lemma \ref{Key-lemma1} to obtain%
\begin{equation*}
\begin{split}
&\Big\| I_{t,j} \Big\|_{\ell _{>}^{q(\cdot )}(L^{q(\cdot )})} \\
\lesssim&
\Big\|\Big(t^{-\frac{1}{q(t)}}\sum_{v=0}^{j}2^{(v-j)(K-\alpha
^{+})}\Big\|2^{v(\alpha \left( \cdot \right) +n/2)}\sum\limits_{m\in \mathbb{%
Z}^{n}}\lambda _{v,m}\chi _{v,m}\Big\|_{p(\cdot )}\chi _{\left[
2^{-j},2^{1-j}\right] }\Big)_{j}\Big\|_{\ell _{>}^{q(\cdot
)}(L^{q(\cdot )})}
\\
\lesssim &\Big\|\Big(t^{-\frac{1}{q(t)}}\Big\|2^{j(\alpha \left(
\cdot
\right) +n/2)}\sum\limits_{m\in \mathbb{Z}^{n}}\lambda _{j,m}\chi _{j,m}%
\Big\|_{p(\cdot )}\chi _{\left[ 2^{-j},2^{1-j}\right]
}\Big)_{j}\Big\|_{\ell
_{>}^{q(\cdot )}(L^{q(\cdot )})}\\
\lesssim& \left\Vert \lambda \right\Vert _{%
\boldsymbol{b}_{p(\cdot ),q(\cdot )}^{\alpha (\cdot )}}.
\end{split}
\end{equation*}%
For $v=0$, we have
\begin{eqnarray*}
&&\Big\|\Big(\Big\|t^{-\frac{1}{q(t)}}2^{-j(K-\alpha ^{+})}\eta _{0,M}\ast %
\Big(\sum\limits_{m\in \mathbb{Z}^{n}}\left\vert \lambda
_{0,m}\right\vert
\chi _{0,m}\Big)\Big\|_{p(\cdot )}\chi _{\left[ 2^{-j},2^{1-j}\right] }\Big)%
_{j}\Big\|_{\ell _{>}^{q(\cdot )}(L^{q(\cdot )})} \\
&\lesssim &\Big\|\sum\limits_{m\in \mathbb{Z}^{n}}\lambda _{0,m}\chi _{0,m}%
\Big\|_{p(\cdot )}\Big\|\Big(t^{-\frac{1}{q(t)}}2^{-j(K-\alpha ^{+})}\chi _{%
\left[ 2^{-j},2^{1-j}\right] }\Big)_{j}\Big\|_{\ell _{>}^{q(\cdot
)}(L^{q(\cdot )})} \\
&\lesssim &\left\Vert \lambda \right\Vert _{\boldsymbol{b}_{p(\cdot
),q(\cdot )}^{\alpha (\cdot )}}.
\end{eqnarray*}%
Now from Lemma \ref{FJ-lemma}, we have for any $M$ sufficiently large, $t\in %
\left[ 2^{-j},2^{1-j}\right] $ and $v\geq j$%
\begin{eqnarray*}
&&\sum\limits_{m\in \mathbb{Z}^{n}}t^{-\alpha \left( x\right)
}\left\vert \lambda _{v,m}\right\vert \left\vert \varphi _{t}\ast
\varrho
_{v,m}(x)\right\vert \\
&\lesssim &2^{(j-v)(L+1+n/2)}\sum\limits_{m\in
\mathbb{Z}^{n}}2^{j(\alpha \left( x\right) -n/2)}\left\vert \lambda
_{v,m}\right\vert \eta
_{j,M}(x-x_{Q_{v,m}}) \\
&\lesssim &2^{(j-v)(L+1+n/2)}\sum\limits_{m\in
\mathbb{Z}^{n}}2^{j(\alpha \left( x\right) -n/2)}\left\vert \lambda
_{v,m}\right\vert \eta _{j,M}\ast \eta _{v,M}(x-x_{Q_{v,m}}),
\end{eqnarray*}%
where we used Lemma \ref{Conv-est2} in the last step, since $\eta
_{j,M}=\eta _{\min (v,j),M}$. Again by Lemma \ref{Conv-est1}, we have%
\begin{equation*}
\eta _{j,M}\ast \eta _{v,M}(x-x_{Q_{v,m}})\lesssim 2^{vn}\eta
_{j,M}\ast \eta _{v,M}\ast \chi _{v,m}(x).
\end{equation*}%
Therefore, we estimate as
\begin{equation*}
\begin{split}
& \sum\limits_{m\in \mathbb{Z}^{n}}t^{-\alpha \left( x\right)
}\left\vert \lambda _{v,m}\right\vert \left\vert \varphi _{t}\ast
\varrho
_{v,m}(x)\right\vert\\
\le &c\text{ }2^{(j-v)(L+1-n/2)}\sum\limits_{m\in
\mathbb{Z}^{n}}2^{j(\alpha \left( x\right) +n/2)}\left\vert \lambda
_{v,m}\right\vert \eta _{j,M}\ast
\eta _{v,M}\ast \chi _{v,m}(x) \\
\lesssim &2^{(j-v)(L+1+\alpha ^{-})}\eta _{j,T}\ast \eta _{v,T}\ast \Big[%
2^{v(\alpha \left( \cdot \right) +n/2)}\sum\limits_{m\in \mathbb{Z}%
^{n}}\left\vert \lambda _{v,m}\right\vert \chi _{v,m}\Big](x),
\end{split}
\end{equation*}%
by Lemmas \ref{DHR-lemma} and \ref{Conv-est2}, with $t=M-c_{\log
}(\alpha )$. Since the convolution with a radially decreasing
$L^{1}$-function is bounded on $L^{p(\cdot )}$, it follows that
\begin{equation*}
\Big\|\Big(t^{-\frac{1}{q(t)}}\sum_{v=j}^{\infty }2^{(j-v)H}%
\big\|\eta _{j,T}\ast \eta _{v,T}\ast 2^{v(\alpha \left( \cdot
\right)
+\frac{n}{2})}\sum\limits_{m\in \mathbb{Z}^{n}}|\lambda _{v,m}|\chi _{v,m}\big\|%
_{p(\cdot )}\chi _{\left[ 2^{-j},2^{1-j}\right]
}\Big)_{j}\Big\|_{\ell _{>}^{q(\cdot )}(L^{q(\cdot )})}
\end{equation*}%
is bounded by%
\begin{equation*}
\Big\|\Big(t^{-\frac{1}{q(t)}}\sum_{v=j}^{\infty }2^{(j-v)H}\big\|%
2^{v(\alpha \left( \cdot \right) +n/2)}\sum\limits_{m\in \mathbb{Z}%
^{n}}\lambda _{v,m}\chi _{v,m}\big\|_{p(\cdot )}\chi _{\left[ 2^{-j},2^{1-j}%
\right] }\Big)_{j}\Big\|_{\ell _{>}^{q(\cdot )}(L^{q(\cdot )})},
\end{equation*}%
where we put $H:=L+1+\alpha ^{-}$. Observing that $H>0$, an application of Lemma %
\ref{Key-lemma1} yields that the last expression is bounded by%
\begin{equation*}
c\Big\|\Big(t^{-\frac{1}{q(t)}}\big\|2^{j(\alpha \left( \cdot
\right)
+n/2)}\sum\limits_{m\in \mathbb{Z}^{n}}\lambda _{j,m}\chi _{j,m}\big\|%
_{p(\cdot )}\chi _{\left[ 2^{-j},2^{1-j}\right]
}\Big)_{j}\Big\|_{\ell
_{>}^{q(\cdot )}(L^{q(\cdot )})}\lesssim \left\Vert \lambda \right\Vert _{%
\boldsymbol{b}_{p\left( \cdot \right) ,q\left( \cdot \right)
}^{\alpha \left( \cdot \right) }}.
\end{equation*}%
Clearly, $\sum\limits_{m\in \mathbb{Z}^{n}}\left\vert \lambda
_{v,m}\right\vert \left\vert \Phi \ast \varrho _{v,m}(x)\right\vert
$ is
bounded by%
\begin{equation*}
c\text{ }2^{-v(L+1+\alpha ^{-})}\eta _{0,T}\ast \eta _{v,T}\ast \Big[%
2^{v(\alpha \left( \cdot \right) +n/2)}\sum\limits_{m\in \mathbb{Z}%
^{n}}\left\vert \lambda _{v,m}\right\vert \chi _{v,m}\Big](x).
\end{equation*}%
Taking the $L^{p\left( \cdot \right) }$-norm and the fact the
convolution with a radially decreasing $L^{1}$-function is bounded
on $L^{p(\cdot )}$,
we get%
\begin{equation*}
\big\|\Phi \ast f\big\|_{p(\cdot )}\lesssim \sum_{v=0}^{\infty
}2^{-v(L+1+\alpha ^{-})}\big\|2^{v(\alpha \left( \cdot \right)
+n/2)}\sum\limits_{m\in \mathbb{Z}^{n}}\lambda _{v,m}\chi _{v,m}\big\|%
_{p(\cdot )}\lesssim \left\Vert \lambda \right\Vert _{\boldsymbol{\tilde{b}}%
_{p(\cdot ),q(\cdot )}^{s(\cdot )}},
\end{equation*}%
{where }%
\begin{equation*}
\left\Vert \lambda \right\Vert _{\boldsymbol{\tilde{b}}_{p(\cdot
),q(\cdot )}^{\alpha (\cdot )}}=\sup_{v\geq
0}\big\|\big\|t^{-(\alpha (\cdot
)+n/2)}\sum\limits_{m\in \mathbb{Z}^{n}}\lambda _{v,m}\chi _{v,m}\big\|%
_{p(\cdot )}\big\|_{L^{q(\cdot )}([2^{-v},2^{1-v}],\frac{dt}{t})}.
\end{equation*}%
Indeed, we see from H\"{o}lder's inequality that
\begin{eqnarray*}
&&\big\|2^{v(\alpha \left( \cdot \right) +n/2)}\sum\limits_{m\in \mathbb{Z}%
^{n}}\lambda _{v,m}\chi _{v,m}\big\|_{p(\cdot )} \\
&\lesssim &\frac{1}{\log 2}\int_{2^{-v}}^{2^{1-v}}\big\|t^{-(\alpha
(\cdot
)+n/2)}\sum\limits_{m\in \mathbb{Z}^{n}}\lambda _{v,m}\chi _{v,m}\big\|%
_{p(\cdot )}\frac{dt}{t} \\
&\lesssim &\big\|\big\|t^{-(\alpha (\cdot )+n/2)}\sum\limits_{m\in \mathbb{Z}%
^{n}}\lambda _{v,m}\chi _{v,m}\big\|_{p(\cdot )}\big\|_{L^{q(\cdot
)}([2^{-v},2^{1-v}],\frac{dt}{t})}\big\|1\big\|_{L^{q^{\prime
}(\cdot
)}([2^{-v},2^{1-v}],\frac{dt}{t})} \\
&\lesssim &\left\Vert \lambda \right\Vert
_{\boldsymbol{\tilde{b}}_{p(\cdot ),q(\cdot )}^{\alpha (\cdot )}},
\end{eqnarray*}%
where $q^{\prime }(\cdot )$ is the conjugate exponent of $q(\cdot
)$. Our estimate follows from the embedding
$$\boldsymbol{b}_{p(\cdot
),q(\cdot )}^{\alpha (\cdot )}\hookrightarrow
\boldsymbol{\tilde{b}}_{p(\cdot),q(\cdot )}^{\alpha (\cdot )},$$ and
hence, the proof of Theorem \ref{atomic-dec} is complete.
\end{proof}

% \appendix

\section{Appendix}

Here we present the proof of Lemma \ref{DHHR-estimate} and the convergence
of \eqref{new-rep}.

\textbf{Proof of Lemma \ref{DHHR-estimate}}. Let $p\in \mathcal{P}^{\mathrm{%
log}}(\mathbb{R}^{n})$ with $1\leq p^{-}\leq p^{+}<\infty $, and recall that 
\begin{equation*}
p_{Q}^{-}=\underset{z\in Q}{\text{ess-inf}}\,p(z).
\end{equation*}%
Define $q\in \mathcal{P}^{\mathrm{log}}(\mathbb{R}^{n}\times \mathbb{R}^{n})$
by%
\begin{equation}
\frac{1}{q(x,y)}:=\max \left( \frac{1}{p(x)}-\frac{1}{p(y)},0\right) .
\label{EQ:q}
\end{equation}%
We split $f(y)$ into three parts: 
\begin{equation*}
f_{1}(y)=f(y)\chi _{\{z\in Q:\left\vert f(z)\right\vert >1\}}(y),
\end{equation*}%
\begin{equation*}
f_{2}(y)=f(y)\chi _{\{z\in Q:\left\vert f(z)\right\vert \leq 1,p(z)\leq
p(x)\}}(y),
\end{equation*}%
\begin{equation*}
f_{3}(y)=f(y)\chi _{\{z\in Q:\left\vert f(z)\right\vert \leq
1,p(z)>p(x)\}}(y).
\end{equation*}%
Then we can write 
\begin{eqnarray*}
\Big(\frac{\gamma _{m}}{w(Q)}\int_{Q}\left\vert f(y)\right\vert w(y)\,dy\Big)%
^{p\left( x\right) } &\leq &\max (3^{p^{+}-1},1)\sum\limits_{i=1}^{3}\Big(%
\frac{\gamma _{m}}{w(Q)}\int_{Q}\left\vert f_{i}(y)\right\vert w(y)\,dy\Big)%
^{p\left( x\right) } \\
&=&\max (3^{p^{+}-1},1)\left( I_{1}+I_{2}+I_{3}\right) .
\end{eqnarray*}

\textbf{Estimation of}\textit{\ }$I_{1}$. By H\"{o}lder's inequality, we see
that 
\begin{equation*}
I_{1}\leq \gamma _{m}^{p\left( x\right) }\Big(\frac{1}{w(Q)}%
\int_{Q}\left\vert f_{1}(y)\right\vert ^{p_{Q}^{-}}w(y)\,dy\Big)^{\frac{%
p\left( x\right) }{p_{Q}^{-}}}.
\end{equation*}%
Since $|f_{1}(y)|>1$, we have $\left\vert f_{1}(y)\right\vert
^{p_{Q}^{-}}\leq \left\vert f_{1}(y)\right\vert ^{p(y)}\leq \left\vert
f(y)\right\vert ^{p(y)}$ and thus we conclude that 
\begin{eqnarray*}
I_{1} &\leq &\gamma _{m}^{p\left( x\right) }\Big(\frac{1}{w(Q)}%
\int_{Q}\left\vert f(y)\right\vert ^{p\left( y\right) }w(y)\,dy\Big)^{\frac{%
p\left( x\right) }{p_{Q}^{-}}-1}\Big(\frac{1}{w(Q)}\int_{Q}\left\vert
f(y)\right\vert ^{p\left( y\right) }w(y)\,dy\Big) \\
&\leq &w(Q)^{1-\frac{p\left( x\right) }{p_{Q}^{-}}}\Big(\frac{1}{w(Q)}%
\int_{Q}\left\vert f(y)\right\vert ^{p\left( y\right) }w(y)\,dy\Big),
\end{eqnarray*}%
where we used the fact that 
\begin{equation*}
\int_{Q}\left\vert f(y)\right\vert ^{p\left( y\right) }w(y)\,dy\leq 1.
\end{equation*}%
Obviously, if $Q=(a,b)\subset \mathbb{R}$, with $0<a<b<\infty $, we have the
same estimate for $p_{Q}^{-}$ replaced by $p^{-}$.

\textbf{Estimation of}\textit{\ }$I_{2}$. Again by H\"{o}lder's inequality,
we see that 
\begin{equation*}
I_{2}\leq \gamma _{m}^{p\left( x\right) }\frac{1}{w(Q)}\int_{Q}\left\vert
f_{2}(y)\right\vert ^{p\left( x\right) }w(y)\,dy=:J.
\end{equation*}%
Since $|f_{2}(y)|\leq 1$, it follows that $\left\vert f_{2}(y)\right\vert
^{p(x)}\leq \left\vert f_{2}(y)\right\vert ^{p(y)}$ and hence, 
\begin{equation*}
J\leq \frac{1}{w(Q)}\int_{Q}\left\vert f(y)\right\vert ^{p(y)}w(y)\,dy,
\end{equation*}%
which is true if $Q=(a,b)\subset \mathbb{R}$, with $0<a<b<\infty $. Thus we
conclude that 
\begin{equation*}
I_{2}\leq \frac{1}{w(Q)}\int_{Q}\left\vert f(y)\right\vert ^{p(y)}w(y)\,dy.
\end{equation*}

\textbf{Estimation of}\textit{\ }$I_{3}$. Again by H\"{o}lder's inequality,
we see that 
\begin{equation}
\Big(\frac{\gamma _{m}}{w(Q)}\int_{Q}\left\vert f_{3}(y)\right\vert w(y)dy%
\Big)^{p\left( x\right) }\leq \frac{1}{w(Q)}\int_{Q}\left( \gamma
_{m}\left\vert f_{3}(y)\right\vert \right) ^{p\left( x\right) }w(y)dy.
\label{Est.I3}
\end{equation}%
Observe that%
\begin{equation*}
\frac{1}{p(x)}=\frac{1}{p(y)}+\frac{1}{p(x)}-\frac{1}{p(y)}\leq \frac{1}{p(y)%
}+\frac{1}{q(x,y)},\quad x,y\in Q,
\end{equation*}%
where $q(x,y)$ is defined in \eqref{EQ:q}. Therefore,%
\begin{equation*}
\left( \gamma _{m}\left\vert f_{3}(y)\right\vert \right) ^{p\left( x\right)
}\leq \left\vert f_{3}(y)\right\vert ^{p(y)}+\gamma _{m}^{q(x,y)},\quad
x,y\in Q,
\end{equation*}%
by using Young's inequality. Hence the right member of \eqref{Est.I3} is
dominated by%
\begin{eqnarray*}
&&\frac{1}{w(Q)}\int_{Q}\left( \left\vert f(y)\right\vert ^{p(y)}+\gamma
_{m}^{q(x,y)}\right) \chi _{\{z\in Q:\left\vert f(z)\right\vert \leq
1,p(z)>p(x)\}}(y)w(y)\,dy \\
&\leq &\frac{1}{w(Q)}\int_{Q}\left( \left\vert f(y)\right\vert
^{p(y)}+\gamma _{m}^{q(x,y)}\right) w(y)\,dy.
\end{eqnarray*}%
Now observe that 
\begin{equation*}
\frac{1}{q(x,y)}\leq \frac{1}{s(x)}+\frac{1}{s(y)},
\end{equation*}%
where $\frac{1}{s(\cdot )}=\left\vert \frac{1}{p(\cdot )}-\frac{1}{p_{\infty
}}\right\vert $. We have 
\begin{equation*}
\gamma _{m}^{q(x,y)}=\gamma _{m}^{q(x,y)/2}\gamma _{m}^{q(x,y)/2}
\end{equation*}%
and%
\begin{equation*}
\gamma _{m}^{q(x,y)/2}\leq \gamma _{m}^{s(x)/4}+\gamma _{m}^{s(y)/4},
\end{equation*}%
again by Young's inequality. We suppose that $\left\vert Q\right\vert <1$.
Since $p\in \mathcal{P}^{\mathrm{log}}(\mathbb{R}^{n})$, we have%
\begin{equation*}
\frac{1}{q(x,y)}\leq \left\vert \frac{1}{p(x)}-\frac{1}{p(y)}\right\vert
\leq \frac{c\left( 1/p\right) }{-\log \left\vert Q\right\vert }.
\end{equation*}%
Hence, 
\begin{equation*}
\gamma _{m}^{q(x,y)/2}=e^{-2mc(1/p)q(x,y)}\leq \left\vert Q\right\vert ^{m}.
\end{equation*}%
If $\left\vert Q\right\vert \geq 1$, then we use $\gamma _{m}^{q(x,y)/2}\leq
1$ which follows from $\gamma _{m}<1$. We have%
\begin{equation*}
\frac{1}{s(x)}=\left\vert \frac{1}{p(x)}-\frac{1}{p_{\infty }}\right\vert
\leq \frac{c\left( 1/p\right) }{\log (e+\left\vert x\right\vert )}.
\end{equation*}%
Hence,%
\begin{equation*}
\gamma _{m}^{s(x)/4}=e^{-mc\left( 1/p\right) s(x)}\leq (e+\left\vert
x\right\vert )^{-m},\quad x\in Q.
\end{equation*}%
Similarly, we obtain 
\begin{equation*}
\gamma _{m}^{s(y)/4}=e^{-mc\left( 1/p\right) s(y)}\leq (e+\left\vert
y\right\vert )^{-m},\quad y\in Q.
\end{equation*}%
Summarizing the estimates obtained now, we conclude the required inequality %
\eqref{EQ:Key}.

We now turn to prove \eqref{EQ:key1} with $Q=(a,b)\subset \mathbb{R}$,%
\begin{equation*}
\omega (m,b)=\min \left( b^{m},1\right) \text{,\quad }\phi (y)=\left\vert
f(y)\right\vert ^{p(y)}
\end{equation*}%
and%
\begin{equation*}
g(x,y)=\left( e+\frac{1}{x}\right) ^{-m}+\left( e+\frac{1}{y}\right) ^{-m}
\end{equation*}%
with $p\in \mathcal{P}(\mathbb{R})$ being $\log $-H\"{o}lder continuous at
the origin. Recall that%
\begin{equation*}
I_{3}(x)\leq \frac{1}{w(Q)}\int_{Q}\left( \left\vert f(y)\right\vert
^{p(y)}+\gamma _{m}^{q(x,y)}\right) w(y)dy.
\end{equation*}%
Observe that 
\begin{equation*}
\frac{1}{q(x,y)}=\max (\frac{1}{p(x)}-\frac{1}{p(y)},0)\leq \frac{1}{h(x)}+%
\frac{1}{h(y)},
\end{equation*}%
where $\frac{1}{h(\cdot )}=\left\vert \frac{1}{p(\cdot )}-\frac{1}{p(0)}%
\right\vert $ and $x,y\in Q=(a,b)$, with $0<a<b<\infty $. We have 
\begin{equation*}
\gamma _{m}^{q(x,y)}=\gamma _{m}^{q(x,y)/2}\gamma _{m}^{q(x,y)/2}\leq \gamma
_{m}^{q(x,y)/2}\left( \gamma _{m}^{h(x)/4}+\gamma _{m}^{h(y)/4}\right) ,
\end{equation*}%
again by Young's inequality. We suppose that $b<1$. Then for any $x,y\in
Q=(a,b)$, with $0<a<b<\infty $, we have%
\begin{equation*}
\frac{1}{q(x,y)}\leq \frac{1}{h(x)}+\frac{1}{h(y)}\leq \frac{c_{\log }\left(
1/p\right) }{\log (e+\frac{1}{x})}+\frac{c_{\log }\left( 1/p\right) }{\log
(e+\frac{1}{y})}\leq \frac{2c_{\log }\left( 1/p\right) }{\log (e+\frac{1}{b})%
},
\end{equation*}%
since $p\in \mathcal{P}(\mathbb{R})$ is log-H\"{o}lder continuous at the
origin. Hence,%
\begin{equation*}
\gamma _{m}^{q(x,y)/2}=e^{-2mc_{\log }(1/p)q(x,y)}\leq b^{m}.
\end{equation*}%
If $b\geq 1$, then $\gamma _{m}^{q(x,y)/2}\leq 1$, which follows again from $%
\gamma _{m}<1$. We have%
\begin{equation*}
\gamma _{m}^{h(x)/4}=e^{-\frac{mc_{\log }\left( 1/p\right) }{\left\vert 
\frac{1}{p(x)}-\frac{1}{p(0)}\right\vert }}\leq e^{-m\log (e+\frac{1}{x}%
)}=\left( e+\frac{1}{x}\right) ^{-m},\quad x\in Q.
\end{equation*}%
Similarly, we obtain%
\begin{equation*}
\gamma _{m}^{h(y)/4}\leq \left( e+\frac{1}{y}\right) ^{-m},\quad y\in Q.
\end{equation*}%
We now turn to prove \eqref{EQ:key1} with $Q=(a,b)\subset \mathbb{R}$,%
\begin{equation}
\text{\quad }\phi (y)=\left\vert f(y)\right\vert ^{p(0)}\text{,\quad }%
g(x,y)=\left( e+\frac{1}{x}\right) ^{-m}\chi _{\{z\in Q:p(z){<}p(0)\}}(x),
\label{Est-p(0)}
\end{equation}%
$\omega (m,b)=\min \left( b^{m},1\right) $\ and\ $p\in \mathcal{P}(\mathbb{R}%
)$ being $\log $-H\"{o}lder continuous at the origin. We split $f(y)$ into
two\ parts

\begin{equation*}
\begin{array}{ccc}
f_{1}(y) & = & f(y)\chi _{\{z\in Q:\left\vert f(z)\right\vert >1\}}(y) \\ 
f_{2}(y) & = & f(y)\chi _{\{z\in Q:\left\vert f(z)\right\vert \leq 1\}}(y)%
\end{array}%
\end{equation*}%
for any $y\in Q$. Then we can write 
\begin{eqnarray*}
\Big(\frac{\gamma _{m}}{w(Q)}\int_{Q}\left\vert f(y)\right\vert w(y)dy\Big)%
^{p\left( x\right) } &\leq &\max (2^{p^{+}-1})\sum\limits_{i=1}^{2}\Big(%
\frac{\gamma _{m}}{w(Q)}\int_{Q}\left\vert f_{i}(y)\right\vert w(y)dy\Big)%
^{p\left( x\right) } \\
&=&\max (2^{p^{+}-1})\left( J_{1}(x)+J_{2}(x)\right) 
\end{eqnarray*}%
for any $x\in Q=(a,b)$\ with $0<a<b<\infty $. As in the estimation of $I_{1}$
we obtain%
\begin{equation*}
J_{1}(x)\leq (w(Q))^{1-\frac{p\left( x\right) }{p^{-}}}\Big(\frac{1}{w(Q)}%
\int_{Q}\left\vert f(y)\right\vert ^{p\left( 0\right) }w(y)dy\Big).
\end{equation*}%
We write 
\begin{eqnarray*}
J_{2}(x) &=&J_{2}(x)\chi _{\{z\in Q:p(z)\geq p(0)\}}(x)+J_{2}(x)\chi
_{\{z\in Q:p(z){<}p(0)\}}(x) \\
&=&J_{3}(x)+J_{4}(x).
\end{eqnarray*}%
By H\"{o}lder's inequality, we have 
\begin{equation*}
J_{3}(x)\leq \gamma _{m}^{p\left( x\right) }\frac{1}{w(Q)}\int_{Q}\left\vert
f_{2}(y)\right\vert ^{p\left( 0\right) }w(y)dy,
\end{equation*}%
since $|f_{2}(y)|\leq 1$ and $p(x)\geq p(0)$. Again by H\"{o}lder's
inequality, we have 
\begin{equation*}
J_{4}(x)\leq \frac{1}{w(Q)}\int_{Q}\left( \gamma _{m}\left\vert
f(y)\right\vert \right) ^{p\left( x\right) }w(y)dy\chi _{\{z\in Q:p(z){<}%
p(0)\}}(x).
\end{equation*}%
Now, thanks to Young's inequality, the last term is dominated by%
\begin{equation*}
J_{4}(x)\leq \frac{1}{w(Q)}\int_{Q}\left( \left\vert f(y)\right\vert
^{p(0)}+\gamma _{m}^{\sigma (x,0)}\right) w(y)dy\chi _{\{z\in Q:p(z){<}%
p(0)\}}(x),
\end{equation*}%
where 
\begin{equation*}
\frac{1}{\sigma (x,0)}=\frac{1}{p(x)}-\frac{1}{p(0)},\quad x\in \{z\in Q:p(z)%
{<}p(0)\}.
\end{equation*}%
We have%
\begin{equation*}
\frac{1}{\sigma (x,0)}\leq \frac{c_{\log }\left( 1/p\right) }{\log (e+\frac{1%
}{x})}\leq \frac{c_{\log }\left( 1/p\right) }{\log (e+\frac{1}{b})},\quad
x\in \{z\in Q:p(z){<}p(0)\},
\end{equation*}%
since $p\in \mathcal{P}(\mathbb{R})$ is log-H\"{o}lder continuous at the
origin. Then%
\begin{equation*}
\gamma _{m}^{\sigma (x,0)/2}=e^{-2\sigma (x,0)mc_{\log }\left( 1/p\right)
}\leq (e+\frac{1}{x})^{-m},\quad x\in \{z\in Q:p(z){<}p(0)\},
\end{equation*}%
and%
\begin{equation*}
\gamma _{m}^{\sigma (x,0)/2}\leq \left\{ 
\begin{array}{ccc}
b^{m}, & \text{if} & b<1; \\ 
1, & \text{if} & b\geq 1.%
\end{array}%
\right. 
\end{equation*}%
We now turn to prove \eqref{EQ:key1} with $Q=(a,b)\subset \mathbb{R}$, 
\begin{equation*}
\omega (m,b)=1\text{\quad }\gamma _{m}=e^{-mc_{\log }},\quad \phi
(y)=\left\vert f(y)\right\vert ^{p_{\infty }}
\end{equation*}%
and%
\begin{equation*}
g(x,y)=(e+x)^{-m}\chi _{\{z\in Q:p(z){<}p_{\infty }\}}(x)
\end{equation*}%
with $p\in \mathcal{P}(\mathbb{R})$ satisfying the $\log $-H\"{o}lder decay
condition. We employ the same notation as in the proof of \eqref{Est-p(0)}.
We need only to estimate $J_{2}$. We write 
\begin{eqnarray*}
J_{2}(x) &=&J_{2}(x)\chi _{\{z\in Q:p(z)\geq p_{\infty }\}}(x)+J_{2}(x)\chi
_{\{z\in Q:p(z){<}p_{\infty }\}}(x) \\
&=&J_{5}(x)+J_{6}(x).
\end{eqnarray*}%
By H\"{o}lder's inequality, we have 
\begin{equation*}
J_{5}(x)\leq \gamma _{m}^{p\left( x\right) }\frac{1}{w(Q)}\int_{Q}\left\vert
f_{2}(y)\right\vert ^{p_{\infty }}w(y)dy,
\end{equation*}%
since $|f_{2}(y)|\leq 1$ and $p(x)\geq p_{\infty }$. Again by H\"{o}lder's
inequality, we have 
\begin{equation*}
J_{6}(x)\leq \frac{1}{w(Q)}\int_{Q}\left( \gamma _{m}\left\vert
f(y)\right\vert \right) ^{p\left( x\right) }w(y)dy\chi _{\{z\in Q:p(z){<}%
p_{\infty }\}}(x).
\end{equation*}%
Again by Young's inequality, the last term is dominated by%
\begin{equation*}
J_{6}(x)\leq \frac{1}{w(Q)}\int_{Q}\left( \left\vert f(y)\right\vert
^{p_{\infty }}+\gamma _{m}^{\sigma (x,p_{\infty })}\right) w(y)dy\chi
_{\{z\in Q:p(z){<}p_{\infty }\}}(x),
\end{equation*}%
where 
\begin{equation*}
\frac{1}{\sigma (x,p_{\infty })}=\frac{1}{p(x)}-\frac{1}{p_{\infty }},\quad
x\in \{z\in Q:p(z){<}p_{\infty }\}.
\end{equation*}%
We have%
\begin{equation*}
\frac{1}{\sigma (x,p_{\infty })}\leq \frac{c_{\log }}{\log (e+x)},\quad x\in
\{z\in Q:p(z){<}p_{\infty }\},
\end{equation*}%
since $p\in \mathcal{P}(\mathbb{R})$ satisfies the log-H\"{o}lder decay
condition. Then%
\begin{eqnarray*}
\gamma _{m}^{\sigma (x,p_{\infty })} &=&e^{-\sigma (x,p_{\infty })mc_{\log }}
\\
&\leq &(e+x)^{-m},\quad x\in \{z\in Q:p(z){<}p_{\infty }\}.
\end{eqnarray*}%
Hence the lemma is proved.

\textbf{Proof of the convergence of \eqref{new-rep}}. As a by-product of the
previous proof, we can prove the convergence of \eqref{new-rep} by employing
the same method as in \cite[Theorem 4.3]{D6}.

Let $\varphi \in \mathcal{S(}\mathbb{R}^{n})$. By \eqref{supp-cond}, %
\eqref{diff-cond} and \eqref{mom-cond}, and the Taylor expansion of $\varphi 
$ up to order $L$\ with respect to the off-points $x_{Q_{v,m}}$, we obtain
for fixed $v$%
\begin{eqnarray*}
&&\int_{\mathbb{R}^{n}}\sum\limits_{m\in \mathbb{Z}^{n}}\lambda
_{v,m}\varrho _{v,m}(y)\varphi (y)\, dy \\
&=&\int_{\mathbb{R}^{n}}\sum\limits_{m\in \mathbb{Z}^{n}}\lambda
_{v,m}\varrho _{v,m}(y)\Big(\varphi (y)-\sum\limits_{\left\vert \beta
\right\vert \leq L}(y-x_{Q_{v,m}})^{\beta }\frac{D^{\alpha }\varphi
(x_{Q_{v,m}})}{\beta !}\Big)\, dy.
\end{eqnarray*}%
The last factor in the integral can be uniformly estimated from the above by%
\begin{equation*}
c\text{ }2^{-v(L+1)}(1+\left\vert y\right\vert ^{2})^{-M/2}\sup_{x\in 
\mathbb{R}^{n}}(1+\left\vert x\right\vert ^{2})^{M/2}\sum\limits_{\left\vert
\beta \right\vert \leq L+1}\left\vert D^{\alpha }\varphi (x)\right\vert ,
\end{equation*}%
where $M>0$ is at our disposal. Let $0<t<1$ and ${s}(x)=\alpha (x)+\frac{n}{%
p(x)}(t-1)$ be such that $L+1>-s(\cdot )$ $>-\alpha (\cdot )$. Since $%
\varrho _{v,m}$ are $\left[ K,L\right] $-atoms, then for every $S>0$, we
have 
\begin{equation*}
\left\vert \varrho _{v,m}(y)\right\vert \leq c2^{vn/2}\left(
1+2^{v}\left\vert y-x_{Q_{v,m}}\right\vert \right) ^{-S}.
\end{equation*}
Therefore, we find that 
\begin{eqnarray*}
&&\Big|\int_{\mathbb{R}^{n}}\sum\limits_{m\in \mathbb{Z}^{n}}\lambda
_{v,m}\varrho _{v,m}(y)\varphi (y)dy\Big| \\
&\leq &c\text{ }2^{-v(L+1)}\int_{\mathbb{R}^{n}}\sum\limits_{m\in \mathbb{Z}%
^{n}}2^{vn/2}\left\vert \lambda _{v,m}\right\vert \frac{(1+\left\vert
y\right\vert ^{2})^{-M/2}}{\left( 1+2^{v}\left\vert y-x_{Q_{v,m}}\right\vert
\right) ^{S}}\, dy.
\end{eqnarray*}%
Applying Lemma \ref{Conv-est1}, we obtain%
\begin{equation*}
\sum\limits_{m\in \mathbb{Z}^{n}}\left\vert \lambda _{v,m}\right\vert \left(
1+2^{v}\left\vert y-x_{Q_{v,m}}\right\vert \right) ^{-S}\lesssim
\sum\limits_{m\in \mathbb{Z}^{n}}\left\vert \lambda _{v,m}\right\vert \eta
_{v,S}\ast \chi _{v,m}(y).
\end{equation*}%
We split $M$ into $R+S$. Since we have in addition the factor $(1+\left\vert
y\right\vert ^{2})^{-S/2}$, H\"{o}lder's inequality and $(1+\left\vert
y\right\vert ^{2})^{-R/2}\lesssim (1+\left\vert h\right\vert ^{2})^{-R/2}$
give that the term $\left\vert \int_{\mathbb{R}^{n}}\cdot \cdot \cdot
dy\right\vert $ is dominated by%
\begin{eqnarray*}
&&c\ 2^{-v(L+1)}\sum\limits_{h\in \mathbb{Z}^{n}}(1+\left\vert h\right\vert
^{2})^{-R/2}\Big\|\eta _{v,S}\ast \Big(\sum\limits_{m\in \mathbb{Z}%
^{n}}2^{vn/2}\left\vert \lambda _{v,m}\right\vert \chi _{v,m}\Big)\Big\|%
_{p(\cdot )/t} \\
&\lesssim &c\ 2^{-v(L+1+s(x))}\sup_{j\geq 0}\Big\|2^{(s(\cdot
)+n/2)j}\sum\limits_{m\in \mathbb{Z}^{n}}\lambda _{j,m}\chi _{j,m}\Big\|%
_{p(\cdot )/t} \\
&=&c\ 2^{-v(L+1+s(x))}\sup_{j\geq 0}g_{j}
\end{eqnarray*}%
for some $x\in \mathbb{R}^{n}$ and {by taking $R$ large enough}. We see from
H\"{o}lder's inequality that 
\begin{eqnarray*}
g_{j} &=&\frac{1}{\log 2}\int_{2^{-j}}^{2^{1-j}}g_{j}\frac{d\tau }{\tau }%
\leq \frac{1}{\log 2}\int_{2^{-j}}^{2^{1-j}}\big\|\tau ^{-(s(\cdot
)+n/2)j}\sum\limits_{m\in \mathbb{Z}^{n}}\lambda _{j,m}\chi _{j,m}\big\|%
_{p(\cdot )/t}\frac{d\tau }{\tau } \\
&\lesssim &\Big\|\big\|\tau ^{-(s(\cdot )+n/2)j}\sum\limits_{m\in \mathbb{Z}%
^{n}}\lambda _{j,m}\chi _{j,m}\big\|_{p(\cdot )/t}\Big\|_{L^{q(\cdot
)}([2^{-j},2^{1-j}],\frac{d\tau }{\tau })}\big\|1\big\|_{L^{q^{\prime
}(\cdot )}([2^{-j},2^{1-j}],\frac{d\tau }{\tau })}, \\
&\lesssim &\left\Vert \lambda \right\Vert _{\boldsymbol{\tilde{b}}_{p(\cdot
)/t,q(\cdot )}^{s(\cdot )}}.
\end{eqnarray*}%
{The convergence of $\mathrm{\eqref{new-rep}}$ is now clear from the fact
that }$L+1+s(x)>0$ and the embedding 
\begin{equation*}
\boldsymbol{b}_{p(\cdot ),q(\cdot )}^{\alpha (\cdot)} \hookrightarrow 
\boldsymbol{b}_{p(\cdot)/t,q(\cdot )}^{s(\cdot )} \hookrightarrow 
\boldsymbol{\tilde{b}}_{p(\cdot )/t,q(\cdot )}^{s(\cdot )}.
\end{equation*}
The proof is complete.

\begin{ack}
The author would like to thank  Tokio Matsuyama   for the valuable
comments and suggestions.
\end{ack}

Douadi Drihem

M'sila University, Department of Mathematics,

Laboratory of Functional Analysis and Geometry of Spaces,

P.O. Box 166, M'sila 28000, Algeria,

e-mail: \texttt{\ douadidr@yahoo.fr}

\end{document}